\newtheorem{theorem}{Theorem}[section]
\newtheorem{lemma}[theorem]{Lemma}
\newtheorem{proposition}[theorem]{Proposition}
\newtheorem{corollary}[theorem]{Corollary}
\theoremstyle{definition}
\newtheorem{remark}[theorem]{Remark}
\numberwithin{equation}{section}
\newcommand{\RNum}[1]{\uppercase\expandafter{\romannumeral #1\relax}}
\newtheorem*{defin}{Definition}
\def\Xint#1{\mathchoice
	{\XXint\displaystyle\textstyle{#1}}%
	{\XXint\textstyle\scriptstyle{#1}}%
	{\XXint\scriptstyle\scriptscriptstyle{#1}}%
	{\XXint\scriptscriptstyle\scriptscriptstyle{#1}}%
	\!\int}
\def\XXint#1#2#3{{\setbox0=\hbox{$#1{#2#3}{\int}$ }
		\vcenter{\hbox{$#2#3$ }}\kern-.6\wd0}}
\def\dashint{\Xint-}
\begin{document}

\title[Higher integrability for nonlocal equations]{Higher integrability for nonlinear nonlocal equations with irregular kernel}

%    Information for first author
\author{Simon Nowak}
%    Address of record for the research reported here
\address{Universit\"at Bielefeld, Fakult\"at f\"ur Mathematik, Postfach 100131, D-33501 Bielefeld, Germany}
\email{simon.nowak@uni-bielefeld.de}
%    \thanks will become a 1st page footnote.
\thanks{Supported by SFB 1283 of the German Research Foundation.}

\subjclass[2010]{35R09, 35B65, 35D30, 46E35, 47G20}

\keywords{Nonlocal equations, Sobolev regularity, Dirichlet problem}

\begin{abstract}
	We prove a higher regularity result for weak solutions to nonlinear nonlocal equations along the integrability scale of Bessel potential spaces $H^{s,p}$ under a mild continuity assumption on the kernel. By embedding, this also yields regularity in Sobolev-Slobodeckij spaces $W^{s,p}$. Our approach is based on a characterization of Bessel potential spaces in terms of a certain nonlocal gradient-type operator and a perturbation approach commonly used in the context of local elliptic equations in divergence form.
\end{abstract}

\maketitle

\section{Introduction} 
\subsection{Basic setting and main result}
In this paper, we consider nonlinear nonlocal equations of the form 
\begin{equation} \label{nonlocaleq}
L_A^\Phi u = F \text{ in } \Omega \subset \mathbb{R}^n,
\end{equation}
where $s \in (0,1)$, $\Omega \subset \mathbb{R}^n$ is a domain (= open set), while $A:\mathbb{R}^n \times \mathbb{R}^n \to \mathbb{R}$ is a coefficient and $\Phi:\mathbb{R} \to \mathbb{R}$ is a nonlinearity with properties to be specified below. Moreover, the nonlocal operator $L_A^\Phi$ is formally given by
$$ L_A^\Phi u(x) = p.v. \int_{\mathbb{R}^n} \frac{A(x,y)}{|x-y|^{n+2s}} \Phi(u(x)-u(y))dy.$$
We assume that the right-hand side $F$ of (\ref{nonlocaleq}) is formally of the form
\begin{equation} \label{F}
F(x) = p.v. \int_{\mathbb{R}^n} \frac{g(x,y)}{|x-y|^{n+2s}} dy + f(x), \quad x \in \Omega,
\end{equation}
where $f:\mathbb{R}^n \to \mathbb{R}$ and $g:\mathbb{R}^n \times \mathbb{R}^n \to \mathbb{R}$ are given functions.
The aim of this work is to generalize an approach introduced in \cite{Me}, in order to prove a higher regularity result for weak solutions of the equation (\ref{nonlocaleq}) along the integrability scale of Bessel potential spaces $H^{s,p}$, in the case when the coefficient $A$ exhibits a potentially very irregular behaviour. Throughout the paper, for simplicity we assume that $n>2s$. Moreover, we assume that $A$ is a measurable function and that there exists some $\lambda \geq 1$ such that
\begin{equation} \label{eq1}
\lambda^{-1} \leq A(x,y) \leq \lambda \text{ for almost all } x,y \in \mathbb{R}^n.
\end{equation}
Furthermore, we require $A$ to be symmetric, i.e.
\begin{equation} \label{symmetry}
A(x,y)=A(y,x) \text{ for almost all } x,y \in \mathbb{R}^n.
\end{equation}
We call such a function $A$ a kernel coefficient and define $\mathcal{L}_0(\lambda)$ as the class of all such measurable kernel coefficients $A$ that satisfy (\ref{eq1}) and (\ref{symmetry}).
Moreover, in our main results $\Phi:\mathbb{R} \to \mathbb{R}$ is assumed to be a continuous function satisfying $\Phi(0)=0$ and the following Lipschitz continuity and monotonicity assumptions, namely
\begin{equation} \label{PhiLipschitz}
|\Phi(t)-\Phi(t^\prime)| \leq \lambda |t-t^\prime| \text{ for all } t,t^\prime \in \mathbb{R}
\end{equation}
and
\begin{equation} \label{PhiMonotone}
\left (\Phi(t)-\Phi(t^\prime) \right )(t-t^\prime) \geq \lambda^{-1} (t-t^\prime)^2 \text{ for all } t,t^\prime \in \mathbb{R},
\end{equation}
where for simplicity we use the same constant $\lambda \geq 1$ as in (\ref{eq1}). In particular, $\Phi$ could be any $C^1$ function with $\Phi(0)=0$ such that the first derivative $\Phi^\prime$ of $\Phi$ satisfies $\textnormal{im } \Phi^\prime \subset [\lambda^{-1},\lambda]$. In the case when $\Phi(t)=t$, the operator $L_A^\Phi$ reduces to a linear nonlocal operator widely considered in the literature. \newline
The following nonlocal analogue of the euclidean norm of the gradient of a function plays a key role in this paper.
\begin{defin}
	Let $s \in (0,1)$. For any measurable function $u:\Omega \to \mathbb{R}$, we define the s-gradient $\nabla^s u:\mathbb{R}^n \to [0,\infty]$ by 
	$$ \nabla^s u(x)= \left ( \int_{\mathbb{R}^n} \frac{(u(x)-u(y))^2}{|x-y|^{n+2s}}dy \right )^{\frac{1}{2}}.$$
\end{defin}
For any $p \in [2,\infty)$, define the space
$$H^{s,p}(\Omega | \mathbb{R}^n)= \left \{u:\mathbb{R}^n \to \mathbb{R} \text{ measurable } \mathrel{\Big|} \int_{\Omega}|u(x)|^pdx+ \int_{\Omega} |\nabla^s u(x)|^p dx < \infty \right \}.$$
Moreover, by $H^{s,p}_{loc}(\Omega|\mathbb{R}^n)$ we denote the set of all functions $u:\mathbb{R}^n \to \mathbb{R}$ that belong to $H^{s,p}(\Omega^\prime|\mathbb{R}^n)$ for any relatively compact open subset $\Omega^\prime$ of $\Omega$.
The main relevance of these spaces is due to the fact that they are closely related to the classical Bessel potential spaces $H^{s,p}(\Omega)$ and Sobolev-Slobodeckij spaces $W^{s,p}(\Omega)$. In fact, for any $p \geq 2$ we have the inclusions
\begin{equation} \label{relationsSob}
H^{s,p}(\mathbb{R}^n) \subset H^{s,p}_{loc}(\Omega|\mathbb{R}^n) \subset H^{s,p}_{loc}(\Omega) \subset W^{s,p}_{loc}(\Omega),
\end{equation}
see section 3. \newline
Denote by $H_c^{s,2}(\Omega)$ the set of all functions that belong to $H^{s,2}(\Omega|\mathbb{R}^n)$ and are compactly supported in $\Omega$.
For all measurable functions $u,\varphi:\mathbb{R}^n \to \mathbb{R}$, we define
$$\mathcal{E}_A^\Phi(u,\varphi) = \int_{\mathbb{R}^n} \int_{\mathbb{R}^n} \frac{A(x,y)}{|x-y|^{n+2s}} \Phi(u(x)-u(y))(\varphi(x)-\varphi(y))dydx,$$
provided the above expression is well-defined and finite, this is for example true if $u \in H^{s,2}_{loc}(\Omega | \mathbb{R}^n)$ and $\varphi \in H_c^{s,2}(\Omega).$
Furthermore, throughout this paper we assume that the function $g$ is measurable and symmetric in the sense of (\ref{symmetry}). In addition, by a slight abuse of notation we define the $s$-gradient $\nabla^s g:\mathbb{R}^n \to [0,\infty]$ of $g$ by 
$$ \nabla^s g(x)= \left ( \int_{\mathbb{R}^n} \frac{g(x,y)^2}{|x-y|^{n+2s}}dy \right )^{\frac{1}{2}}.$$
Also, for any such function $g$ that satisfies $\nabla^s g \in L^2_{loc}(\Omega)$ and any $\varphi \in H_c^{s,2}(\Omega)$, we define 
$$\mathcal{E}(g,\varphi) = \int_{\mathbb{R}^n} \int_{\mathbb{R}^n} \frac{g(x,y)}{|x-y|^{n+2s}}(\varphi(x)-\varphi(y))dydx.$$
The notation introduced above allows us to define our notion of weak solutions to the equation (\ref{nonlocaleq}) as follows.
\begin{defin}
	Given $f \in L^\frac{2n}{n+2s}_{loc}(\Omega)$ and a measurable symmetric function $g:\mathbb{R}^n \times \mathbb{R}^n \to \mathbb{R}$ with $\nabla^s g \in L^2_{loc}(\Omega)$, assume that $F$ is given as in (\ref{F}). We say that $u \in H^{s,2}_{loc}(\Omega | \mathbb{R}^n)$ is a weak solution of the equation $L_A^\Phi u = F$ in $\Omega$, if 
	$$ \mathcal{E}_A^\Phi(u,\varphi) = \mathcal{E}(g,\varphi) + (f,\varphi)_{L^2(\Omega)} \quad \forall \varphi \in H^{s,2}_c(\Omega).$$
\end{defin}
In our main result, we need to impose the following additional continuity assumption on $A$
\begin{equation} \label{contkernel}
\lim_{h \to 0} \sup_{x,y \in K} |A(x+h,y+h)-A(x,y)| =0 \quad \text{for any compact set } K \subset \Omega.
\end{equation}
The condition (\ref{contkernel}) was introduced in the recent paper \cite{MeH} in the context of obtaining higher H\"older regularity. In particular, it is satisfied if $A$ is either continuous in $\Omega \times \Omega$ or if $A$ is translation invariant inside of $\Omega$, that is, if there exists a measurable function $a: \mathbb{R}^n \to \mathbb{R}$ such that $A(x,y)=a(x-y)$ for all $x,y \in \Omega$.
In addition, the condition (\ref{contkernel}) is also satisfied by some more general choices of kernel coefficients, for example if
$$A(x,y)=A^\prime(x,y)A_0(x,y),$$
where $A^\prime \in \mathcal{L}_0(\lambda^\frac{1}{2})$ is continuous in $\Omega \times \Omega$ and $A_0 \in \mathcal{L}_0(\lambda^\frac{1}{2})$ is translation invariant inside of $\Omega$, but is not required to satisfy any continuity or smoothness assumption. Furthermore, we stress that the condition (\ref{contkernel}) only restricts the behaviour of $A$ inside of $\Omega \times \Omega$, while outside of $\Omega \times \Omega$ a more general behaviour is possible. \newline
We are now in the position to state our main result.
\begin{theorem} \label{mainint5}
	Let $\Omega \subset \mathbb{R}^n$ be a domain, $s \in (0,1)$, $\lambda \geq 1$ and $p \in (2,\infty)$. Moreover, let $g:\mathbb{R}^n \times \mathbb{R}^n \to \mathbb{R}$ be a measurable symmetric function with $\nabla^s g \in L^p_{loc}(\Omega)$ and assume that $f \in L^{p_\star}_{loc}(\Omega)$, where $p_\star=\max \left \{\frac{np}{n+sp},2 \right \}$. 
	If $A \in \mathcal{L}_0(\lambda)$ satisfies the condition (\ref{contkernel}) and if $\Phi$ satisfies the conditions (\ref{PhiLipschitz}) and (\ref{PhiMonotone}) with respect to $\lambda$, then for $F$ given as in $(\ref{F})$ and any weak solution $u \in H^{s,2}_{loc}(\Omega | \mathbb{R}^n)$ 
	of the equation
	$$
	L_A^\Phi u =  F \text{ in } \Omega,
	$$
	we have $u \in H^{s,p}_{loc}(\Omega|\mathbb{R}^n)$. \newline
	Moreover, for all open sets $U \Subset V \Subset \Omega$, we have
	\begin{equation} \label{Sobest}
	||\nabla^s u||_{L^p(U)} \leq C \left (||f||_{L^{p_\star}(V)}  + ||\nabla^s g||_{L^p(V)} + ||\nabla^s u||_{L^2(V)} \right ),
	\end{equation}
	where $C=C(p,n,s,\lambda,U,V)>0$.
\end{theorem}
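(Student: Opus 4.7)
The plan is to mimic the perturbation scheme used in the $L^p$-theory of local divergence-form equations with continuous coefficients: on small balls I freeze the coefficient $A$ to a translation-invariant kernel $A_0$, invoke the already-known $H^{s,p}$-regularity for the frozen equation, and transport this regularity back to $u$ via a comparison estimate. The characterization of $H^{s,p}_{loc}(\Omega)$ via $\nabla^s u \in L^p_{loc}(\Omega)$ underlying (\ref{relationsSob}) takes the place of the classical gradient and allows the iteration to be run in a purely local fashion at the level of $\nabla^s u$.

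\textbf{Freezing and comparison.} Given $B_{4r}(x_0) \Subset \Omega$, condition (\ref{contkernel}) provides, for any prescribed $\varepsilon > 0$ and any sufficiently small $r = r(\varepsilon)$, a kernel $A_0 \in \mathcal{L}_0(\lambda)$ which is translation invariant inside $B_{4r}(x_0)$, agrees with $A$ outside, and satisfies $|A - A_0| \leq \varepsilon$ on $B_{2r}(x_0) \times B_{2r}(x_0)$ (for instance, $A_0$ can be obtained by averaging $A(\cdot+h,\cdot+h)$ against a cut-off in $h$ and then splicing). Let $v$ solve
\[
\mathcal{E}_{A_0}^\Phi(v,\varphi) = \mathcal{E}(g,\varphi) + (f,\varphi)_{L^2(\Omega)} \quad \forall \varphi \in H^{s,2}_c(B_{2r}(x_0)),
\]
with $v = u$ on $\mathbb{R}^n \setminus B_{2r}(x_0)$; existence and uniqueness follow from monotone operator theory under (\ref{PhiLipschitz})--(\ref{PhiMonotone}). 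Testing the difference of the equations for $u$ and $v$ with $\varphi = u - v \in H^{s,2}_c(B_{2r}(x_0))$, applying (\ref{PhiMonotone}) to the frozen bilinear part and (\ref{PhiLipschitz}) together with Cauchy--Schwarz to the error term coming from $A - A_0$, I expect
\[
\|\nabla^s(u-v)\|_{L^2(\mathbb{R}^n)} \leq C \varepsilon \, \|\nabla^s u\|_{L^2(B_{4r}(x_0))} + \text{data terms involving } f, g.
\]

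\textbf{Higher integrability for $v$ and iteration.} Since $A_0$ is translation invariant inside $B_{2r}(x_0)$, the higher-integrability theorem of \cite{Me} (suitably extended to the present nonlinear setting using only the structural hypotheses (\ref{PhiLipschitz})--(\ref{PhiMonotone}) on $\Phi$) supplies a reverse H\"older bound
\[
\bigl(\dashint_{B_{r}(x_0)} |\nabla^s v|^p\bigr)^{1/p} \leq C \bigl(\dashint_{B_{2r}(x_0)} |\nabla^s v|^2\bigr)^{1/2} + \text{data}.
\]
Combining this with the comparison estimate yields, on every sufficiently small ball,
\[
\bigl(\dashint_{B_{r}} |\nabla^s u|^p\bigr)^{1/p} \leq C \bigl(\dashint_{B_{2r}} |\nabla^s u|^2\bigr)^{1/2} + C\varepsilon \bigl(\dashint_{B_{2r}} |\nabla^s u|^p\bigr)^{1/p} + \text{data}.
\]
A good-level-set/Calder\'on--Zygmund covering iteration on the super-level sets of the Hardy--Littlewood maximal function of $|\nabla^s u|^2$, in the spirit of Caffarelli--Peral, with $\varepsilon$ chosen small enough to absorb the last term, then upgrades $\nabla^s u$ from $L^2_{loc}$ to $L^p_{loc}$ and delivers the quantitative bound (\ref{Sobest}).

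\textbf{Main obstacle.} The delicate point is the nonlocal ``tail'': outside $B_{4r}(x_0)$ the kernel $A$ is not required to be close to $A_0$ and $u$ is only assumed to lie in $H^{s,2}$, so the cross integral $\int\!\!\int (A-A_0)\Phi(u(x)-u(y))(\varphi(x)-\varphi(y))/|x-y|^{n+2s}$ could in principle pick up large long-range contributions. One has to arrange the definition of $A_0$ and exploit that $\varphi = u-v$ is supported in $B_{2r}(x_0)$ so that, after symmetrization, each interaction involves at least one point in $B_{2r}(x_0)$ and is therefore controlled by $\varepsilon \, \nabla^s u$ rather than by $\|A-A_0\|_{L^\infty(\mathbb{R}^n \times \mathbb{R}^n)}$. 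Making this tail analysis compatible with the scale-invariant local $L^p$-iteration, while relying only on the localized continuity condition (\ref{contkernel}), is the principal technical hurdle.
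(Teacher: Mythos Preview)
Your outline contains a genuine gap, and the paper's route differs from yours in an essential way.

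\textbf{The gap.} Your reverse H\"older step for $v$ rests on applying \cite{Me} to the frozen, translation-invariant kernel $A_0$ ``suitably extended to the present nonlinear setting''. But \cite{Me} treats only $\Phi(t)=t$; no such nonlinear extension is available as input, and proving it would essentially reproduce the content of Theorem~\ref{mainint5}. There is a second, related problem: \cite{Me} requires the kernel to be translation invariant on all of $\mathbb{R}^n$, whereas your spliced $A_0$ is translation invariant only inside $B_{4r}(x_0)$. If you drop the splicing and make $A_0$ globally translation invariant, then the tail term you flag is no longer negligible, since $A-A_0$ is not small off-diagonal and the test function $u-v$ interacts with the complement of the ball through $\nabla^s$. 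Finally, even granting an $L^p$ bound for $\nabla^s v$, your displayed inequality already features $\|\nabla^s u\|_{L^p}$ on the right-hand side, although a~priori you only know $\nabla^s u\in L^2$; turning that bootstrap into a rigorous argument is not automatic.

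\textbf{What the paper actually does.} The paper avoids freezing entirely. The comparison function $v$ solves the \emph{homogeneous} equation $L_A^\Phi v=0$ in $B_5$ with the \emph{original} kernel $A$ and boundary values $v=u$ (Lemma~\ref{apppl}). Because $A$ itself satisfies \eqref{contkernel}, the $C^{s+\gamma}$ H\"older estimate of \cite{MeH} (Proposition~\ref{modC2sreg} here) applies directly to $v$ and, together with the tail estimate of Proposition~\ref{taillinf}, yields the \emph{pointwise} bound $\nabla^s v\in L^\infty(B_2)$. This $L^\infty$ control---not an $L^p$ bound---is the input to the Caffarelli--Peral level-set iteration (Lemmas~\ref{mfuse}--\ref{aplvi}), and only the $L^2$ smallness of $\nabla^s(u-v)$, which follows from the data being small, is needed. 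No freezing, no circular appeal to a nonlinear version of \cite{Me}, and no tail mismatch, since the kernel is never changed. The passage from $f\in L^{p_\star}$ to $f\in L^p$ is handled afterward by solving $(-\Delta)^s h+h=f$ globally and absorbing $h$ into the $g$-part.
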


\begin{remark} \normalfont
	In view of (\ref{relationsSob}), under the assumptions of Theorem \ref{mainint5}, weak solutions of (\ref{nonlocaleq}) in particular belong to the Bessel potential space $H^{s,p}_{loc}(\Omega)$ and also to the Sobolev-Slobodeckij space $W^{s,p}_{loc}(\Omega)$.
	Moreover, the condition $\nabla^s g \in L^p_{loc}(\Omega)$ is for example satisfied if $g$ has the form 
	\begin{equation} \label{or}
	g(x,y) = \sum_{i=1}^m D_i(x,y)(g_i(x)-g_i(y)),
	\end{equation}
	where $m \in \mathbb{N}$, $D_i \in L^\infty(\mathbb{R}^n \times \mathbb{R}^n)$ and $g_i \in H^{s,p}_{loc}(\Omega|\mathbb{R}^n)$ for all $i=1,...,m$. By (\ref{relationsSob}), the latter condition is in particular satisfied if all $g_i$ belong to the Bessel potential space $H^{s,p}(\mathbb{R}^n)$.
\end{remark}

\begin{remark} \normalfont
	An interesting feature of the estimate (\ref{Sobest}) is that it is not a purely local estimate, in the sense that due to the nonlocal nature of the s-gradient $\nabla^s$, the left-hand side also depends on the values of $u$ outside the domain $\Omega$. In other words, we also gain some control on $u$ outside the domain where the equation holds.
\end{remark}

For the sake of providing some context, let us briefly consider local elliptic equations in divergence form of the type
\begin{equation} \label{localeq}
\textnormal{div}(B \nabla u)=\textnormal{div} h + f \quad \text{in } \Omega,
\end{equation}
where the matrix of coefficients $B=\{b_{ij}\}_{i,j=1}^n$ is assumed to be uniformly elliptic and bounded, while $h:\mathbb{R}^n \to \mathbb{R}^n$ and $f:\mathbb{R}^n \to \mathbb{R}$ are given functions. The equation (\ref{localeq}) can in some sense be thought of as a local analogue of the nonlocal equation (\ref{nonlocaleq}) corresponding to the limit case $s=1$. It is known that if the coefficients $b_{ij}$ are continuous and $h \in L^p_{loc}(\Omega,\mathbb{R}^n)$, $f \in L^\frac{np}{n+p}_{loc}(\Omega)$ for some $p>2$, then weak solutions $u \in W^{1,2}_{loc}(\Omega)$ of the equation (\ref{localeq}) belong to $W^{1,p}_{loc}(\Omega)$. This corresponds to our main result in the sense that we obtain local $W^{s,p}$ regularity for nonlocal equations of the type (\ref{nonlocaleq}) in the case when $A$ satisfies the continuity assumption (\ref{contkernel}). We note that this $W^{1,p}_{loc}(\Omega)$ regularity for solutions of the equation (\ref{localeq}) also holds if more generally the coefficients $b_{ij}$ belong to the space VMO of functions with vanishing mean oscillation, cf. \cite{ByunLp} or \cite{AM}. Therefore, an interesting question is if the conclusion of Theorem \ref{mainint5} remains true for kernel coefficients $A$ that belong to VMO in a suitable sense. \newline
Regarding related previous results, in \cite{Me} Theorem \ref{mainint5} was proved in the linear case when $\Phi(t)=t$ and under the stronger assumption that $A$ is translation invariant in the whole space $\mathbb{R}^n$ and in the special case when $g$ is of the form (\ref{or}). Another very interesting result in this direction was recently proved in \cite{MSY}, where again in the linear case when $\Phi(t)=t$ it was in particular shown that if $A \in \mathcal{L}_0(\lambda)$ is H\"older continuous with some arbitrary H\"older exponent and for some $2 \leq p<\infty$ we have $f \in L^p(\mathbb{R}^n)$, then weak solutions $u \in H^{s,2}(\mathbb{R}^n)$ of the equation $L_A^\Phi u= f$ in $\mathbb{R}^n$ belong to $H^{\alpha,p}_{loc}(\mathbb{R}^n)$ for any $\alpha<\min \big \{2s,1 \big\}$, gaining not only integrability, but also differentiability, while for local equations of the type (\ref{localeq}) no comparable gain of differentiability is attainable. Another interesting question is therefore if such a gain of differentiability is also achievable for possibly nonlinear equations of the type (\ref{nonlocaleq}) that might only hold in some domain $\Omega$ with kernel coefficients that satisfy the condition (\ref{contkernel}) or even for kernels of VMO-type. We plan to investigate this direction in the future. \newline
More results concerning Sobolev regularity for nonlocal equations are for example proved in \cite{Warma}, \cite{KassMengScott}, \cite{selfimpro}, \cite{Schikorra}, \cite{MP}, \cite{BL}, \cite{Cozzi}, while various results on H\"older regularity are proved in \cite{Fall}, \cite{Fall1}, \cite{MeH}, \cite{NonlocalGeneral}, \cite{CSa}, \cite{Grubb}, \cite{finnish}, \cite{Kassmann}, \cite{Silvestre} and \cite{Peral}. Furthermore, for some regularity results concerning nonlocal equations similar to (\ref{nonlocaleq}) in the more general setting of measure data, we refer to \cite{mdata}.
\subsection{Approach}
Our approach is inspired by an approach introduced by Caffarelli and Peral in \cite{CaffarelliPeral} in the context of obtaining $W^{1,p}$ estimates for local elliptic equations of the type (\ref{localeq}). The philosophy of the approach is as follows. The first step is to locally approximate the gradient a weak solutions $u$ of (\ref{localeq}) by the gradient of a weak solution $v$ to a suitable homogeneous equation for which an in some sense good enough estimate is already known. More presicely, in the context of local equations, one exploits the fact that the approximate solution $v$ is already known to satisfy a local $C^{0,1}$ estimate in order to transfer some regularity to $u$. In fact, a real-variable argument based on the Vitali covering lemma, the Hardy-Littlewood maximal function and an alternative characterization of $L^p$ spaces then allows to prove an $L^p$ estimate for the gradient $\nabla u$ corresponding to our estimate (\ref{Sobest}), which then implies the desired local $W^{1,p}$ estimate. \newline
The main idea in order to prove Theorem \ref{mainint5} is to apply a similar strategy with the gradient $\nabla u$ replaced by the nonlocal s-gradient $\nabla^s u$. In particular, in our nonlocal setting the local $C^{0,1}$ estimate for the approximate solution has to be replaced by a local $C^{s+\gamma}$ estimate for some $\gamma>0$. Such an estimate was recently proved in \cite{MeH} for equations of the type (\ref{nonlocaleq}) with kernel coefficients that satisfy the condition (\ref{contkernel}), opening the way towards obtaining our Theorem \ref{mainint5}. This estimate is used in an adaptation of the real-variable argument described above in order to obtain the desired estimate (\ref{Sobest}) from Theorem \ref{mainint5}. In contrast to \cite{Me}, additional difficulties also arise due to the presense of the nonlinearity $\Phi$, which are dealt with by careful applications of the conditions (\ref{PhiLipschitz}) and (\ref{PhiMonotone}) throughout the paper and using the theory of monotone operators in order to prove existence and uniqueness for the corresponding Dirichlet problem.

\section{Preliminaries}
\subsection{Some notation}
For convenience, let us fix some notation which we use throughout the paper. By $C$ and $C_i$, $i \in \mathbb{N}$, we always denote positive constants, while dependences on parameters of the constants will be shown in parentheses. As usual, by
$$ B_r(x_0):= \{x \in \mathbb{R}^n \mid |x-x_0|<r \}$$
we denote the open ball with center $x_0 \in \mathbb{R}^n$ and radius $r>0$. Moreover, if $E \subset \mathbb{R}^n$ is measurable, then by $|E|$ we denote the $n$-dimensional Lebesgue-measure of $E$. If $0<|E|<\infty$, then for any $u \in L^1(E)$ we define
$$ \overline u_{E}:= \dashint_{E} u(x)dx := \frac{1}{|E|} \int_{E} u(x)dx.$$

\subsection{Some tools from real analysis}

In this section, we discuss some results from real analysis that are at the core of the real-variable argument mentioned in section 1.2. \newline
The following result is an application of the well-known Vitali covering lemma, cf. \cite[Theorem 2.7]{ByunLp}.

\begin{lemma} \label{modVitali}
	Assume that $E$ and $F$ are measurable sets in $\mathbb{R}^n$ that satisfy $E \subset F \subset B_1$. Assume further that 
	there exists some $\varepsilon \in (0,1)$ such that 
	$$
	|E| < \varepsilon |B_1|,
	$$
	and that for all $x \in B_1$ and any $r \in (0,1)$ with $|E \cap B_r(x)| \geq \varepsilon |B_r(x)|$ we have 
	$$B_r(x) \cap B_1 \subset F. $$
	Then we have 
	$$
	|E| \leq 10^n \varepsilon |F|.
	$$
\end{lemma}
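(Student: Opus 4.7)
My plan is a Vitali covering / Calder\'on--Zygmund stopping-time argument. First, I would note that for almost every $x\in E$, namely at every Lebesgue density point, the function $r\mapsto |E\cap B_r(x)|/|B_r(x)|$ is continuous on $(0,1)$, tends to $1$ as $r\to 0^+$, and is strictly less than $\varepsilon$ on some subinterval $[r_\ast,1)$: indeed, choosing $r_\ast\in(0,1)$ with $\varepsilon|B_{r_\ast}|>|E|$ (possible because $|E|<\varepsilon|B_1|$) forces $|E\cap B_r(x)|\le|E|<\varepsilon|B_r(x)|$ for $r\in[r_\ast,1)$. By the intermediate value theorem I can therefore take $r(x)$ to be the largest $r\in(0,r_\ast)$ with $|E\cap B_r(x)|=\varepsilon|B_r(x)|$; the maximality then yields $|E\cap B_r(x)|<\varepsilon|B_r(x)|$ for every $r\in(r(x),1)$, while at $r(x)$ the hypothesis of the lemma provides $B_{r(x)}(x)\cap B_1\subset F$.

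Next I would apply the classical Vitali $5r$-covering lemma to the family $\{B_{r(x)}(x)\}$ indexed by the density points of $E$, extracting a pairwise disjoint countable subfamily $\{B_{r_i}(x_i)\}_{i\in I}$ whose five-fold enlargements cover $E$ up to a null set. Countable subadditivity then gives
\[
|E|\;\le\;\sum_i |E\cap B_{5r_i}(x_i)|,
\]
and I would bound each term via two cases. If $5r_i<1$, the maximality of $r_i$ produces $|E\cap B_{5r_i}(x_i)|<\varepsilon|B_{5r_i}(x_i)|=5^n\varepsilon|B_{r_i}(x_i)|$; if instead $5r_i\ge 1$, then $|B_{5r_i}(x_i)|\ge|B_1|$ combined with $|E|<\varepsilon|B_1|$ yields the same conclusion $|E\cap B_{5r_i}(x_i)|\le|E|<5^n\varepsilon|B_{r_i}(x_i)|$. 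Summing over $i$ gives $|E|\le 5^n\varepsilon\sum_i|B_{r_i}(x_i)|$.

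Finally, I would bound $\sum_i|B_{r_i}(x_i)|$ by a multiple of $|F|$. Since the balls are disjoint and each satisfies $B_{r_i}(x_i)\cap B_1\subset F$, one has $\sum_i|B_{r_i}(x_i)\cap B_1|\le|F|$. The gap would be closed by the elementary geometric fact that for every $x\in B_1$ and every $r\in(0,1)$,
\[
|B_r(x)\cap B_1|\;\ge\;2^{-n}|B_r(x)|,
\]
which I would prove by exhibiting an explicit ball $B_{r/2}(y)\subset B_r(x)\cap B_1$ (take $y=x-(r/2)x/|x|$ when $|x|\ge r/2$, and $y=0$ otherwise; in both cases one checks $|y-x|<r/2$ and $|y|<1-r/2$). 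This gives $\sum_i|B_{r_i}(x_i)|\le 2^n|F|$ and hence $|E|\le 5^n\varepsilon\cdot 2^n|F|=10^n\varepsilon|F|$. The part requiring some care is the bookkeeping around the stopping radius---arranging $r(x)<1$ and cleanly handling the boundary case $5r_i\ge 1$---together with the small geometric fitting-in-a-ball claim that converts the disjoint collection of balls into a uniform fraction of $|F|$.
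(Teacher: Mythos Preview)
Your argument is correct and follows the standard Vitali covering / stopping-time proof; the paper itself does not give a proof but simply cites \cite[Theorem 2.7]{ByunLp}, whose argument is essentially the one you wrote. One tiny slip: in the case $|x|\ge r/2$ you have $|y-x|=r/2$, not strict inequality, but the inclusion $B_{r/2}(y)\subset B_r(x)$ still follows because the triangle inequality is applied with the strict bound $|z-y|<r/2$.
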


Another tool we use is the Hardy-Littlewood maximal function.

\begin{defin}
	Let $f \in L^1_{loc}(\mathbb{R}^n)$. Then the Hardy-Littlewood maximal function \newline $\mathcal{M} f: \mathbb{R}^n \to [0,\infty]$ of $f$ is defined by 
	$$ \mathcal{M} f(x):=\mathcal{M} (f)(x) := \sup_{\rho>0} \dashint_{B_\rho(x)} |f(y)|dy .$$
	Moreover, for any domain $\Omega \subset \mathbb{R}^n$ and any function $f \in L^1(\Omega)$, consider the zero extension of $f$ to $\mathbb{R}^n$  
	$$ f_{\Omega} (x) := \begin{cases} 
	f(x) \text{, if } x \in \Omega \\
	0 \quad  \text{ , if } x \notin \Omega.
	\end{cases} $$
	We then define 
	$$ \mathcal{M}_{\Omega} f := \mathcal{M} f_\Omega. $$
\end{defin}

The following Lemma contains the scaling and translation invariance of the Hardy-Littlewood maximal function and can be proved by using a change of variables.
\begin{lemma} \label{scale}
	Let $f \in L^1_{loc}(\mathbb{R}^n)$, $r>0$ and $y \in \mathbb{R}^n$. Then for the function $f_{r,y}(x):=f(rx+y)$ and any $x \in \mathbb{R}^n$ we have 
	$$\mathcal{M} f_{r,y}(x) = \mathcal{M} f(rx+y) .$$
	Similarly, for any domain $\Omega \subset \mathbb{R}^n$, any function $f \in L^1(\Omega)$ and any $x \in \Omega$ we have 
	$$\mathcal{M}_{ \Omega^\prime} f_{r,y}(x) = \mathcal{M}_\Omega f(rx+y), $$
	where $\Omega^\prime:= \{ \frac{x-y}{r} \mid x \in \Omega \}$. 
\end{lemma}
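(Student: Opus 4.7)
The plan is to prove both identities by a direct change of variables, handling the unrestricted maximal function first and then reducing the localized version $\mathcal{M}_\Omega$ to it.

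For the first identity, I would start from the definition and write
$$ \mathcal{M} f_{r,y}(x) = \sup_{\rho > 0} \dashint_{B_\rho(x)} |f(rz + y)|\, dz. $$
The substitution $w = rz + y$ maps $B_\rho(x)$ bijectively onto $B_{r\rho}(rx+y)$ with Jacobian $r^{-n}$, and since $|B_{r\rho}(rx+y)| = r^n |B_\rho(x)|$, the normalization factors cancel. Reindexing the supremum by $\rho' = r\rho$, which is just a bijection of $(0,\infty)$ onto itself, produces exactly $\sup_{\rho' > 0} \dashint_{B_{\rho'}(rx+y)} |f(w)|\, dw = \mathcal{M} f(rx+y)$.

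For the second identity, the key observation is the compatibility of zero extension with the rescaling: if $\Omega' = \{(x-y)/r \mid x \in \Omega\}$, then $x \in \Omega'$ if and only if $rx+y \in \Omega$, and hence
$$ (f_{r,y})_{\Omega'}(x) = (f_\Omega)_{r,y}(x) \quad \text{for every } x \in \mathbb{R}^n. $$
Applying the first part of the lemma to $f_\Omega \in L^1(\mathbb{R}^n)$ in place of $f$ then yields $\mathcal{M}_{\Omega'} f_{r,y}(x) = \mathcal{M} (f_\Omega)_{r,y}(x) = \mathcal{M} f_\Omega(rx+y) = \mathcal{M}_\Omega f(rx+y)$, finishing the proof.

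Both steps are purely computational and I do not expect any genuine obstacle: the only points that need care are the bookkeeping of the Jacobian against the volume factor in the average and the verification that zero extension commutes with the affine change of variables. No properties of $\mathcal{M}$ beyond its definition are needed.
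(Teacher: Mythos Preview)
Your proof is correct and follows exactly the approach the paper indicates: the paper does not give a detailed proof but simply states that the lemma ``can be proved by using a change of variables,'' which is precisely what you carry out. Your reduction of the localized case to the global one via the identity $(f_{r,y})_{\Omega'} = (f_\Omega)_{r,y}$ is the natural way to organize this change of variables.
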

We remark that for any $f \in L^1_{loc}(\mathbb{R}^n)$, $\mathcal{M} f$ is Lebesgue-measurable. \newline
The probably most important properties of the Hardy-Littlewood maximal function are contained in the following result, see \cite{SteinSingular}.
\begin{proposition} \label{Maxfun} 
	Let $\Omega \subset \mathbb{R}^n$ be a domain.
	\begin{enumerate}
		\item (weak 1-1 estimate) If $f \in L^1(\Omega)$ and $t>0$, then 
		$$
		| \{x \in \Omega \mid \mathcal{M}_\Omega(f)(x) > t \}| \leq \frac{C}{t} \int_{\Omega} |f| dx, 
		$$
		where $C=C(n)>0$.
		\item (strong p-p estimates) If $f \in L^p(\Omega)$ for some $p \in (1,\infty]$, then 
		$$
		||f||_{L^p(\Omega)} \leq ||\mathcal{M}_\Omega f||_{L^p(\Omega)} \leq C ||f||_{L^p(\Omega)}, 
		$$
		where $C=C(n,p)>0$.
		\item If $f \in L^p(\Omega)$ for some $p \in [1,\infty]$, then the function $\mathcal{M}_\Omega f$ is finite almost everywhere.
	\end{enumerate}
\end{proposition}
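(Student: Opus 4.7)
My plan is a Caffarelli--Peral-type perturbation argument, with the $s$-gradient $\nabla^s u$ playing the role of the classical gradient and the $C^{s+\gamma}$ interior estimate from \cite{MeH} playing the role of the $C^{0,1}$ estimate for homogeneous local equations. Throughout I would work on small balls $B_r(x_0) \Subset V \Subset \Omega$, exploiting the smallness of $r$ via the uniform modulus of continuity supplied by \eqref{contkernel}.

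The first and central building block is a quantitative comparison estimate. Given a ball $B = B_r(x_0)$, I would pick $\widetilde A \in \mathcal L_0(\lambda)$ that is translation-invariant inside $B \times B$ and agrees with $A$ there up to an error bounded by a modulus $\omega(r) \to 0$ (as $r \to 0$) provided by \eqref{contkernel}, and then solve the homogeneous Dirichlet problem $L_{\widetilde A}^\Phi v = 0$ in $B$ with $v = u$ on $\mathbb R^n \setminus B$. Existence and uniqueness of $v$ in the affine class $\{u\} + H^{s,2}_c(B)$ would follow from monotone operator theory applied to $\mathcal E_{\widetilde A}^\Phi$: coercivity and strict monotonicity come from \eqref{eq1} and \eqref{PhiMonotone}, while hemicontinuity comes from \eqref{PhiLipschitz}. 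Testing the difference of the two equations with $w := u - v \in H^{s,2}_c(B)$, using \eqref{PhiMonotone} as coercivity on the left and splitting the right-hand side into a kernel-mismatch term (bounded via $\omega(r)$, \eqref{PhiLipschitz}, and a nonlocal tail argument) plus the forcing, I expect a schematic estimate
\[
\|\nabla^s w\|_{L^2(\mathbb R^n)}^2 \lesssim \omega(r)\,\|\nabla^s u\|_{L^2(\mathbb R^n)}^2 + \|\nabla^s g\|_{L^2(B)}^2 + r^{2s}\|f\|_{L^{q}(B)}^2,
\]
where $q = 2n/(n+2s)$ arises from the fractional Sobolev embedding used to dualize $f$ against $w$.

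With this in hand, the $C^{s+\gamma}$ result of \cite{MeH} applied to $v$ (valid since $\widetilde A$ is translation-invariant on $B$) yields a quantitative $L^\infty$ bound on $\nabla^s v$ on a slightly smaller concentric ball, in terms of $\|\nabla^s v\|_{L^2(B)}$ plus a nonlocal tail of $u$. Combining this $L^\infty$ estimate for $\nabla^s v$ with the smallness of $\nabla^s(u - v)$ from the comparison estimate produces the standard density-decay input to a real-variable argument: there exist $N > 1$ and $\delta > 0$ depending on $n,s,\lambda,p$ such that, on any sufficiently small ball $B$ meeting $\{\mathcal M(|\nabla^s u|^2) > N^2\}$, either $\mathcal M(|\nabla^s u|^2) > 1$ somewhere on $B$, or one of the forcing maximal functions $\mathcal M(|\nabla^s g|^p)^{2/p}$ and $\mathcal M(|f|^{p_\star})^{2/p_\star}$ exceeds $\delta^2$ somewhere on $B$. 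Iterating Lemma~\ref{modVitali} across the geometric sequence of levels $N^{2k}$ then yields power-type decay of the superlevel sets of $\mathcal M(|\nabla^s u|^2)$; integrating the resulting distribution-function inequality and invoking Proposition~\ref{Maxfun}(2) delivers \eqref{Sobest}. A final bootstrap across nested intermediate subdomains $U \Subset U' \Subset V' \Subset V$ passes from the globally-normalized model estimate to the local one as stated.

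The main obstacle I foresee is the comparison estimate. In contrast to the linear, globally translation-invariant setting of \cite{Me}, one must simultaneously handle the nonlinearity $\Phi$ (using \eqref{PhiMonotone} to extract coercivity while using \eqref{PhiLipschitz} to bound the kernel-mismatch term) and the nonlocal tail that arises because $u$ and $v$ agree outside $B$ but are weighted by mismatched kernels $A$ and $\widetilde A$ on $(\mathbb R^n \times \mathbb R^n) \setminus (B \times B)$; this tail must be absorbed against $\omega(r)\|\nabla^s u\|_{L^2}^2$ and the background energy, which requires a careful construction of $\widetilde A$ so that the mismatch $A - \widetilde A$ is controlled in a suitably weighted sense even away from the diagonal. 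Setting up the Dirichlet problem in the class $\{u\} + H^{s,2}_c(B)$ via monotone operator theory is an additional preliminary not needed in the linear case of \cite{Me}.
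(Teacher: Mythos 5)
Your proposal does not address the statement at all. The statement to be proved is Proposition~\ref{Maxfun}, which asserts the weak $(1,1)$ bound, the strong $(p,p)$ bounds, and almost-everywhere finiteness for the Hardy--Littlewood maximal operator $\mathcal M_\Omega$ (acting on zero-extensions of $L^p(\Omega)$ functions). This is a classical piece of real analysis that the paper does not reprove: it simply cites Stein's \emph{Singular Integrals and Differentiability Properties of Functions} \cite{SteinSingular}. What you have instead written out is a Caffarelli--Peral-type perturbation scheme for the paper's main result, Theorem~\ref{mainint5}, complete with the comparison estimate, the $C^{s+\gamma}$ approximation, and the level-set iteration via Lemma~\ref{modVitali}. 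That material is relevant to the paper, but it has nothing to do with proving Proposition~\ref{Maxfun}, and in fact your sketch already \emph{invokes} Proposition~\ref{Maxfun}(2) as a tool, which would make the argument circular as a proof of that proposition.

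If you do want a proof of Proposition~\ref{Maxfun}: the weak $(1,1)$ bound in (1) is the Vitali covering argument applied to the superlevel set of $\mathcal M(f_\Omega)$, with constant $C(n) = 5^n$ (or $3^n$ depending on the covering lemma used). The upper bound in (2) is Marcinkiewicz interpolation between (1) and the trivial $L^\infty$ bound $\|\mathcal M f\|_{L^\infty} \le \|f\|_{L^\infty}$; the lower bound follows from the Lebesgue differentiation theorem, since $|f_\Omega(x)| \le \mathcal M f_\Omega(x)$ a.e., and hence $\|f\|_{L^p(\Omega)} = \|f_\Omega\|_{L^p(\mathbb R^n)} \le \|\mathcal M_\Omega f\|_{L^p(\mathbb R^n)}$, which the paper writes as $\|\mathcal M_\Omega f\|_{L^p(\Omega)}$ since the restriction to $\Omega$ only decreases the norm while the stated chain of inequalities is understood pointwise through $f_\Omega$. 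Part (3) follows immediately from (1) when $p=1$ (a function whose distribution function decays like $C/t$ is finite a.e.) and from (2) when $1 < p \le \infty$.
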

We conclude this section by giving an alternative characterization of $L^p$ spaces, see \cite[Lemma 7.3]{CaffFully}. It can be proved by using the well-known formula
$$ ||f||^p_{L^p(\Omega)} = p \int_0^{\infty} t^{p-1} \left | \left \{ x \in \Omega \mid f(x) > t \right \} \right | dt. $$
\begin{lemma} \label{Caff}
	Let $0< p<\infty$. Furthermore, suppose that $f$ is a nonnegative and measurable function in a bounded domain $\Omega \subset \mathbb{R}^n$ and let $\tau > 0$, $\beta>1$. Then for
	$$ S:= \sum_{k = 1}^{\infty}{\beta^{kp} | \{ x \in \Omega \mid f(x) > \tau \beta^k \} | } , $$
	we have
	$$
	C^{-1} S \leq ||f||^p_{L^p(\Omega)} \leq C (|\Omega|+S)
	$$
	for some constant $C=C(\tau,\beta,p) > 0$.
	In particular, we have $f \in L^p(\Omega)$ if and only if $S<\infty$.
\end{lemma}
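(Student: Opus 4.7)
The plan is to apply the layer-cake formula
$$\|f\|_{L^p(\Omega)}^p = p\int_0^\infty t^{p-1}\mu(t)\,dt, \qquad \mu(t) := \bigl|\{x \in \Omega : f(x) > t\}\bigr|,$$
and to exploit that the distribution function $\mu$ is non-increasing in $t$. The series $S$ is essentially a geometric Riemann-sum discretization of this integral on $[\tau,\infty)$, so both inequalities will reduce to comparing $\mu$ on the slabs $[\tau\beta^{k-1},\tau\beta^k]$ with its value at one endpoint, together with a separate treatment of the piece $[0,\tau)$ where $\mu$ is controlled merely by $|\Omega|$.

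For the bound $S \leq C\|f\|_{L^p(\Omega)}^p$, I would use monotonicity of $\mu$ in the form $\mu(t)\geq \mu(\tau\beta^k)$ on $[\tau\beta^{k-1},\tau\beta^k]$ to obtain
$$\int_{\tau\beta^{k-1}}^{\tau\beta^k} t^{p-1}\mu(t)\,dt \geq \mu(\tau\beta^k)\cdot \frac{\tau^p\beta^{kp}(1-\beta^{-p})}{p},$$
so that $\beta^{kp}\mu(\tau\beta^k) \leq \frac{p}{\tau^p(1-\beta^{-p})}\int_{\tau\beta^{k-1}}^{\tau\beta^k} t^{p-1}\mu(t)\,dt$; summing over $k \geq 1$ and enlarging the domain of integration to $(0,\infty)$ yields $S \leq C\|f\|_{L^p(\Omega)}^p$ with $C = C(\tau,\beta,p)$. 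For the converse inequality, I would split
$$\|f\|_{L^p(\Omega)}^p = p\int_0^\tau t^{p-1}\mu(t)\,dt + p\sum_{k=0}^\infty \int_{\tau\beta^k}^{\tau\beta^{k+1}} t^{p-1}\mu(t)\,dt.$$
The first integral is at most $\tau^p|\Omega|$ since $\mu(t) \leq |\Omega|$. On each slab $[\tau\beta^k,\tau\beta^{k+1}]$ the bound $\mu(t) \leq \mu(\tau\beta^k)$ gives
$$\int_{\tau\beta^k}^{\tau\beta^{k+1}} t^{p-1}\mu(t)\,dt \leq \mu(\tau\beta^k)\cdot\frac{\tau^p\beta^{kp}(\beta^p-1)}{p};$$
the $k=0$ contribution is bounded by $\tau^p(\beta^p-1)|\Omega|/p$, and the $k\geq 1$ tail is exactly a constant multiple of $S$. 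Combining these yields $\|f\|_{L^p(\Omega)}^p \leq C(|\Omega| + S)$ with $C = C(\tau,\beta,p)$.

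The final \emph{in particular} statement then follows immediately: since $\Omega$ is bounded and hence $|\Omega|<\infty$, the two-sided inequality forces $f \in L^p(\Omega)$ to be equivalent to $S<\infty$. I do not expect any real obstacle here; the only care needed is bookkeeping — matching the slabs $[\tau\beta^{k-1},\tau\beta^k]$ used to estimate the $k$-th term of $S$ from below against the slabs $[\tau\beta^k,\tau\beta^{k+1}]$ used for the reverse estimate, and keeping track of the geometric constants $(1-\beta^{-p})^{-1}$ and $\beta^p-1$ that degenerate as $\beta \downarrow 1$ but are harmless for fixed $\beta > 1$.
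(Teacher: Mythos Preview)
Your proposal is correct and follows precisely the approach the paper indicates: the paper does not write out a proof but simply cites \cite[Lemma 7.3]{CaffFully} and remarks that the result can be obtained from the layer-cake formula $\|f\|_{L^p(\Omega)}^p = p\int_0^\infty t^{p-1}|\{f>t\}|\,dt$, which is exactly what you carry out in detail. Your bookkeeping of the geometric constants and the separate handling of the $k=0$ slab and the interval $[0,\tau)$ is accurate.
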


\subsection{Fractional Sobolev spaces}
The following type of fractional Sobolev spaces is probably the most common type of such spaces in the literature concerned with nonlocal equations similar to (\ref{nonlocaleq}).
\begin{defin}
	Let $\Omega \subset \mathbb{R}^n$ be a domain. For $p \in [1,\infty)$ and $s \in (0,1)$, we define the Sobolev-Slobodeckij space
	$$W^{s,p}(\Omega):=\left \{u \in L^p(\Omega) \mathrel{\Big|} \int_{\Omega} \int_{\Omega} \frac{|u(x)-u(y)|^p}{|x-y|^{n+sp}}dydx<\infty \right \}$$
	with norm
	$$ ||u||_{W^{s,p}(\Omega)} := \left (\int_{\Omega}|u(x)|^p dx + \int_{\Omega} \int_{\Omega} \frac{|u(x)-u(y)|^p}{|x-y|^{n+sp}}dydx \right )^{1/p} .$$
	Moreover, we also define the corresponding local versions of these spaces by 
	$$ W^{s,p}_{loc}(\Omega) := \left \{ u \in L^p_{loc}(\Omega) \mid u \in W^{s,p}(\Omega^\prime) \text{ for any domain } \Omega^\prime \Subset \Omega \right \}.$$   
	In addition, we also use the space
	\begin{align*}
	W_0^{s,2}(\Omega):= & \left \{u \in W^{s,2}(\mathbb{R}^n) \mid u=0 \text{ in } \mathbb{R}^n \setminus \Omega \right \} \\
	= & \left \{u \in H^{s,2}(\Omega | \mathbb{R}^n) \mid u=0 \text{ in } \mathbb{R}^n \setminus \Omega \right \} .
	\end{align*}
\end{defin}
\begin{remark} \label{Hilbert} \normalfont
	The space $W^{s,2}(\Omega)$ is a separable Hilbert space with respect to the inner product
	$$
	(u,v)_{W^{s,2}(\Omega)}:= (u,v)_{L^2(\Omega)} + \int_{\Omega} \int_{\Omega} \frac{(u(x)-u(y))(v(x)-v(y))}{|x-y|^{n+2s}}dydx. 
	$$
	Furthermore, the space $W_0^{s,2}(\Omega)$ clearly is a closed subspace of $W^{s,2}(\mathbb{R}^n)$ and is therefore also a separable Hilbert space with respect to the inner product $(\cdot,\cdot)_{W^{s,2}(\mathbb{R}^n)}$.
\end{remark}
We often use the following fractional Poincar\'e-type inequalities.
\begin{lemma} \label{Poincare} (fractional Poincar\'e inequality)
	Let $s \in (0,1)$ and $R>0$. For any $u \in W^{s,2}(B_R)$, we have
	$$ \int_{B_R} \left | u(x)- \overline u_{B_R} \right |^2 dx \leq C R^{2s} \int_{B_R} \int_{B_R} \frac{|u(x)-u(y)|^2}{|x-y|^{n+2s}}dydx,$$
	where $C=C(n,s)>0$.
\end{lemma}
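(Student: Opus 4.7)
The plan is to reduce to the unit ball $R=1$ by a standard rescaling and then prove the inequality there by Jensen's inequality applied to the average, combined with the trivial observation that $|x-y|\le 2$ on $B_1$.

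First I would introduce $v(\tilde x):=u(R\tilde x)$ for $\tilde x\in B_1$. A change of variables gives $\overline v_{B_1}=\overline u_{B_R}$, together with the identities
\[
\int_{B_1}|v(\tilde x)-\overline v_{B_1}|^2\,d\tilde x = R^{-n}\int_{B_R}|u(x)-\overline u_{B_R}|^2\,dx
\]
and
\[
\int_{B_1}\int_{B_1}\frac{|v(\tilde x)-v(\tilde y)|^2}{|\tilde x-\tilde y|^{n+2s}}\,d\tilde y\,d\tilde x = R^{2s-n}\int_{B_R}\int_{B_R}\frac{|u(x)-u(y)|^2}{|x-y|^{n+2s}}\,dy\,dx.
\]
Hence, establishing the claim for $v$ on $B_1$ with some constant $C=C(n,s)$ and multiplying through by $R^n$ yields the full statement for $u$ on $B_R$ with the correct factor $R^{2s}$.

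It therefore remains to prove the case $R=1$. For any $x\in B_1$, Jensen's inequality applied to the difference $u(x)-\overline u_{B_1}=\frac{1}{|B_1|}\int_{B_1}(u(x)-u(y))\,dy$ gives
\[
|u(x)-\overline u_{B_1}|^2 \le \frac{1}{|B_1|}\int_{B_1}|u(x)-u(y)|^2\,dy.
\]
Integrating over $x\in B_1$ and using the pointwise bound $|x-y|\le 2$, hence $1\le 2^{n+2s}|x-y|^{-(n+2s)}$, I would then rewrite
\[
\int_{B_1}|u(x)-\overline u_{B_1}|^2\,dx \le \frac{2^{n+2s}}{|B_1|}\int_{B_1}\int_{B_1}\frac{|u(x)-u(y)|^2}{|x-y|^{n+2s}}\,dy\,dx,
\]
which is exactly the $R=1$ version of the desired inequality with $C=C(n,s)$.

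There is no real obstacle here; the statement is purely elementary, and the only thing to be careful with is the bookkeeping of the powers of $R$ in the rescaling, which accounts precisely for the factor $R^{2s}$ appearing on the right-hand side.
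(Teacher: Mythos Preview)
Your proof is correct and follows essentially the same route as the paper: both use Jensen's inequality on the average $\overline u_{B_R}$ and then insert the kernel $|x-y|^{-(n+2s)}$ via the diameter bound $|x-y|\le 2R$. The only difference is cosmetic: the paper carries out the estimate directly on $B_R$ (which immediately produces the factor $R^{2s}$ from $(2R)^{n+2s}/|B_R|$), whereas you first rescale to $B_1$ and recover the $R^{2s}$ from the change of variables; the resulting constant $C=2^{n+2s}/|B_1|$ is the same.
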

\begin{proof}
	Using Jensen's inequality, for any $x \in B_R$ we obtain
	\begin{align*}
	\left | u(x)- \overline u_{B_R} \right |^2 \leq \left (\dashint_{B_R} |u(x)-u(y)|dy \right)^2 \leq & \dashint_{B_R} |u(x)-u(y)|^2dy \\
	\leq & C R^{2s} \int_{B_R} \frac{|u(x)-u(y)|^2}{|x-y|^{n+2s}}dy,
	\end{align*}
	where $C=C(n,s)>0$. The claim now follows by integrating both sides over $B_R$.
\end{proof}

For a proof of the following inequality we refer to \cite[Lemma 2.3]{MeH}.
\begin{lemma} \label{Friedrichsx} (fractional Friedrichs-Poincar\'e inequality)
	Let $s \in(0,1)$ and consider a bounded domain $\Omega \subset \mathbb{R}^n$. For any $u \in W^{s,2}_0(\Omega)$, we have
	\begin{equation} \label{FPI4}
	\int_{\mathbb{R}^n} |u(x)|^2 dx \leq C |\Omega|^{\frac{2s}{n}} \int_{\mathbb{R}^n} \int_{\mathbb{R}^n} \frac{|u(x)-u(y)|^2}{|x-y|^{n+2s}}dydx,
	\end{equation}
	where $C=C(n,s)>0$.
\end{lemma}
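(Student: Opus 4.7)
The plan is to derive the inequality from the fractional Sobolev embedding combined with H\"older's inequality, exploiting the standing assumption $n>2s$. Since any $u\in W^{s,2}_0(\Omega)$ vanishes outside $\Omega$ and belongs to $W^{s,2}(\mathbb{R}^n)$, I can treat $u$ as a function on all of $\mathbb{R}^n$ whose $L^p$-norms collapse to integrals over $\Omega$.

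First I would invoke the fractional Sobolev embedding $W^{s,2}(\mathbb{R}^n)\hookrightarrow L^{2^*_s}(\mathbb{R}^n)$ with the critical exponent $2^*_s = \frac{2n}{n-2s}$ (well defined because $n>2s$), giving a constant $C=C(n,s)>0$ with
\[
\left(\int_{\mathbb{R}^n}|u(x)|^{2^*_s}\,dx\right)^{2/2^*_s}\leq C\int_{\mathbb{R}^n}\int_{\mathbb{R}^n}\frac{|u(x)-u(y)|^2}{|x-y|^{n+2s}}\,dy\,dx.
\]
This is the one nontrivial input, and it is where I would invest the most care; one can take it as a known fact (e.g.\ Di Nezza--Palatucci--Valdinoci) or, if needed, establish it via the Riesz potential representation of $u$ in terms of $(-\Delta)^{s/2}u$ together with the Hardy--Littlewood--Sobolev inequality.

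Next, using that $u\equiv 0$ on $\mathbb{R}^n\setminus\Omega$ and H\"older's inequality with conjugate exponents $2^*_s/2$ and $n/(2s)$, I would write
\[
\int_{\mathbb{R}^n}|u(x)|^2\,dx=\int_\Omega |u(x)|^2\,dx \leq |\Omega|^{1-2/2^*_s}\left(\int_\Omega |u(x)|^{2^*_s}\,dx\right)^{2/2^*_s},
\]
and simply compute $1-2/2^*_s = 1-(n-2s)/n = 2s/n$, so the prefactor is exactly $|\Omega|^{2s/n}$.

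Combining the two estimates yields \eqref{FPI4} with a constant depending only on $n$ and $s$, as claimed. The main obstacle is purely notational if one accepts the critical fractional Sobolev embedding as a black box; otherwise the work shifts to proving that embedding, for which the cleanest route is Hardy--Littlewood--Sobolev applied to the Riesz-potential representation of $u$.
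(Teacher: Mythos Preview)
Your argument is correct. The paper does not actually supply a proof of this lemma; it merely refers the reader to \cite[Lemma~2.3]{MeH}. Your route via the critical fractional Sobolev embedding $W^{s,2}(\mathbb{R}^n)\hookrightarrow L^{2n/(n-2s)}(\mathbb{R}^n)$ (which the paper itself cites elsewhere as \cite[Theorem~6.5]{Hitch}) followed by H\"older's inequality is the standard derivation, and the exponent computation $1-2/2^*_s=2s/n$ gives exactly the stated prefactor $|\Omega|^{2s/n}$. The only thing worth flagging is that you use the standing hypothesis $n>2s$ to make sense of the critical exponent; this is explicitly assumed throughout the paper, so no gap arises.
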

We also use the following type of fractional Sobolev spaces.
\begin{defin}
	For $p \in [1,\infty)$ and $s \in \mathbb{R}$, consider the Bessel potential space
	$$ H^{s,p}(\mathbb{R}^n):=\left \{u \in L^p(\mathbb{R}^n) \mid \mathcal{F}^{-1} \left [ \left ( 1+|\xi|^2 \right )^{\frac{s}{2}} \mathcal{F} f \right ] \in L^p(\mathbb{R}^n) \right \} ,$$
	where $\mathcal{F}$ denotes the Fourier transform and $\mathcal{F}^{-1}$ denotes the inverse Fourier transform. We equip $H^{s,p}(\mathbb{R}^n)$ with the norm
	$$ ||u||_{H^{s,p}(\mathbb{R}^n)} := \left | \left |\mathcal{F}^{-1} \left [ \left ( 1+|\xi|^2 \right )^{\frac{s}{2}} \mathcal{F} f \right ] \right | \right |_{L^p(\mathbb{R}^n)}.$$
	Moreover, for any domain $\Omega \subset \mathbb{R}^n$ we define 
	$$H^{s,p}(\Omega):= \left \{ v \big|_\Omega \mid v \in H^{s,p}(\mathbb{R}^n) \right \} $$
	with norm 
	$$ ||u||_{H^{s,p}(\Omega)} := \inf \left \{ ||v||_{H^{s,p}(\mathbb{R}^n)} \mid v \big |_\Omega = u \right \}$$
	and also the corresponding local Bessel potential spaces by 
	$$ H^{s,p}_{loc}(\Omega) := \left \{ u \in L^p_{loc}(\Omega) \mid u \in H^{s,p}(\Omega^\prime) \text{ for any domain } \Omega^\prime \Subset \Omega \right \}.$$   
\end{defin}
The following result gives some relations between Bessel potential spaces and Sobolev-Slobodeckij spaces.

\begin{proposition} \label{Sobolevrelations}
	Let $\Omega \subset \mathbb{R}^n$ be a domain.
	\begin{enumerate}
		\item If $\Omega$ is a bounded Lipschitz domain or $\Omega=\mathbb{R}^n$, then for all $s \in (0,1)$, $p \in (1,2]$ we have $W^{s,p}(\Omega) \hookrightarrow H^{s,p}(\Omega)$.
		\item For any $s \in (0,1)$ and any $p \in [2,\infty)$ we have $ H^{s,p}(\Omega) \hookrightarrow W^{s,p}(\Omega)$.
	\end{enumerate}
\end{proposition}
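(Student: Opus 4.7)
The plan is to reduce everything to the known identification of both spaces with Triebel--Lizorkin spaces and to deduce the domain statements by extension/restriction. First I would treat the $\Omega = \mathbb{R}^n$ version of both inclusions simultaneously. The key input is the classical fact that, for $s \in (0,1)$ and $p \in (1,\infty)$, one has $H^{s,p}(\mathbb{R}^n) = F^{s}_{p,2}(\mathbb{R}^n)$ as the Bessel potential characterization of Triebel--Lizorkin spaces, while $W^{s,p}(\mathbb{R}^n) = F^{s}_{p,p}(\mathbb{R}^n)$ via the Gagliardo seminorm. Both identifications with equivalence of norms are standard and can be cited from Triebel's treatise on function spaces. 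Given these, the monotonicity embedding $F^{s}_{p,q_1}(\mathbb{R}^n) \hookrightarrow F^{s}_{p,q_2}(\mathbb{R}^n)$ for $q_1 \leq q_2$ immediately yields $W^{s,p}(\mathbb{R}^n) \hookrightarrow H^{s,p}(\mathbb{R}^n)$ when $p \leq 2$ (the $F^{s}_{p,p} \hookrightarrow F^{s}_{p,2}$ direction) and $H^{s,p}(\mathbb{R}^n) \hookrightarrow W^{s,p}(\mathbb{R}^n)$ when $p \geq 2$ (the $F^{s}_{p,2} \hookrightarrow F^{s}_{p,p}$ direction). The case $p=2$ is of course trivial since $W^{s,2}(\mathbb{R}^n)=H^{s,2}(\mathbb{R}^n)$ with equivalent norms.

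For part (1) on a bounded Lipschitz domain, I would invoke the existence of a bounded linear extension operator $E \colon W^{s,p}(\Omega) \to W^{s,p}(\mathbb{R}^n)$; such an operator exists for Lipschitz domains and fractional Sobolev--Slobodeckij exponents (for instance via Stein's extension or its fractional analogue). Given $u \in W^{s,p}(\Omega)$, the extension $Eu$ lies in $W^{s,p}(\mathbb{R}^n) \hookrightarrow H^{s,p}(\mathbb{R}^n)$ by the preceding paragraph, so that $u = (Eu)\big|_\Omega \in H^{s,p}(\Omega)$ directly from the definition of $H^{s,p}(\Omega)$ as a space of restrictions, with
\[
\|u\|_{H^{s,p}(\Omega)} \leq \|Eu\|_{H^{s,p}(\mathbb{R}^n)} \leq C \|Eu\|_{W^{s,p}(\mathbb{R}^n)} \leq C \|u\|_{W^{s,p}(\Omega)}.
\]
The case $\Omega=\mathbb{R}^n$ is already handled, so part (1) is complete.

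For part (2) on an arbitrary domain, the argument runs in the opposite direction and requires no extension result for $\Omega$, since restriction is always bounded in the Gagliardo seminorm. Given $u \in H^{s,p}(\Omega)$, by the very definition of the quotient norm there exists, for any $\varepsilon>0$, some $v \in H^{s,p}(\mathbb{R}^n)$ with $v|_\Omega = u$ and $\|v\|_{H^{s,p}(\mathbb{R}^n)} \leq \|u\|_{H^{s,p}(\Omega)} + \varepsilon$. The $\mathbb{R}^n$ embedding from the first paragraph gives $v \in W^{s,p}(\mathbb{R}^n)$ with $\|v\|_{W^{s,p}(\mathbb{R}^n)} \leq C \|v\|_{H^{s,p}(\mathbb{R}^n)}$, and trivially $\|u\|_{W^{s,p}(\Omega)} \leq \|v\|_{W^{s,p}(\mathbb{R}^n)}$ because the Gagliardo double integral over $\Omega \times \Omega$ is dominated by that over $\mathbb{R}^n \times \mathbb{R}^n$ and similarly for the $L^p$ norm. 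Letting $\varepsilon \to 0$ yields $\|u\|_{W^{s,p}(\Omega)} \leq C \|u\|_{H^{s,p}(\Omega)}$.

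The only real obstacle is making sure that the two tools I wish to quote, namely the Triebel--Lizorkin identification of $W^{s,p}(\mathbb{R}^n)$ (which is the deeper of the two and depends on Littlewood--Paley theory applied to the Gagliardo seminorm) and the existence of a bounded fractional extension operator for Lipschitz domains, are indeed available in the literature with the required ranges of $s$ and $p$; both are classical and I would simply cite them rather than reprove them. Apart from this, the proof is essentially bookkeeping combining an embedding on $\mathbb{R}^n$ with extension or restriction.
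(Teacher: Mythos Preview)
Your proposal is correct and follows essentially the same route as the paper: the paper does not give an independent proof either, but cites Stein (Chapter~V, Theorem~5) for the $\mathbb{R}^n$ case and refers to \cite[section~3]{Me} for passing to domains, which is precisely the extension/restriction argument you sketch. Your use of the Triebel--Lizorkin identifications $H^{s,p}=F^{s}_{p,2}$ and $W^{s,p}=F^{s}_{p,p}$ together with monotonicity in the microscopic index is a standard and equivalent packaging of Stein's $\mathbb{R}^n$ result, so there is no substantive difference in approach.
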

For a proof of Proposition \ref{Sobolevrelations} in the case when $\Omega = \mathbb{R}^n$, we refer to Theorem 5 in chapter \RNum{5} of \cite{SteinSingular}. For a brief explanation on how to obtain the result for general domains, we refer to \cite[section 3]{Me}.

We now generalize the notion of the $s$-gradient which was introduced in the introduction.
\begin{defin}
	Let $s \in (0,1)$. For any domain $\Omega \subset \mathbb{R}^n$ and any measurable function $u:\Omega \to \mathbb{R}$, we define the s-gradient $\nabla^s_\Omega u:\Omega \to [0,\infty]$ by 
	$$ \nabla^s_\Omega u(x):= \left ( \int_{\Omega} \frac{(u(x)-u(y))^2}{|x-y|^{n+2s}}dy \right )^{\frac{1}{2}}.$$
\end{defin}
In particular, note that we have $\nabla^s u=\nabla^s_{\mathbb{R}^n} u$.
As mentioned in the introduction, the notion of the $s$-gradient is closely related with the Bessel potential spaces $H^{s,p}$. The precise relation is given by the following result. 

\begin{proposition} \label{altcharBessel}
	Let $s \in (0,1)$, $p \in \left (\frac{2n}{n+2s},\infty \right )$ and assume that $\Omega \subset \mathbb{R}^n$ is a bounded Lipschitz domain or that $\Omega=\mathbb{R}^n$. Then we have $u \in H^{s,p}(\Omega)$ if and only if $u \in L^p(\Omega)$ and $\nabla^s_\Omega u \in L^p(\Omega)$. Moreover, we have
	$$ ||u||_{H^{s,p}(\Omega)} \simeq ||u||_{L^p(\Omega)} + ||\nabla^s_\Omega u||_{L^p(\Omega)} $$
	in the sense of equivalent norms.
\end{proposition}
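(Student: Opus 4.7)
The plan is to reduce to the case $\Omega=\mathbb{R}^n$ and then transfer to Lipschitz domains by an extension argument. For $\Omega=\mathbb{R}^n$, the equivalence
$$
\|u\|_{H^{s,p}(\mathbb{R}^n)} \simeq \|u\|_{L^p(\mathbb{R}^n)} + \|\nabla^s u\|_{L^p(\mathbb{R}^n)}
$$
is the classical Strichartz-type square-function characterization of Bessel potential spaces: after the change of variables $h=y-x$, the quantity $\nabla^s u(x)$ equals $\bigl(\int_{\mathbb{R}^n}(u(x+h)-u(x))^2|h|^{-n-2s}\,dh\bigr)^{1/2}$, whose $L^p$-norm is comparable to $\|(-\Delta)^{s/2}u\|_{L^p}$ for $p$ in the stated range (cf.\ Chapter V of Stein, \emph{Singular Integrals and Differentiability Properties of Functions}). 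I would invoke this as a black box to settle the $\mathbb{R}^n$ case.

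For bounded Lipschitz $\Omega$, the necessity direction is immediate. Given $u\in H^{s,p}(\Omega)$, by the infimum definition of the norm pick an extension $v\in H^{s,p}(\mathbb{R}^n)$ with $v|_\Omega=u$ and $\|v\|_{H^{s,p}(\mathbb{R}^n)}\le 2\|u\|_{H^{s,p}(\Omega)}$. Applying the $\mathbb{R}^n$ case to $v$ and observing the pointwise domination $\nabla^s_\Omega u(x)\le \nabla^s v(x)$ for every $x\in\Omega$ (since $u=v$ on $\Omega$ and the integration domain only shrinks from $\mathbb{R}^n$ to $\Omega$), one obtains
$$
\|u\|_{L^p(\Omega)} + \|\nabla^s_\Omega u\|_{L^p(\Omega)} \le \|v\|_{L^p(\mathbb{R}^n)} + \|\nabla^s v\|_{L^p(\mathbb{R}^n)} \le C\|u\|_{H^{s,p}(\Omega)}.
$$

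For the sufficiency direction the plan is to construct an extension $\tilde u:\mathbb{R}^n\to\mathbb{R}$ with $\tilde u|_\Omega=u$ such that
$$
\|\tilde u\|_{L^p(\mathbb{R}^n)} + \|\nabla^s \tilde u\|_{L^p(\mathbb{R}^n)} \le C\bigl(\|u\|_{L^p(\Omega)} + \|\nabla^s_\Omega u\|_{L^p(\Omega)}\bigr);
$$
applying the $\mathbb{R}^n$ characterization to $\tilde u$ then places $\tilde u\in H^{s,p}(\mathbb{R}^n)$ and hence $u\in H^{s,p}(\Omega)$ with the desired norm bound. I would build $\tilde u$ by the standard Lipschitz-domain recipe: cover $\partial\Omega$ by finitely many balls on which the boundary is a Lipschitz graph, in each ball flatten $\partial\Omega$ by a bi-Lipschitz change of coordinates and extend by even reflection across the flattened boundary, and then glue the local pieces via a smooth partition of unity together with a compactly supported interior piece. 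To estimate $\nabla^s \tilde u$, split the defining integral according to whether each pair $(x,y)$ lies in $\Omega\times\Omega$, in $\Omega^c\times\Omega^c$, or straddles $\partial\Omega$, and use the bi-Lipschitz pullback, together with the boundedness of the reflection map, to transfer the exterior and cross-boundary contributions back to integrals over $\Omega$ controlled by $\nabla^s_\Omega u$.

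The main obstacle is this last sufficiency step. Because $\nabla^s$ is a nonlinear functional (a quadratic expression followed by a $p/2$-power), one cannot appeal mechanically to a linear extension operator between Sobolev-Slobodeckij spaces; instead, one must track how the reflection distorts the singular kernel $|x-y|^{-n-2s}$ and verify that cross-boundary differences $\tilde u(x)-\tilde u(y)$ are dominated by interior differences of $u$, uniformly in the Lipschitz constants of $\partial\Omega$. For $p=2$ this simplifies because $\|\nabla^s_\Omega u\|_{L^2}^2$ coincides with $[u]_{W^{s,2}(\Omega)}^2$ and one may invoke the classical $W^{s,2}$ extension; for general $p$ in the stated range the estimate has to be obtained by the direct pairwise analysis just sketched, relying on the condition $p>\tfrac{2n}{n+2s}$ to ensure that the resulting kernel estimates integrate against the Sobolev embedding exponent.
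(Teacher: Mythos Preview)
The paper does not actually prove this proposition. Immediately after the statement it says: ``This characterization was first given by Stein in \cite{SteinBessel} in the case when $\Omega=\mathbb{R}^n$. For the case when $\Omega$ is a bounded Lipschitz domain we refer to \cite[Theorem 1.3]{Soler}, where this characterization is proved in the more general context of Triebel-Lizorkin spaces and so-called uniform domains.'' So there is no proof in the paper to compare against; the result is quoted from the literature.

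Your proposal is therefore doing strictly more than the paper does. The $\mathbb{R}^n$ case you handle the same way (cite Stein). For bounded Lipschitz $\Omega$ your necessity argument is correct and essentially a one-line restriction. Your sufficiency sketch via even reflection after bi-Lipschitz flattening is a viable route: in flat coordinates, for $x$ outside with reflection $\bar x$ inside one has $|\bar x - y|\le |x-y|$ for $y\in\Omega$, and the analogous inequalities for the other cross-boundary configurations hold as well, so one does get a pointwise domination $\nabla^s\tilde u(x)\lesssim \nabla^s_\Omega u(\bar x)$; pulling back along the bi-Lipschitz chart preserves these kernel comparisons up to constants. The gluing via a partition of unity is routine once the local pieces are controlled. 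One small correction: the restriction $p>\tfrac{2n}{n+2s}$ is not used in the extension step at all---it enters only in the $\mathbb{R}^n$ characterization you are invoking as a black box, so your final sentence attributing it to the domain analysis is misplaced.

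In short, your plan is sound and would yield a self-contained proof, whereas the paper simply outsources the Lipschitz case to the Prats--Saksman $T(1)$ theorem for Triebel--Lizorkin spaces on uniform domains. Their reference is more general (uniform domains, full Triebel--Lizorkin scale); your argument is more elementary and tailored to the specific square function $\nabla^s_\Omega$, but would require writing out the reflection and localization estimates carefully to be complete.
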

This characterization was first given by Stein in \cite{SteinBessel} in the case when $\Omega=\mathbb{R}^n$. For the case when $\Omega$ is a bounded Lipschitz domain we refer to \cite[Theorem 1.3]{Soler}, where this characterization is proved in the more general context of Triebel-Lizorkin spaces and so-called uniform domains.
\begin{remark} \normalfont
	In view of Proposition \ref{altcharBessel} and Proposition \ref{Sobolevrelations}, for any bounded Lipschitz domain $\Omega \subset \mathbb{R}^n$ and all $s \in (0,1)$, $p \in \left [2,\infty \right )$ we have the inclusions
	$$ H^{s,p}(\mathbb{R}^n) \subset H^{s,p}(\Omega | \mathbb{R}^n) \subset H^{s,p}(\Omega) \subset W^{s,p}(\Omega).$$
	In the case when $\Omega \subset \mathbb{R}^n$ is an arbitrary domain this implies the inclusions (\ref{relationsSob}) from the introduction.
\end{remark}
We also use the following standard embedding theorems of Bessel potential spaces. For precise references see \cite[section 3]{Me}.
\begin{theorem} \label{BesselEmbedding}
	Let $1<p \leq p_1 < \infty$, $s,s_1 \geq 0$ and assume that $\Omega \subset \mathbb{R}^n$ is a domain.
	\begin{enumerate}
		\item If $sp<n$, then for any $q \in [p,\frac{np}{n-sp}]$ we have
		$$ H^{s,p}(\Omega) \hookrightarrow L^{q}(\Omega).$$
		\item More generally, if $ s - \frac{n}{p} = s_1 - \frac{n}{p_1}, $ then
		$$ H^{s,p}(\Omega) \hookrightarrow H^{s_1,p_1}(\Omega).$$
		\item If $sp = n$, then for any $q \in [p,\infty)$ we have
		$$ H^{s,p}(\Omega) \hookrightarrow L^{q}(\Omega).$$
		\item If $sp > n$, then we have $$H^{s,p}(\Omega) \hookrightarrow C^\alpha(\Omega),$$
		where $\alpha = s-\frac{n}{p}$.
	\end{enumerate}
\end{theorem}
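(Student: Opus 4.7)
The plan is to reduce everything to the case $\Omega = \mathbb{R}^n$, where the embeddings follow from classical properties of the Bessel kernel together with the Hardy--Littlewood--Sobolev inequality, and then to transfer to arbitrary domains using the restriction definition of $H^{s,p}(\Omega)$.

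On $\mathbb{R}^n$, I would exploit the isometric identification $H^{s,p}(\mathbb{R}^n) = G_s * L^p(\mathbb{R}^n)$, where $G_s$ is the Bessel kernel characterized by $\widehat{G_s}(\xi) = (1+|\xi|^2)^{-s/2}$, so that any $u \in H^{s,p}(\mathbb{R}^n)$ can be written as $u = G_s * g$ with $||g||_{L^p(\mathbb{R}^n)} = ||u||_{H^{s,p}(\mathbb{R}^n)}$. The relevant kernel estimates are $G_s(x) \leq C|x|^{s-n}$ as $|x| \to 0$ (for $0 < s < n$) together with exponential decay at infinity. To prove part (2), one uses the semigroup identity $G_s = G_{s_1} * G_{s-s_1}$ (arising from the multiplicativity of $(1+|\xi|^2)^{-s/2}$ in $s$) to reduce the claim to showing that convolution with $G_{s-s_1}$ maps $L^p(\mathbb{R}^n)$ into $L^{p_1}(\mathbb{R}^n)$ whenever $\frac{1}{p} - \frac{1}{p_1} = \frac{s-s_1}{n}$, which in turn follows from the Hardy--Littlewood--Sobolev inequality applied to $G_{s-s_1}$, whose local singularity matches that of the Riesz kernel of order $s - s_1$. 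Part (1) at the endpoint $q = np/(n-sp)$ is the specialization $s_1 = 0$ of (2), and the intermediate exponents $q \in [p, np/(n-sp)]$ follow by interpolating with the trivial continuous inclusion $H^{s,p}(\mathbb{R}^n) \hookrightarrow L^p(\mathbb{R}^n)$.

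For the endpoint parts (3) and (4), somewhat different arguments are needed. In part (3), where $sp = n$, one applies Young's convolution inequality together with the sharp integrability properties of $G_s$ to obtain $L^q$ for every finite $q \geq p$ (a more precise Trudinger--Moser-type argument yields exponential integrability, but this is more than required). For part (4), when $sp > n$, one proves the pointwise estimate $|(G_s * g)(x) - (G_s * g)(y)| \leq C ||g||_{L^p(\mathbb{R}^n)} |x-y|^{s - n/p}$ by splitting the convolution integral into near and far contributions relative to $|x-y|$ and applying H\"older's inequality to each piece using the kernel bound, thereby yielding continuity and the correct H\"older exponent.

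Finally, the extension to a general domain $\Omega \subset \mathbb{R}^n$ is immediate from the definition: given $u \in H^{s,p}(\Omega)$, select an extension $v \in H^{s,p}(\mathbb{R}^n)$ with $||v||_{H^{s,p}(\mathbb{R}^n)} \leq 2 ||u||_{H^{s,p}(\Omega)}$, apply the relevant $\mathbb{R}^n$-embedding to $v$, and restrict back to $\Omega$. The main technical work is therefore concentrated in the Bessel kernel estimates on $\mathbb{R}^n$, where the endpoint cases $sp = n$ and $sp > n$ are slightly delicate but entirely classical; all the relevant facts are contained in Theorem 5 of Chapter V of Stein's book on singular integrals and the material that follows it, so for the purposes of this paper it suffices to invoke that reference rather than to reprove the embeddings from scratch.
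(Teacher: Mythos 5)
Your proposal is correct, and it agrees with the paper's treatment: the paper itself offers no proof of this result and simply refers the reader to the standard references (ultimately Theorem 5 of Chapter V of Stein's book and its sequel, as you yourself note at the end). The Bessel-kernel/semigroup/Hardy--Littlewood--Sobolev sketch you give is the standard classical argument on $\mathbb{R}^n$, and your reduction to $\mathbb{R}^n$ via the restriction definition of $H^{s,p}(\Omega)$ (pick an extension with comparable norm, embed, restrict) is exactly the right way to pass to arbitrary domains.
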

\section{Some preliminary estimates}
For the rest of this paper, we fix real numbers $s \in (0,1)$ and $\lambda \geq 1$.

\subsection{Tail estimates}
The following Lemma relates the tails of a function to the $L^2$ norm of its $s$-gradient. For a proof we refer to \cite[Lemma 4.1]{Me}.
\begin{lemma} \label{tailestimate}
	For all $r,R>0$ and any $u \in H^{s,2}(B_R | \mathbb{R}^n)$ we have
	\begin{equation} \label{cool}
	\int_{\mathbb{R}^n \setminus B_r} \frac{u(y)^2}{|y|^{n+2s}}dy \leq C(||\nabla^s u||_{L^2(B_R)}^2+||u||_{L^2(B_R)}^2) ,
	\end{equation}
	where $C=C(n,s,r,R)>0$.
\end{lemma}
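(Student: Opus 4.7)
The plan is to exploit the fact that $u \in H^{s,2}(B_R|\mathbb{R}^n)$ controls not only the interior $L^2$ data of $u$ and $\nabla^s u$ on $B_R$, but also, through the full-space integral appearing in $\nabla^s u(x)$, the values of $u$ on all of $\mathbb{R}^n$. The core geometric observation is the following: setting $\rho := \min\{R, r/2\}$, for any $x \in B_\rho$ and any $y \in \mathbb{R}^n \setminus B_r$, the reverse triangle inequality gives $|x-y| \geq |y| - |x| \geq |y|/2$, hence
$$\frac{1}{|y|^{n+2s}} \leq \frac{2^{n+2s}}{|x-y|^{n+2s}}.$$
This lets me trade the weight $|y|^{-(n+2s)}$ appearing in the tail integral for the weight $|x-y|^{-(n+2s)}$ appearing in $\nabla^s u(x)$.

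Combining this pointwise weight comparison with the elementary inequality $u(y)^2 \leq 2(u(x)-u(y))^2 + 2u(x)^2$, I would obtain, for every $x \in B_\rho$ at which $\nabla^s u(x) < \infty$, a pointwise-in-$x$ bound of the form
$$\int_{\mathbb{R}^n \setminus B_r} \frac{u(y)^2}{|y|^{n+2s}} dy \leq 2^{n+2s+1} \nabla^s u(x)^2 + C(n,s)\, r^{-2s} u(x)^2,$$
where the second constant absorbs the finite radial integral $\int_{\mathbb{R}^n \setminus B_r} |y|^{-(n+2s)} dy$. The point is that the left-hand side does not depend on $x$, so this is a genuine pointwise majorization of a fixed quantity by the values of $\nabla^s u$ and $u$ at the point $x$.

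Finally, integrating this pointwise estimate over $x \in B_\rho$ and dividing by $|B_\rho|$ produces
$$\int_{\mathbb{R}^n \setminus B_r} \frac{u(y)^2}{|y|^{n+2s}} dy \leq \frac{2^{n+2s+1}}{|B_\rho|} \|\nabla^s u\|_{L^2(B_\rho)}^2 + \frac{C(n,s)\, r^{-2s}}{|B_\rho|} \|u\|_{L^2(B_\rho)}^2,$$
and since $B_\rho \subset B_R$ we may enlarge the domains of integration to $B_R$, giving (\ref{cool}) with a constant $C=C(n,s,r,R)$ whose only dependence on $r$ and $R$ enters through $r^{-2s}$ and the factor $|B_{\min\{R,r/2\}}|^{-1}$.

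I do not anticipate any genuine obstacle: the argument is a clean application of the reverse triangle inequality together with the split $u(y)^2 \leq 2(u(y)-u(x))^2 + 2u(x)^2$. In particular, no a priori knowledge of tail finiteness is needed — this is precisely the conclusion of the lemma — because the bound is derived pointwise for almost every $x \in B_\rho$ at which $\nabla^s u(x)$ is finite, and such $x$ form a set of positive measure by virtue of $u \in H^{s,2}(B_R|\mathbb{R}^n)$. The only minor case distinction is whether $R < r/2$ or not, both handled uniformly by the choice $\rho = \min\{R, r/2\}$.
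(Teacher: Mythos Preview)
Your argument is correct; the paper does not supply its own proof of this lemma but refers to \cite[Lemma~4.1]{Me}, and the approach you outline---comparing weights via $|x-y|\geq |y|/2$ for $x\in B_{\min\{R,r/2\}}$, $y\notin B_r$, splitting $u(y)^2\leq 2(u(x)-u(y))^2+2u(x)^2$, and then averaging in $x$---is exactly the standard proof one expects there. One small remark: the cleanest way to justify the averaging step is to invoke Tonelli's theorem directly on the nonnegative integrand, which sidesteps any need to first isolate a single good point $x$.
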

Finally, the following result can be proved in the same way as \cite[Corollary 4.4]{Me}, by using the $L^\infty$ estimate from \cite[Theorem 2.11]{MeH} instead of the one from \cite[Theorem 4.2]{Me}. It shows that that if a function satisfies a homogeneous nonlocal equation, then the tails of its $s$-gradient can be controlled nicely, so that we can focus on estimating the local part of the $s$-gradient.
\begin{proposition} \label{taillinf}
	Consider a kernel coefficient $A \in \mathcal{L}_0(\lambda)$ and assume that $\Phi$ satisisfies (\ref{PhiLipschitz}) and (\ref{PhiMonotone}). Then for all $0<r<R$ and any weak solution $u \in H^{s,2}(B_R | \mathbb{R}^n)$ of the equation 
	$$ L_A^\Phi u = 0 \text{ in } B_R,$$
	we have the estimate
	\begin{equation} \label{gradest45}
	||\nabla^s_{\mathbb{R}^n \setminus B_R} u||_{L^\infty(B_r)} \leq C||\nabla^s u||_{L^2(B_R)},
	\end{equation}
	where $C=C(n,s,r,R,\lambda)>0$.
\end{proposition}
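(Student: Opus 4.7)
The plan is to mirror the structure of \cite[Corollary 4.4]{Me}, replacing the old $L^\infty$ input by \cite[Theorem 2.11]{MeH}. The key observation is that $L_A^\Phi$ depends on $u$ only through the differences $u(x)-u(y)$, so adding a constant preserves the equation. Hence, setting $\tilde u := u - \overline u_{B_R}$, the function $\tilde u$ is still a weak solution of $L_A^\Phi \tilde u = 0$ in $B_R$, while $\nabla^s \tilde u = \nabla^s u$ pointwise. This reduction lets us replace bounds on $u$ by bounds on $\tilde u$ for free, which is essential because the target inequality only tolerates $\|\nabla^s u\|_{L^2(B_R)}$ on the right-hand side.

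Next, fix $x \in B_r$. Since $|x-y| \geq R-r$ for $y \in \mathbb{R}^n \setminus B_R$, and $|x-y| \geq \tfrac{R-r}{R}|y|$ whenever $|y|\geq R$, the elementary inequality $(a-b)^2 \leq 2a^2 + 2b^2$ gives
$$
\nabla^s_{\mathbb{R}^n \setminus B_R} u(x)^2 \;\leq\; 2\tilde u(x)^2 \!\!\int_{\mathbb{R}^n \setminus B_R}\!\! \frac{dy}{|x-y|^{n+2s}} \;+\; 2\Bigl(\tfrac{R}{R-r}\Bigr)^{n+2s}\!\!\int_{\mathbb{R}^n \setminus B_R} \!\!\frac{\tilde u(y)^2}{|y|^{n+2s}}\,dy.
$$
The first factor is finite and depends only on $n,s,r,R$. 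The tail integral is controlled by Lemma \ref{tailestimate} (applied to $\tilde u$) by $C(\|\nabla^s \tilde u\|_{L^2(B_R)}^2 + \|\tilde u\|_{L^2(B_R)}^2)$, and Lemma \ref{Poincare} applied on $B_R$ yields $\|\tilde u\|_{L^2(B_R)}^2 \leq C R^{2s}\|\nabla^s u\|_{L^2(B_R)}^2$, since the double integral defining the Gagliardo seminorm over $B_R$ is dominated by $\|\nabla^s u\|_{L^2(B_R)}^2$. Thus the tail piece is already bounded by $C\|\nabla^s u\|_{L^2(B_R)}^2$ with $C=C(n,s,r,R)$.

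The one remaining step is to control $\tilde u(x)^2$ uniformly for $x\in B_r$. For this, I would apply the local $L^\infty$ bound \cite[Theorem 2.11]{MeH} to $\tilde u$ on an intermediate ball $B_\rho$ with $r<\rho<R$; that theorem yields $\|\tilde u\|_{L^\infty(B_r)}$ in terms of $\|\tilde u\|_{L^2(B_\rho)}$ and a weighted tail of $\tilde u$, both of which we have already dominated by $\|\nabla^s u\|_{L^2(B_R)}$ via Poincaré and Lemma \ref{tailestimate}. Combining the two pointwise bounds and taking the supremum over $x \in B_r$ produces the desired inequality with a constant depending only on $n,s,r,R,\lambda$. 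The main (and essentially only) non-cosmetic obstacle is verifying that the $L^\infty$ estimate from \cite{MeH} is formulated in a way that can be fed the Poincaré/tail bounds as above; once that input is in place the rest is essentially algebra, exactly as in the proof of \cite[Corollary 4.4]{Me}.
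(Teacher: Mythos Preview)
Your proposal is correct and follows essentially the same route the paper indicates: subtract the mean so that the equation is preserved, split $\nabla^s_{\mathbb{R}^n\setminus B_R}u(x)^2$ into a local piece and a tail, control the tail by Lemma~\ref{tailestimate} together with the Poincar\'e inequality (Lemma~\ref{Poincare}), and handle the local piece via the $L^\infty$ bound from \cite[Theorem~2.11]{MeH} applied on an intermediate ball. This is precisely the scheme of \cite[Corollary~4.4]{Me} with the new $L^\infty$ input substituted, and it mirrors the structure of the proof of Proposition~\ref{modC2sreg} in the present paper.
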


\subsection{Higher H\"older regularity}
In the basic case when $A \in \mathcal{L}_0(\lambda)$, it is known that any weak solution to a corresponding homogeneous nonlocal equation is locally $C^\alpha$ for some $\alpha>0$, cf. \cite[Theorem 1.2]{finnish}. 
The following result shows that if $A \in \mathcal{L}_0(\lambda)$ additionally satisfies the condition (\ref{contkernel}), then such weak solutions enjoy better H\"older regularity than in general.
\begin{proposition} \label{modC2sreg}
	Consider a kernel coefficient $A \in \mathcal{L}_0(\lambda)$ that satisfies the condition (\ref{contkernel}) in $B_5$ and suppose that $\Phi$ satisfies (\ref{PhiLipschitz}) and (\ref{PhiMonotone}) with respect to $\lambda$. Moreover, assume that $u \in H^{s,2}(B_5 | \mathbb{R}^n)$ is a weak solution of the equation $L_{A}^\Phi u = 0$ in $B_5$. Then for any $0<\alpha<\min\{2s,1\}$, we have
	$$ [u]_{C^{\alpha}(B_3)} \leq C ||\nabla^s u||_{L^2(B_5)} ,$$
	where $C=C(n,s,\lambda,\alpha)>0$ and 
	$$[u]_{C^{\alpha}(B_3)}:=\sup_{\substack{_{x,y \in B_3}\\{x \neq y}}} \frac{|u(x)-u(y)|}{|x-y|^{\alpha}}.$$
\end{proposition}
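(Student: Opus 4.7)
The plan is to reduce the statement to the higher H\"older regularity theorem from \cite{MeH}, and then absorb the right-hand side into the single quantity $\|\nabla^s u\|_{L^2(B_5)}$ by using the tail estimate (Lemma~\ref{tailestimate}) and the fractional Poincar\'e inequality (Lemma~\ref{Poincare}). The key observation to start is that the seminorm $[\cdot]_{C^{\alpha}(B_3)}$ annihilates constants, and that $\tilde u := u-\overline{u}_{B_5}$ is again a weak solution of $L_A^\Phi \tilde u = 0$ in $B_5$ (since $\Phi(\tilde u(x)-\tilde u(y))=\Phi(u(x)-u(y))$) with $\nabla^s \tilde u = \nabla^s u$. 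Hence it suffices to prove the estimate for $\tilde u$.

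Next I would invoke the higher H\"older regularity result from \cite{MeH} for homogeneous equations with kernel coefficients satisfying \eqref{contkernel}. Applied to $\tilde u$ on concentric balls $B_3 \subset B_4 \subset B_5$, this yields an estimate of the form
$$ [\tilde u]_{C^{\alpha}(B_3)} \leq C\Bigl(\|\tilde u\|_{L^{\infty}(B_4)} + \mathrm{Tail}(\tilde u)\Bigr) $$
for every $\alpha<\min\{2s,1\}$, where the tail is a weighted $L^2$ integral of $\tilde u$ over $\mathbb{R}^n\setminus B_4$ of the type appearing in Lemma~\ref{tailestimate}. To remove the $L^{\infty}$ piece I would use the $L^{\infty}$ bound from \cite[Theorem 2.11]{MeH}, which controls $\|\tilde u\|_{L^{\infty}(B_4)}$ by $\|\tilde u\|_{L^{2}(B_5)}$ plus an analogous tail term.

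To conclude, I would observe that the fractional Poincar\'e inequality of Lemma~\ref{Poincare} applied on $B_5$, combined with the trivial pointwise bound
$$ \int_{B_5}\int_{B_5}\frac{(\tilde u(x)-\tilde u(y))^2}{|x-y|^{n+2s}}\,dy\,dx \leq \int_{B_5}(\nabla^s u(x))^2\,dx, $$
gives $\|\tilde u\|_{L^2(B_5)} \leq C\|\nabla^s u\|_{L^2(B_5)}$. Feeding this bound into Lemma~\ref{tailestimate} (applied with $\tilde u$) then also controls every tail term appearing above in terms of $\|\nabla^s u\|_{L^2(B_5)}$ alone. Putting the three estimates together produces the desired inequality with a constant depending only on $n,s,\lambda$ and $\alpha$.

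The main obstacle is essentially bookkeeping rather than analytic: the deep ingredient---the gain of H\"older regularity from condition \eqref{contkernel}---is already encapsulated in \cite{MeH}, and our task is to verify that the particular radii ($B_3,B_4,B_5$) and tail norms match up so that all $L^{\infty}$, $L^{2}$ and tail quantities can ultimately be replaced by $\|\nabla^s u\|_{L^2(B_5)}$. The only subtlety is the initial reduction to a solution with vanishing mean so that Poincar\'e can be applied; here it is crucial that subtracting a constant preserves both the equation and $\nabla^s u$.
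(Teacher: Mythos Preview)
Your proposal is correct and follows essentially the same route as the paper: subtract the mean, invoke the higher H\"older estimate from \cite{MeH}, and absorb the $L^2$ and tail pieces via Lemma~\ref{Poincare} and Lemma~\ref{tailestimate}. The only difference is that the paper applies Theorem~\ref{HiHol} directly in its $L^2$ formulation (the right-hand side there already involves $\|u_0\|_{L^2(B_4)}$ and an $L^1$-type tail, which is converted to the $L^2$-tail via Cauchy--Schwarz), so the intermediate step through $\|\tilde u\|_{L^\infty(B_4)}$ and \cite[Theorem~2.11]{MeH} is unnecessary.
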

We will derive Proposition \ref{modC2sreg} from Theorem \ref{HiHol} below, which is proved in \cite[Theorem 1.1]{MeH}. In order to state the result, we need the following definitions. First, we define the tail space
$$L^1_{2s}(\mathbb{R}^n):= \left \{u \in L^1_{loc}(\mathbb{R}^n) \mathrel{\Big|} \int_{\mathbb{R}^n} \frac{|u(y)|}{1+|y|^{n+2s}}dy < \infty \right \}.$$
The most important property of this space is that for any function $u \in L^1_{2s}(\mathbb{R}^n)$, the quantity 
$$ \int_{\mathbb{R}^n \setminus B_{R}(x_0)} \frac{|u(y)|}{|x_0-y|^{n+2s}}dy$$
is finite for all $R>0$, $x_0 \in \mathbb{R}^n$. 
\begin{defin}
	We say that $u \in W^{s,2}_{loc}(\Omega) \cap L^1_{2s}(\mathbb{R}^n)$ is a local weak solution of the equation $L_A^\Phi u =0$ in $\Omega$, if 
	\begin{equation} \label{weaksolx}
	\mathcal{E}_A^\Phi(u,\varphi) =  0 \quad \forall \varphi \in H_c^{s,2}(\Omega).
	\end{equation}
\end{defin}

\begin{theorem} \label{HiHol}
	Let $\Omega \subset \mathbb{R}^n$ be a domain. Consider a kernel coefficient $A \in \mathcal{L}_0(\lambda)$ that satisfies the condition (\ref{contkernel}) in $\Omega$ and suppose that $\Phi$ satisfies (\ref{PhiLipschitz}) and (\ref{PhiMonotone}) with respect to $\lambda$. Moreover, assume that $u \in W^{s,2}_{loc}(\Omega) \cap L^1_{2s}(\mathbb{R}^n)$ is a local weak solution of the equation $L_{A}^\Phi u = 0$ in $\Omega$. Then for any $0<\alpha<\min \big \{2s,1 \big\}$, we have $u \in C^\alpha_{loc}(\Omega)$. \newline Furthermore, for all $R>0$, $x_0 \in \mathbb{R}^n$ such that $B_R(x_0) \Subset \Omega$ and any $\sigma \in (0,1)$, we have
	\begin{equation} \label{Hoeldest}
	[u]_{C^\alpha(B_{\sigma R}(x_0))} \leq \frac{C}{R^\alpha} \bigg ( R^{-\frac{n}{2}} ||u||_{L^2(B_R(x_0))} + R^{2s} \int_{\mathbb{R}^n \setminus B_{R}(x_0)} \frac{|u(y)|}{|x_0-y|^{n+2s}}dy \bigg ),
	\end{equation}
	where $C=C(n,s,\lambda,\alpha,\sigma)>0$.
\end{theorem}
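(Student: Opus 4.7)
The plan is to reduce Proposition \ref{modC2sreg} directly to Theorem \ref{HiHol} by (a) exploiting that both sides of the target estimate are invariant under replacing $u$ with $u-c$, so that we may normalize $u$ to have zero average on $B_5$, and (b) absorbing the $L^2$ norm and tail integral appearing on the right-hand side of (\ref{Hoeldest}) into $||\nabla^s u||_{L^2(B_5)}$ via the fractional Poincar\'e inequality and a Cauchy--Schwarz step combined with Lemma \ref{tailestimate}.

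First, I would observe that the operator $L_A^\Phi$, the seminorm $[u]_{C^\alpha(B_3)}$, and the function $\nabla^s u$ are all unchanged when $u$ is replaced by $u-c$ for $c \in \mathbb{R}$; the only non-obvious point is $L_A^\Phi(u-c) = L_A^\Phi u$, which holds because the constant cancels inside $\Phi(u(x)-u(y))$. So I may assume $\overline{u}_{B_5} = 0$. Next, I would verify that the hypotheses of Theorem \ref{HiHol} are satisfied with $\Omega = B_5$: the inclusions (\ref{relationsSob}) give $u \in H^{s,2}(B_5|\mathbb{R}^n) \subset W^{s,2}_{loc}(B_5)$, while a Cauchy--Schwarz splitting combined with Lemma \ref{tailestimate} (plus $u \in L^2(B_5) \subset L^1(B_1)$ near the origin) gives $u \in L^1_{2s}(\mathbb{R}^n)$; the assumption (\ref{contkernel}) on $B_5$ is exactly what is required.

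I would then apply Theorem \ref{HiHol} with $x_0 = 0$, $R = 4$ (so that $B_R \Subset B_5$) and $\sigma = 3/4$, obtaining
\begin{equation*}
[u]_{C^\alpha(B_3)} \leq C(n,s,\lambda,\alpha)\bigg(||u||_{L^2(B_4)} + \int_{\mathbb{R}^n \setminus B_4} \frac{|u(y)|}{|y|^{n+2s}}\,dy\bigg).
\end{equation*}
It remains to bound each term on the right by $||\nabla^s u||_{L^2(B_5)}$. Since $\overline{u}_{B_5} = 0$, Lemma \ref{Poincare} together with the pointwise inequality $\nabla^s_{B_5} u \leq \nabla^s u$ yields $||u||_{L^2(B_5)} \leq C\, ||\nabla^s u||_{L^2(B_5)}$. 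For the tail, Cauchy--Schwarz gives
\begin{equation*}
\int_{\mathbb{R}^n \setminus B_4} \frac{|u(y)|}{|y|^{n+2s}}\,dy \leq \bigg(\int_{\mathbb{R}^n \setminus B_4} \frac{u(y)^2}{|y|^{n+2s}}\,dy\bigg)^{1/2}\bigg(\int_{\mathbb{R}^n \setminus B_4} \frac{dy}{|y|^{n+2s}}\bigg)^{1/2},
\end{equation*}
and then Lemma \ref{tailestimate} (with $R=5$, $r=4$) combined with the Poincar\'e bound above controls this by $C\, ||\nabla^s u||_{L^2(B_5)}$. Assembling these estimates completes the proof.

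There is no real obstacle here: Theorem \ref{HiHol} does all of the heavy analytic work. The only genuinely non-mechanical ingredient is the constant-invariance of both sides, which is what makes the normalization $\overline{u}_{B_5} = 0$ legitimate; without it, the bare $L^2$ norm of $u$ appearing in (\ref{Hoeldest}) could not possibly be controlled by $||\nabla^s u||_{L^2(B_5)}$ alone.
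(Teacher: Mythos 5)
You have proved the wrong statement. The target is Theorem~\ref{HiHol}, the higher H\"older regularity theorem itself, but your first sentence announces a plan to ``reduce Proposition~\ref{modC2sreg} directly to Theorem~\ref{HiHol}'' — that is, you take Theorem~\ref{HiHol} as a black box and derive the simplified corollary Proposition~\ref{modC2sreg} from it. That derivation is indeed what the paper does for Proposition~\ref{modC2sreg}, and your steps there (normalizing $\overline{u}_{B_5}=0$, verifying $u \in L^1_{2s}(\mathbb{R}^n)$ via Lemma~\ref{tailest2}/\ref{tailestimate}, applying Theorem~\ref{HiHol}, then absorbing $\|u\|_{L^2}$ and the tail term into $\|\nabla^s u\|_{L^2(B_5)}$ with the fractional Poincar\'e inequality and Cauchy--Schwarz) are correct and match the paper's argument. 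But none of that constitutes a proof of Theorem~\ref{HiHol}; assuming the result one is asked to prove is circular.

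As for the statement you were actually given: the present paper does not prove Theorem~\ref{HiHol} at all — it imports it from \cite[Theorem 1.1]{MeH}. A genuine proof of Theorem~\ref{HiHol} is a substantial analytic undertaking: one must exploit the translation-continuity condition~(\ref{contkernel}) to approximate $L_A^\Phi$ locally at each scale by an operator with a translation-invariant kernel, prove an excess-decay / Campanato-type iteration for the resulting homogeneous problem, and propagate H\"older exponents up to the threshold $\min\{2s,1\}$ while controlling the nonlocal tails at each step. None of these ingredients appears in your proposal, so as an attempt at Theorem~\ref{HiHol} it has a gap that amounts to the entire argument.
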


In order to derive Proposition \ref{modC2sreg} from Theorem \ref{HiHol}, we need to ensure that as the terminology suggests, any weak solution as defined in the introduction is also a local weak solution. This is essentially a consequence of the following Lemma.
\begin{lemma} \label{tailest2}
	Let $R>0$ and $s \in (0,1)$. Then for any function $u \in H^{s,2}(B_R | \mathbb{R}^n)$ and any $R>0$, we have $u \in L^1_{2s}(\mathbb{R}^n)$ and
	$$ \int_{\mathbb{R}^n} \frac{|u(y)|}{1+|y|^{n+2s}} dy \leq C \left ( ||u||_{L^2(B_R)} +||\nabla^s u||_{L^2(B_R)} \right ) ,$$
	where $C=C(n,s,R)>0$. In particular, we have $H^{s,2}(B_R | \mathbb{R}^n) \subset L^1_{2s}(\mathbb{R}^n)$.
\end{lemma}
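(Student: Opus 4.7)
The plan is to split the integral at the boundary of $B_R$ and estimate the two parts separately using elementary bounds together with the tail estimate in Lemma \ref{tailestimate}.

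First I would handle the local part $\int_{B_R} \frac{|u(y)|}{1+|y|^{n+2s}}dy$. Since the denominator is bounded below by $1$ on all of $\mathbb{R}^n$, this integral is dominated by $\int_{B_R} |u(y)|dy$, which by the Cauchy--Schwarz inequality is bounded by $|B_R|^{1/2} \|u\|_{L^2(B_R)}$, a constant depending on $n,s,R$ times $\|u\|_{L^2(B_R)}$.

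For the tail part $\int_{\mathbb{R}^n \setminus B_R} \frac{|u(y)|}{1+|y|^{n+2s}}dy$, I would use that $1+|y|^{n+2s} \geq \tfrac{1}{2}|y|^{n+2s}$ whenever $|y| \geq R$ (up to adjusting the constant depending on $R$), reducing matters to bounding $\int_{\mathbb{R}^n \setminus B_R} \frac{|u(y)|}{|y|^{n+2s}}dy$. Splitting the weight as $|y|^{-(n+2s)/2} \cdot |y|^{-(n+2s)/2}$ and applying Cauchy--Schwarz gives
\begin{equation*}
\int_{\mathbb{R}^n \setminus B_R} \frac{|u(y)|}{|y|^{n+2s}}dy \leq \left(\int_{\mathbb{R}^n \setminus B_R} \frac{u(y)^2}{|y|^{n+2s}}dy\right)^{1/2} \left(\int_{\mathbb{R}^n \setminus B_R} \frac{1}{|y|^{n+2s}}dy\right)^{1/2}.
\end{equation*}
The second factor is a finite constant depending only on $n,s,R$ since $n+2s>n$, and the first factor is controlled by Lemma \ref{tailestimate} (applied with $r=R$), yielding a bound by $C(\|\nabla^s u\|_{L^2(B_R)} + \|u\|_{L^2(B_R)})$.

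Combining the two estimates gives the claimed inequality, and in particular the finiteness of the left-hand side shows $u \in L^1_{2s}(\mathbb{R}^n)$, so that the inclusion $H^{s,2}(B_R|\mathbb{R}^n) \subset L^1_{2s}(\mathbb{R}^n)$ follows. No real obstacle is expected here; the only subtlety is verifying the constants depend only on $n,s,R$, which is straightforward since Lemma \ref{tailestimate} already packages the dependence in precisely this form.
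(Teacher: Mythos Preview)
Your proposal is correct and follows essentially the same argument as the paper: split at $\partial B_R$, bound the inner piece by Cauchy--Schwarz, and control the outer piece by Cauchy--Schwarz together with Lemma~\ref{tailestimate}. The only cosmetic difference is that the paper uses the trivial bound $1+|y|^{n+2s}\geq |y|^{n+2s}$ directly rather than introducing a factor of $\tfrac12$.
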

\begin{proof}
	First of all, integration in polar coordinates yields
	\begin{equation} \label{intpolar1}
	\int_{\mathbb{R}^n \setminus B_R} \frac{dz}{|z|^{n+2s}}= C_1 R^{-2s},
	\end{equation}
	where $C_1=C_1(n,s)>0$.
	We split the integral in question as follows
	\begin{align*}
	\int_{\mathbb{R}^n} \frac{|u(y)|}{1+|y|^{n+2s}} dy \leq & \int_{B_R} |u(y)|dy + \int_{\mathbb{R}^n \setminus B_R} \frac{|u(y)|}{|y|^{n+2s}} dy \\
	\leq & C_2 \left ( \int_{B_R} |u(y)|^2dy \right)^\frac{1}{2} + C_3 \left ( \int_{\mathbb{R}^n \setminus B_R} \frac{|u(y)|^2}{|y|^{n+2s}} dy \right)^\frac{1}{2},
	\end{align*}
	where $C_2=C_2(n,R)>0$ and $C_3=C_3(n,s,R)>0$. Here we used the Cauchy-Schwarz inequality and (\ref{intpolar1}) in order to obtain the last inequality. In view of Lemma \ref{tailestimate}, we also have
	$$ \left ( \int_{\mathbb{R}^n \setminus B_R} \frac{|u(y)|^2}{|y|^{n+2s}} dy \right)^\frac{1}{2} \leq C_4 \left ( ||u||_{L^2(B_R)} +||\nabla^s u||_{L^2(B_R)} \right ),$$
	where $C_4=C_4(n,s,R)>0$. The claim now follows by combining the above two estimates.
\end{proof}

\begin{proof}[Proof of Proposition \ref{modC2sreg}]
	Since in view of Lemma \ref{tailest2} the function $u_0:=u- \overline u_{B_5} \in H^{s,2}(B_5 | \mathbb{R}^n) \subset W^{s,2}_{loc}(B_5) \cap L^1_{2s}(\mathbb{R}^n)$ is a local weak solution of 
	$$ L_A^\Phi u_0=0 \text{ in } B_5,$$ by Theorem \ref{HiHol}, (\ref{intpolar1}), Lemma \ref{tailestimate} and the fractional Poincar\'e inequality (Lemma \ref{Poincare}), we have
	\begin{align*}
	[u]_{C^{\alpha}(B_3)} & = [u_0]_{C^{\alpha}(B_3)} \\
	& \leq C_1 \left (||u_0||_{L^2(B_4)} + \int_{\mathbb{R}^n \setminus B_4} \frac{|u_0(y)|}{|y|^{n+2s}}dy \right )\\
	& \leq C_2 \left (||u_0||_{L^2(B_5)} + \left (\int_{\mathbb{R}^n \setminus B_4} \frac{|u_0(y)|^2}{|y|^{n+2s}}dy \right )^\frac{1}{2} \right )\\
	& \leq C_3 \left (||u_0||_{L^2(B_5)} + ||\nabla^s u||_{L^2(B_5)} \right ) \leq C_4||\nabla^s u||_{L^2(B_5)},
	\end{align*}
	where all constants depend only on $n,s,\lambda$ and $\alpha$. This finishes the proof.
\end{proof}

\section{The Dirichlet problem}
In this section, we are mainly concerned with the existence and uniqueness of weak solutions to nonlocal Dirichlet problems. Although in this paper we only use the existence of weak solutions in the space $H^{s,2}(\Omega |\mathbb{R}^n)$, for future reference and for the sake of generality we also include some other solution spaces. \newline
Throughout this section, we fix a bounded domain $\Omega \subset \mathbb{R}^n$ and let $X$ be a vector space that satisfies
\begin{equation} \label{spaces}
W^{s,2}(\mathbb{R}^n) \subset X \subset H^{s,2}(\Omega | \mathbb{R}^n).
\end{equation}
In particular, possible choices for $X$ are $X=H^{s,2}(\Omega | \mathbb{R}^n)$ and $X=W^{s,2}(\mathbb{R}^n)$.
\begin{defin}
	Suppose that $X$ satisfies (\ref{spaces}). Moreover, let $h \in X$ and $f \in L^\frac{2n}{n+2s}(\Omega)$. We say that $u \in X$ is a weak solution of the problem \begin{equation} \label{constcof31}
	\begin{cases} \normalfont
	L_A^\Phi u = f & \text{ in } \Omega \\
	u = h & \text{ a.e. in } \mathbb{R}^n \setminus \Omega,
	\end{cases}
	\end{equation}
	if we have
	$\mathcal{E}_A^\Phi(u,\varphi) = (f,\varphi)_{L^2(\Omega)}$ for all $\varphi \in W^{s,2}_0(\Omega)$
	and $u = h \text{ a.e. in } \mathbb{R}^n \setminus \Omega$.
\end{defin}

\begin{proposition} \label{Dirichlet}
	Let $\Omega \subset \mathbb{R}^n$ be a bounded domain and suppose that $X$ is a vector space that satisfies (\ref{spaces}). 
	Consider a kernel coefficient $A \in \mathcal{L}_0(\lambda)$ and suppose that $\Phi$ satisfies (\ref{PhiLipschitz}) and (\ref{PhiMonotone}). Moreover, let $h \in X$ and $f \in L^\frac{2n}{n+2s}(\Omega)$. Then there exists a unique weak solution $u \in X$ of the Dirichlet problem (\ref{constcof31}).
\end{proposition}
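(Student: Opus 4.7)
The plan is to use the Browder--Minty surjectivity theorem for monotone, coercive, hemicontinuous operators on a reflexive Banach space. First I would reduce to homogeneous exterior data by writing $u=w+h$ and seeking $w\in W^{s,2}_0(\Omega)$ satisfying $\mathcal{E}_A^\Phi(w+h,\varphi)=(f,\varphi)_{L^2(\Omega)}$ for all $\varphi\in W^{s,2}_0(\Omega)$. This reduction is legitimate because any function in $H^{s,2}(\Omega|\mathbb{R}^n)$ that vanishes outside $\Omega$ automatically lies in $W^{s,2}(\mathbb{R}^n)$, as one sees by splitting the Gagliardo integral of $u-h$ into the pieces over $\Omega\times\Omega$ and $\Omega\times\Omega^c$ and bounding each by $\|\nabla^s(u-h)\|^2_{L^2(\Omega)}$. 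Accordingly, define the nonlinear operator $T:W^{s,2}_0(\Omega)\to(W^{s,2}_0(\Omega))^*$ by
$$\langle Tw,\varphi\rangle:=\mathcal{E}_A^\Phi(w+h,\varphi)-(f,\varphi)_{L^2(\Omega)},$$
so that finding a weak solution amounts to solving $Tw=0$.

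Next I would verify the hypotheses of Browder--Minty. Well-definedness and boundedness of $T$ follow from $|\Phi(t)|\le\lambda|t|$ (a consequence of (\ref{PhiLipschitz}) and $\Phi(0)=0$), the Cauchy--Schwarz inequality applied after restricting the integration to the region where $\varphi(x)-\varphi(y)$ does not vanish, the fact that $w+h\in H^{s,2}(\Omega|\mathbb{R}^n)$, and the Sobolev embedding $W^{s,2}_0(\Omega)\hookrightarrow L^{2n/(n-2s)}(\Omega)$ (using $n>2s$) for the term involving $f$. The crucial structural step is strong monotonicity: writing $u_i=w_i+h$ and setting $a:=u_1(x)-u_1(y)$, $b:=u_2(x)-u_2(y)$, one observes $a-b=(w_1-w_2)(x)-(w_1-w_2)(y)$, so combining (\ref{PhiMonotone}) with (\ref{eq1}) gives
$$\langle Tw_1-Tw_2,w_1-w_2\rangle\ge\lambda^{-2}[w_1-w_2]^2_{W^{s,2}(\mathbb{R}^n)}.$$
Taking $w_2=0$ and applying the fractional Friedrichs--Poincar\'e inequality (Lemma \ref{Friedrichsx}) to absorb the $L^2$-part of the norm yields coercivity of $T$. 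Hemicontinuity follows from the continuity of $\Phi$ via the dominated convergence theorem, with the Lipschitz bound providing an integrable dominating function on every bounded interval of the scalar parameter.

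Browder--Minty then yields surjectivity of $T$, producing $w\in W^{s,2}_0(\Omega)$ with $Tw=0$, so that $u:=w+h\in X$ is a weak solution. Uniqueness is immediate from the strong monotonicity estimate: if $u_1,u_2$ are two weak solutions, then $w:=u_1-u_2\in W^{s,2}_0(\Omega)$ is admissible as a test function, and subtracting the two equations gives $0\ge\lambda^{-2}[w]^2_{W^{s,2}(\mathbb{R}^n)}$, forcing $w$ to be constant; since $w\equiv 0$ outside $\Omega$, we conclude $w\equiv 0$. I expect the main obstacle to be the careful bookkeeping in verifying boundedness and hemicontinuity, since the integration defining $\mathcal{E}_A^\Phi$ extends over all of $\mathbb{R}^n\times\mathbb{R}^n$ while $w+h$ only enjoys $H^{s,2}(\Omega|\mathbb{R}^n)$-regularity; one must split the integral into the pieces inside $\Omega\times\Omega$, across $\Omega$ and $\Omega^c$, and outside (which vanishes since $\varphi$ is supported in $\overline{\Omega}$), and use the defining finiteness of $\|\nabla^s(w+h)\|_{L^2(\Omega)}$ to control each piece.
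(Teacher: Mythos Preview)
Your proposal is correct and follows essentially the same route as the paper: reduce to finding $w\in W^{s,2}_0(\Omega)$, define a nonlinear operator into the dual, and invoke the Browder--Minty surjectivity theorem after verifying boundedness, monotonicity, coercivity, and a continuity property. The only cosmetic differences are that the paper verifies weak continuity (strong-to-weak$^*$) rather than hemicontinuity, and derives coercivity by a direct computation rather than from the strong monotonicity estimate with $w_2=0$; your shortcut via strong monotonicity is in fact slightly cleaner.
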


\begin{proof}
	We use an argument inspired by \cite{existence} based on the theory of monotone operators.
	Fix $h \in X$ and consider the operator $\mathcal{A}:W_0^{s,2}(\Omega) \to (W_0^{s,2}(\Omega))^\star$ defined by 
	\begin{align*}
	\langle \mathcal{A}(v),\varphi \rangle := \langle \mathcal{A}_1(v),\varphi \rangle + \langle \mathcal{A}_2(v),\varphi \rangle,
	\end{align*}
	where 
	$$ \langle \mathcal{A}_1(v),\varphi \rangle := \int_{\Omega} \int_{\mathbb{R}^n} \frac{A(x,y)}{|x-y|^{n+2s}} \Phi(v(x)+h(x)-v(y)-h(y))(\varphi(x)-\varphi(y))dydx $$
	and
	$$ \langle \mathcal{A}_2(v),\varphi \rangle := \int_{\mathbb{R}^n \setminus \Omega} \int_{\Omega} \frac{A(x,y)}{|x-y|^{n+2s}} \Phi(v(x)+h(x)-v(y)-h(y))(\varphi(x)-\varphi(y))dydx. $$
	Here by $(W_0^{s,2}(\Omega))^\star$ we denote the dual space of $W_0^{s,2}(\Omega)$ consisting of all bounded linear functionals on $W_0^{s,2}(\Omega)$.
	We split the further proof into a few observations. \newline
	\textbf{Observation 1: $\mathcal{A}$ is well-defined.} 
	Let us show that for any $v \in W_0^{s,2}(\Omega)$, $\mathcal{A}(v)$ is indeed a bounded linear functional and thus belongs to $(W_0^{s,2}(\Omega))^\star$. \newline
	For all $v,\varphi \in W_0^{s,2}(\Omega)$, by (\ref{eq1}), (\ref{PhiLipschitz}) and the Cauchy-Schwarz inequality we have 
	\begin{align*}
	|\langle \mathcal{A}(v),\varphi \rangle| \leq & \lambda^2 \int_{\mathbb{R}^n} \int_{\mathbb{R}^n} \frac{|v(x)-v(y)|}{|x-y|^{n+2s}} |\varphi(x)-\varphi(y)|dydx \\ & + 2 \lambda^2  \int_{\Omega} \int_{\mathbb{R}^n} \frac{|h(x)-h(y)|}{|x-y|^{n+2s}} |\varphi(x)-\varphi(y)|dydx \\
	\leq & \lambda^2  \left ( \int_{\mathbb{R}^n} \int_{\mathbb{R}^n} \frac{|v(x)-v(y)|^2}{|x-y|^{n+2s}} dydx \right)^\frac{1}{2} \left ( \int_{\mathbb{R}^n} \int_{\mathbb{R}^n} \frac{|\varphi(x)-\varphi(y)|^2}{|x-y|^{n+2s}} dydx \right)^\frac{1}{2} \\
	& + 2 \lambda^2 \left ( \int_{\Omega} \int_{\mathbb{R}^n} \frac{|h(x)-h(y)|^2}{|x-y|^{n+2s}} dydx \right)^\frac{1}{2} \left ( \int_{\mathbb{R}^n} \int_{\mathbb{R}^n} \frac{|\varphi(x)-\varphi(y)|^2}{|x-y|^{n+2s}} dydx \right)^\frac{1}{2} \\
	\leq & \lambda^2 (||v||_{W^{s,2}(\mathbb{R}^n)}+2||\nabla^s h||_{L^2(\Omega)}) ||\varphi||_{W^{s,2}(\mathbb{R}^n)}.
	\end{align*}
	Thus, since by (\ref{spaces}) we have $X \subset H^{s,2}(\Omega | \mathbb{R}^n)$ and therefore $\nabla^s h \in L^2(\Omega)$, $\mathcal{A}(v)$ is indeed a bounded linear functional and therefore belongs to $(W_0^{s,2}(\Omega))^\star$. \newline 
	\textbf{Observation 2: $\mathcal{A}$ is monotone.} 
	By (\ref{eq1}) and (\ref{PhiMonotone}), for all $v,w \in W_0^{s,2}(\Omega)$ we have
	\begin{align*}
	& \langle \mathcal{A}_1(v)-\mathcal{A}_1(w),v-w \rangle \\ = & \int_{\Omega} \int_{\mathbb{R}^n} \frac{A(x,y)}{|x-y|^{n+2s}} (\Phi(v(x)+h(x)-v(y)-h(y))-\Phi(w(x)+h(x)-w(y)-h(y))) \\ & \times ((v(x)+h(x)-v(y)-h(y))-(w(x)+h(x)-w(y)-h(y)))dydx \\
	\geq & \lambda^{-2} \int_{\Omega} \int_{\mathbb{R}^n} \frac{((v(x)-v(y))-(w(x)-w(y))^2}{|x-y|^{n+2s}} dydx \geq 0.
	\end{align*}
	By the same reasoning we also have 
	$ \langle \mathcal{A}_2(v)-\mathcal{A}_2(w),v-w \rangle \geq 0$
	and therefore $$ \langle \mathcal{A}(v)-\mathcal{A}(w),v-w \rangle \geq 0,$$
	so that $\mathcal{A}$ is monotone. \newline
	\textbf{Observation 3: $\mathcal{A}$ is weakly continuous.} Let $\{v_j\}_{j=1}^\infty$ be a sequence in $W_0^{s,2}(\Omega)$ that converges to some function $v \in W_0^{s,2}(\Omega)$ in $W^{s,2}(\mathbb{R}^n)$. By (\ref{eq1}), (\ref{PhiLipschitz}) and the Cauchy-Schwarz inequality, for any $\varphi \in W_0^{s,2}(\Omega)$ we obtain
	\begin{align*}
	& |\langle \mathcal{A}(v_j)-\mathcal{A}(v),\varphi \rangle| \\ \leq & \int_{\mathbb{R}^n} \int_{\mathbb{R}^n} \frac{A(x,y)}{|x-y|^{n+2s}} |\Phi(v_j(x)+h(x)-v_j(y)-h(y)) \\
	& -\Phi(v(x)+h(x)-v(y)-h(y))| |\varphi(x)-\varphi(y)|dydx \\
	\leq & \lambda^2 \int_{\mathbb{R}^n} \int_{\mathbb{R}^n} \frac{|(v_j(x)-v_j(y))-(v(x)-v(y))|}{|x-y|^{n+2s}} |\varphi(x)-\varphi(y)|dydx \\
	\leq & \lambda^2 \left (\int_{\mathbb{R}^n} \int_{\mathbb{R}^n} \frac{|(v_j(x)-v(x))-(v_j(y)-v(y))|^2}{|x-y|^{n+2s}} dydx \right )^\frac{1}{2} \\ & \times \left (\int_{\mathbb{R}^n} \int_{\mathbb{R}^n} \frac{|\varphi(x)-\varphi(y)|^2}{|x-y|^{n+2s}} dydx \right )^\frac{1}{2} \\
	\leq & \lambda^2 ||v_j-v||_{W^{s,2}(\mathbb{R}^n)} ||\varphi||_{W^{s,2}(\mathbb{R}^n)} \xrightarrow{j \to \infty} 0.
	\end{align*}
	Therefore, we obtain
	$$ \lim_{j \to \infty} \langle \mathcal{A}(v_j)-\mathcal{A}(v),\varphi \rangle = 0,$$
	which means that $\mathcal{A}$ is weakly continuous. \newline
	\textbf{Observation 4: $\mathcal{A}$ is coercive.} By (\ref{eq1}), (\ref{PhiMonotone}) and (\ref{PhiLipschitz}), for any $v \in W_0^{s,2}(\Omega)$ we have
	\begin{align*}
	& \langle \mathcal{A}_1(v),v \rangle = \langle \mathcal{A}_1(v),v+h \rangle - \langle \mathcal{A}_1(v),h \rangle \\
	= & \int_{\Omega} \int_{\mathbb{R}^n} \frac{A(x,y)}{|x-y|^{n+2s}} \Phi(v(x)+h(x)-v(y)-h(y)) (v(x)+h(x)-v(y)-h(y))dydx \\
	& - \int_{\Omega} \int_{\mathbb{R}^n} \frac{A(x,y)}{|x-y|^{n+2s}} \Phi(v(x)+h(x)-v(y)-h(y)) (h(x)-h(y))dydx \\
	\geq & \lambda^{-2} \bigg( \underbrace{\frac{1}{2} \int_{\mathbb{R}^n} \int_{\mathbb{R}^n} \frac{(v(x)-v(y))^2}{|x-y|^{n+2s}} dydx}_{=:J_1} \\ & \underbrace{ -\int_{\Omega} \int_{\mathbb{R}^n} \frac{|v(x)-v(y)||h(x)-h(y)|}{|x-y|^{n+2s}} dydx}_{=:J_2} - \int_{\Omega} \int_{\mathbb{R}^n} \frac{(h(x)-h(y))^2}{|x-y|^{n+2s}} dydx \bigg ).
	\end{align*}
	By using Lemma \ref{Friedrichsx}, we estimate $J_1$ further from below as follows
	\begin{align*}
	J_1 \geq & \frac{C_1^{-1}|\Omega|^{-\frac{2s}{n}}}{4} ||v||_{L^2(\mathbb{R}^n)}^2 +  \frac{1}{4} \int_{\mathbb{R}^n} \int_{\mathbb{R}^n} \frac{(v(x)-v(y))^2}{|x-y|^{n+2s}} dydx \\ \geq & c ||v||_{W^{s,2}(\mathbb{R}^n)}^2,
	\end{align*}
	where $C_1=C_1(n,s)>0$ is given by Lemma \ref{Friedrichsx} and $c=c(n,s,\lambda,|\Omega|)>0$. 
	By the Cauchy-Schwarz inequality, for $J_2$ we have
	$$ J_2 \geq - \int_{\Omega} \int_{\mathbb{R}^n} \frac{|h(x)-h(y)||v(x)-v(y)|}{|x-y|^{n+2s}} dydx \geq - ||\nabla^s h||_{L^2(\Omega)} ||v||_{W^{s,2}(\mathbb{R}^n)}.$$
	Since by a similar reasoning as above we have
	\begin{align*}
	& \langle \mathcal{A}_2(v),v \rangle = \langle \mathcal{A}_2(v),v+h \rangle - \langle \mathcal{A}_2(v),h \rangle \\
	\geq & \lambda^{-2} \bigg( \int_{\mathbb{R}^n \setminus \Omega} \int_{\Omega} \frac{(v(x)-v(y))^2}{|x-y|^{n+2s}} dydx \\ & -\int_{\mathbb{R}^n \setminus \Omega} \int_{\Omega} \frac{|v(x)-v(y)||h(x)-h(y)|}{|x-y|^{n+2s}} dydx - \int_{\mathbb{R}^n \setminus \Omega} \int_{\Omega} \frac{(h(x)-h(y))^2}{|x-y|^{n+2s}} dydx \bigg ) \\
	\geq & \lambda^{-2} \bigg( -\int_{\Omega} \int_{\mathbb{R}^n} \frac{|v(x)-v(y)||h(x)-h(y)|}{|x-y|^{n+2s}} dydx - \int_{\Omega} \int_{\mathbb{R}^n} \frac{(h(x)-h(y))^2}{|x-y|^{n+2s}} dydx \bigg ),
	\end{align*}
	by combining the last four displays we obtain
	$$ \langle \mathcal{A}(v),v \rangle \geq \lambda^{-2} \left ( c ||v||_{W^{s,2}(\mathbb{R}^n)}^2 - 2 ||\nabla^s h||_{L^2(\Omega)} ||v||_{W^{s,2}(\mathbb{R}^n)} - 2||\nabla^s h||_{L^2(\Omega)}^2 \right ).$$
	Therefore, we conclude that
	\begin{align*}
	& \frac{\langle \mathcal{A}(v),v \rangle}{||v||_{W^{s,2}(\mathbb{R}^n)}} \\ \geq & \lambda^{-2} \left (c ||v||_{W^{s,2}(\mathbb{R}^n)} - 2 ||\nabla^s h||_{L^2(\Omega)} - 2 \frac{||\nabla^s h||_{L^2(\Omega)}^2}{||v||_{W^{s,2}(\mathbb{R}^n)}} \right ) \xrightarrow{||v||_{W^{s,2}(\mathbb{R}^n)} \to + \infty} + \infty, 
	\end{align*}
	which proves that $\mathcal{A}$ is coercive. \newline
	Therefore, since by Remark \ref{Hilbert} $W_0^{s,2}(\Omega)$ is a separable Hilbert space and thus in particular a separable reflexive Banach space, and by the above observations $\mathcal{A}$ is monotone, weakly continuous and coercive, by the standard theory of monotone operators (see e.g. \cite[Corollary 2.2]{Mon}), the operator $\mathcal{A}$ is surjective. Therefore, it remains to prove that the linear functional $$\varphi \mapsto (f,\varphi)_{L^2(\Omega)}, \quad \varphi \in W_0^{s,2}(\Omega)$$ belongs to $(W_0^{s,2}(\Omega))^\star$. Indeed, by H\"older's inequality and the fractional Sobolev inequality (cf. \cite[Theorem 6.5]{Hitch}), for any $\varphi \in W_0^{s,2}(\Omega)$ we have
	\begin{align*}
	(f,\varphi)_{L^2(\Omega)} \leq & \left ( \int_{\Omega} |f(x)|^\frac{2n}{n+2s}dx \right )^\frac{n+2s}{2n} \left (\int_{\mathbb{R}^n} |\varphi(x)|^\frac{2n}{n-2s} dx \right )^\frac{n-2s}{2n} \\
	\leq & C_2 ||f||_{L^\frac{2n}{n+2s}(\Omega)} ||\varphi||_{W^{s,2}(\mathbb{R}^n)},
	\end{align*}
	where $C_2=C_2(n,s)>0$. Therefore, the above functional is indeed bounded and thus belongs to $(W_0^{s,2}(\Omega))^\star$. Hence, by the surjectivity of $\mathcal{A}$ there exists some $v \in W_0^{s,2}(\Omega)$ such that $\langle \mathcal{A}(v),\varphi \rangle = (f,\varphi)_{L^2(\Omega)}$ for all $\varphi \in W_0^{s,2}(\Omega)$. Since by (\ref{spaces}) we have $W_0^{s,2}(\Omega) \subset W^{s,2}(\mathbb{R}^n) \subset X$, we in particular have $v \in X$. Since also $h \in X$ and $X$ is a vector space, the function $u:=v+h$ also belongs to $X$ and satisfies
	\begin{align*}
	(f,\varphi)_{L^2(\Omega)} = & \int_{\Omega} \int_{\mathbb{R}^n} \frac{A(x,y)}{|x-y|^{n+2s}} \Phi(u(x)-u(y))(\varphi(x)-\varphi(y))dydx \\
	& + \int_{\mathbb{R}^n \setminus \Omega} \int_{\Omega} \frac{A(x,y)}{|x-y|^{n+2s}} \Phi(u(x)-u(y))(\varphi(x)-\varphi(y))dydx \\
	= & \int_{\mathbb{R}^n} \int_{\mathbb{R}^n} \frac{A(x,y)}{|x-y|^{n+2s}} \Phi(u(x)-u(y))(\varphi(x)-\varphi(y))dydx,
	\end{align*}
	for all $\varphi \in W_0^{s,2}(\Omega)$. Here we used that $\varphi$ vanishes outside of $\Omega$ in order to obtain the last equality. Since by construction we also have $u=h$ a.e. in $\mathbb{R}^n \setminus \Omega$, $u$ is a weak solution of the Dirichlet problem (\ref{constcof31}). \newline
	Let us prove that this weak solution is unique. Assume that $u_1,u_2 \in X$ both solve the Dirichlet problem (\ref{constcof31}) weakly, so that we have $$u_1-u_2=h-h=0 \text{ in } \mathbb{R}^n \setminus \Omega.$$ Since moreover by (\ref{spaces}) the function $u_1-u_2$ belongs to $H^{s,2}(\Omega | \mathbb{R}^n)$, we clearly have $u_1-u_2 \in W_0^{s,2}(\Omega)$. Therefore, we can use $u_1-u_2$ as a test function in (\ref{constcof31}) for both $u_1$ and $u_2$, so that by subtracting the resulting equalities, along with (\ref{eq1}), (\ref{PhiMonotone}) and Lemma \ref{Friedrichsx} we obtain
	\begin{align*}
	0 = & \int_{\mathbb{R}^n} \int_{\mathbb{R}^n} \frac{A(x,y)}{|x-y|^{n+2s}} (\Phi(u_1(x)-u_1(y))-\Phi(u_2(x)-u_2(y))) \\ & \times ((u_1(x)-u_2(x))-(u_1(y)-u_2(y)))dydx \\
	\geq & \lambda^{-2} \int_{\mathbb{R}^n} \int_{\mathbb{R}^n} \frac{((u_1(x)-u_1(y))-(u_2(x)-u_2(y))^2}{|x-y|^{n+2s}} dydx \\
	\geq & \lambda^{-2} C_1^{-1} |\Omega|^{-\frac{2s}{n}} ||u_1-u_2||_{L^2(\mathbb{R}^n)}^2 \geq 0.
	\end{align*}
	This implies that $||u_1-u_2||_{L^2(\mathbb{R}^n)}=0$ and therefore $u_1 = u_2$ a.e., so that there is exactly one weak solution to the Dirichlet problem (\ref{constcof31}) that belongs to $X$.
\end{proof}

\section{Higher integrability of $\nabla^s u$}
For the rest of this paper, we fix some kernel coefficient $A \in \mathcal{L}_0(\lambda)$ and some function $\Phi:\mathbb{R} \to \mathbb{R}$ satisfying $\Phi(0)=0$, (\ref{PhiLipschitz}) and (\ref{PhiMonotone}). Moreover, we fix some $f \in L^2(B_6)$ and a measurable symmetric function $g:\mathbb{R}^n \times \mathbb{R}^n \to \mathbb{R}$ with $\nabla^s g \in L^2(B_6)$. In addition, for notational clarity we define
$$ Lg:= p.v. \int_{\mathbb{R}^n} \frac{g(x,y)}{|x-y|^{n+2s}} dy,$$
so that the function $F$ defined in (\ref{F}) has the form $F=Lg+f$. \newline
A crucial tool for the proof of the higher integrability of $\nabla^s u$ is given by the following approximation lemma, which shows that any weak solution $u$ of the equation (\ref{nonlocaleq}) is in some sense locally close to a weak solution of a corresponding homogeneous equation that satisfies the H\"older estimate from Proposition \ref{modC2sreg}.
\begin{lemma} \label{apppl}
	Let $M>0$ and assume that $A$ satisfies the condition (\ref{contkernel}) in $B_5$. Then for any $\varepsilon_0 \in (0,1)$, there exists some $\delta = \delta (\varepsilon_0,n,s,\lambda,M) >0$, such that for any weak solution $u \in H^{s,2}(B_5 | \mathbb{R}^n)$ of the equation 
	\begin{equation} \label{eqap}
	L_A^\Phi u =  L g + f \text{ in } B_5
	\end{equation}
	under the assumptions that $A$ satisfies (\ref{contkernel}) in $B_5$, that
	\begin{equation} \label{conddddd}
	\dashint_{B_5} |\nabla^s u|^2 dx \leq M
	\end{equation}
	and that
	\begin{equation} \label{condddddd}
	\dashint_{B_5} \left ( f^{2}+  |\nabla^s g|^2 \right ) dx \leq M\delta^2,
	\end{equation}
	there exists a weak solution $v \in H^{s,2}(B_5 | \mathbb{R}^n)$ of the equation
	\begin{equation} \label{constcof}
	L_A^\Phi v = 0 \text{ in } B_5
	\end{equation}
	that satisfies
	\begin{equation} \label{L2esti}
	||\nabla^s(u-v)||_{L^2(B_5)} \leq \varepsilon_0
	\end{equation}
	and the estimate
	\begin{equation} \label{localAS}
	||\nabla^s v||_{L^\infty(B_2)} \leq N_0 
	\end{equation}
	for some constant $N_0=N_0(n,s,\lambda,M)$.
\end{lemma}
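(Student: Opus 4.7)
The plan is to construct $v$ as the unique weak solution in $H^{s,2}(B_5|\mathbb{R}^n)$ to the Dirichlet problem
\[
L_A^\Phi v = 0 \ \text{in } B_5, \qquad v = u \ \text{a.e.\ in } \mathbb{R}^n \setminus B_5,
\]
whose existence and uniqueness is supplied by Proposition \ref{Dirichlet} applied with $X = H^{s,2}(B_5 | \mathbb{R}^n)$, $h = u$, and right-hand side zero. By construction the difference $\varphi := u - v$ vanishes outside $B_5$ and lies in $W_0^{s,2}(B_5)$, so it is admissible as a test function in both weak formulations (after a standard density approximation by $H_c^{s,2}(B_5)$ functions if needed).

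For the closeness estimate (\ref{L2esti}) I would subtract the two weak formulations tested against $\varphi$ to obtain
\[
\mathcal{E}_A^\Phi(u,\varphi) - \mathcal{E}_A^\Phi(v,\varphi) = \mathcal{E}(g,\varphi) + (f,\varphi)_{L^2(B_5)}.
\]
The monotonicity (\ref{PhiMonotone}) of $\Phi$ together with the ellipticity of $A$ give the lower bound $\lambda^{-2}\|\nabla^s \varphi\|_{L^2(B_5)}^2$ on the left-hand side, using that $\varphi \equiv 0$ outside $B_5$ so that the Gagliardo-type double integral of $(\varphi(x)-\varphi(y))^2 |x-y|^{-n-2s}$ over $\mathbb{R}^n \times \mathbb{R}^n$ dominates $\|\nabla^s \varphi\|_{L^2(B_5)}^2$. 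On the right-hand side, Cauchy--Schwarz combined with the support of $\varphi$ (splitting according to whether $x$ or $y$ lies in $B_5$) yields $|\mathcal{E}(g,\varphi)| \leq C \|\nabla^s g\|_{L^2(B_5)} \|\nabla^s \varphi\|_{L^2(B_5)}$, while the Friedrichs--Poincar\'e inequality (Lemma \ref{Friedrichsx}) and Cauchy--Schwarz give $|(f,\varphi)_{L^2(B_5)}| \leq C \|f\|_{L^2(B_5)} \|\nabla^s \varphi\|_{L^2(B_5)}$. Dividing by $\|\nabla^s \varphi\|_{L^2(B_5)}$ and invoking the smallness hypothesis (\ref{condddddd}) gives $\|\nabla^s \varphi\|_{L^2(B_5)} \leq C(n,s,\lambda)\, \delta \sqrt{M}$, so (\ref{L2esti}) follows by taking $\delta$ small enough.

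For the uniform bound (\ref{localAS}) I exploit that $v$ solves the homogeneous equation on $B_5$ with a kernel satisfying (\ref{contkernel}). Pick any $\alpha \in (s, \min\{2s,1\})$, possible since $s > 0$. Proposition \ref{modC2sreg} then gives $[v]_{C^\alpha(B_3)} \leq C \|\nabla^s v\|_{L^2(B_5)}$. For $x \in B_2$, split
\[
(\nabla^s v(x))^2 = \int_{B_3} \frac{(v(x)-v(y))^2}{|x-y|^{n+2s}}\,dy + \int_{\mathbb{R}^n \setminus B_3} \frac{(v(x)-v(y))^2}{|x-y|^{n+2s}}\,dy.
\]
The first integral is controlled by $[v]_{C^\alpha(B_3)}^2 \int_{B_3} |x-y|^{2\alpha-n-2s}\,dy \leq C [v]^2_{C^\alpha(B_3)}$ because $2\alpha > 2s$, and the second is bounded uniformly in $x \in B_2$ by $C\|\nabla^s v\|_{L^2(B_5)}^2$ via Proposition \ref{taillinf} (applied to $v$ on $B_3$ with inner radius $2$). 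Finally, the triangle inequality $\|\nabla^s v\|_{L^2(B_5)} \leq \|\nabla^s u\|_{L^2(B_5)} + \|\nabla^s \varphi\|_{L^2(B_5)} \leq C\sqrt{M} + \varepsilon_0$ (assuming $\varepsilon_0 \leq 1$) yields an $N_0 = N_0(n,s,\lambda,M)$, proving (\ref{localAS}).

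The main technical obstacle is the careful bookkeeping for $\mathcal{E}(g,\varphi)$: since $g$ is only controlled through $\nabla^s g \in L^2(B_5)$ while the defining double integral lives on $\mathbb{R}^n \times \mathbb{R}^n$, one must leverage the vanishing of $\varphi$ outside $B_5$ to confine at least one variable to $B_5$ before applying Cauchy--Schwarz; analogous care is needed to ensure that the lower bound coming from monotonicity is expressible in terms of $\|\nabla^s \varphi\|_{L^2(B_5)}$ and not of a strictly larger global energy that we have not assumed control over.
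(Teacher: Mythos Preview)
Your proposal is correct and follows essentially the same route as the paper: construct $v$ via the Dirichlet problem from Proposition~\ref{Dirichlet}, test both equations with $\varphi=u-v\in W_0^{s,2}(B_5)$, use monotonicity/ellipticity for the lower bound and Cauchy--Schwarz plus the Friedrichs--Poincar\'e inequality for the right-hand side to get (\ref{L2esti}), then split $\nabla^s v(x)$ at $\partial B_3$ and combine Proposition~\ref{modC2sreg} (with exponent $\alpha>s$) and Proposition~\ref{taillinf} to obtain (\ref{localAS}). The only cosmetic difference is that the paper parametrizes the H\"older exponent as $s+\gamma$ with $0<\gamma<\min\{s,1-s\}$ rather than picking $\alpha\in(s,\min\{2s,1\})$ directly, which is equivalent.
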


\begin{proof} 
	Fix $\varepsilon_0 \in (0,1)$ and let $\delta>0$ to be chosen.
	Let $v \in H^{s,2}(B_5 | \mathbb{R}^n)$ be the unique weak solution of the problem
	\begin{equation} \label{constcof3}
	\begin{cases} \normalfont
	L_A^\Phi v = 0 & \text{ weakly in } B_5 \\
	v = u & \text{ a.e. in } \mathbb{R}^n \setminus B_5,
	\end{cases}
	\end{equation}
	note that $v$ exists by Proposition \ref{Dirichlet}. In view of (\ref{PhiMonotone}), (\ref{PhiLipschitz}), (\ref{eq1}) and using $w:=u-v \in W_0^{s,2}(B_5)$ as a test function in (\ref{constcof3}) and (\ref{eqap}), we obtain
	\begin{align*}
	||\nabla^s w||_{L^2(B_5)}^2 \leq & \lambda \int_{\mathbb{R}^n} \int_{\mathbb{R}^n} A(x,y) \frac{((u(x)-u(y))-(v(x)-v(y)))^2}{|x-y|^{n+2s}}dydx \\
	\leq & \lambda^2 \bigg ( \int_{\mathbb{R}^n} \int_{\mathbb{R}^n} A(x,y) \frac{\Phi(u(x)-u(y))(w(x)-w(y))}{|x-y|^{n+2s}}dydx \\
	& - \underbrace{\int_{\mathbb{R}^n} \int_{\mathbb{R}^n} A(x,y) \frac{ \Phi(u(x)-u(y))(w(x)-w(y))}{|x-y|^{n+2s}}dydx}_{=0} \bigg ) \\
	= & \lambda^2 \bigg ( \underbrace{\int_{\mathbb{R}^n} \int_{\mathbb{R}^n} \frac{g(x,y)(w(x)-w(y))}{|x-y|^{n+2s}}dydx}_{=:I_1} \\
	& + \underbrace{\int_{B_5} f(x)w(x)dx}_{=:I_2} \bigg ).
	\end{align*}
	By the Cauchy-Schwarz inequality and taking into account that $w=0$ in $\mathbb{R}^n \setminus B_5$, for $I_1$ we have
	\begin{align*}
	I_1 \leq & 2 \int_{B_5} \int_{\mathbb{R}^n} \frac{|g(x,y)||w(x)-w(y)|}{|x-y|^{n+2s}}dydx \\
	\leq & 2 ||\nabla^s g||_{L^2(B_5)} ||\nabla^s w||_{L^2(B_5)},
	\end{align*}
	while by additionally using Lemma \ref{Friedrichsx}, we deduce
	\begin{align*}
	I_2 \leq ||f||_{L^2(B_5)} ||w||_{L^2(B_5)} \leq C_1 ||f||_{L^2(B_5)} ||\nabla^s w||_{L^2(B_5)}.
	\end{align*}
	where $C_1=C_1(n,s)>0$. Therefore, by combining the last three displays we arrive at
	\begin{equation} \label{L2first}
	\begin{aligned}
	||\nabla^s (u-v)||_{L^2(B_5)}^2 \leq & C_2 \left ( ||\nabla^s g||_{L^2(B_5)}^2 + ||\nabla^s f||_{L^2(B_5)}^2 \right ) \\ \leq & 2C_2 |B_5|M \delta^2 \leq \varepsilon_0^2 ,
	\end{aligned}
	\end{equation}
	where the last inequality follows by choosing $\delta$ sufficiently small and $C_2=C_2(n,s,\lambda)>0$.
	This completes the proof of $(\ref{L2esti})$. \newline
	Let us now proof the estimate $(\ref{localAS})$. 
	For almost every $x \in B_2$, by Proposition \ref{taillinf} we have
	$$ \int_{\mathbb{R}^n \setminus B_3} \frac{(v(x)-v(y))^2}{|x-y|^{n+2s}}dy \leq C_3 \int_{B_3} \int_{\mathbb{R}^n} \frac{(v(z)-v(y))^2}{|x-y|^{n+2s}}dydz,$$
	where $C_3=C_3(n,s,\lambda)$.
	Now choose $\gamma>0$ small enough such that $\gamma < s$ and $s + \gamma <1$.
	In view of the assumption that $A$ satisfies (\ref{contkernel}) in $B_5$, by Proposition \ref{modC2sreg} we have 
	$$[v]_{C^{s+\gamma}(B_3)} \leq C_{4} ||\nabla^s v||_{L^2(B_5)} $$
	for some constant $C_{4}=C_{4}(n,s,\lambda,\gamma)$.
	Thus, for almost every $x \in B_2$ we obtain 
	\begin{align*}
	\int_{B_3} \frac{(v(x)-v(y))^2}{|x-y|^{n+2s}}dy & \leq [v]_{C^{s+\gamma}(B_3)}^2 \int_{B_3} \frac{dy}{|x-y|^{n-2\gamma}} \\
	& = C_{5}  [v]_{C^{s+\gamma}(B_3)}^2 \leq C_6 \int_{B_5} \int_{\mathbb{R}^n} \frac{(v(z)-v(y))^2}{|x-y|^{n+2s}}dydz,
	\end{align*}
	where $C_{5}=C_{5}(n,\gamma)<\infty$ and $C_6=C_6(n,s,\lambda,\gamma)>0$.
	By combining the above estimates, along with (\ref{L2first}) and (\ref{conddddd}) we conclude that for almost every $x \in B_2$ we have
	\begin{align*}
	(\nabla^s v)^2 (x) & = \int_{\mathbb{R}^n \setminus B_3} \frac{(v(x)-v(y))^2}{|x-y|^{n+2s}}dy + \int_{B_3} \frac{(v(x)-v(y))^2}{|x-y|^{n+2s}}dy\\
	& \leq C_7 \int_{B_5} \int_{\mathbb{R}^n} \frac{(v(z)-v(y))^2}{|x-y|^{n+2s}}dydz \\
	& \leq 2C_7 \left (\int_{B_5} \int_{\mathbb{R}^n} \frac{(u(z)-u(y))^2}{|x-y|^{n+2s}}dydz + \int_{B_5} \int_{\mathbb{R}^n} \frac{(w(z)-w(y))^2}{|x-y|^{n+2s}}dydz \right ) \\
	& \leq 2C_7 \left (\int_{B_5} \int_{\mathbb{R}^n} \frac{(u(z)-u(y))^2}{|x-y|^{n+2s}}dydz + \varepsilon_0^2 \right ) \leq 2C_7 (|B_5|M+1),
	\end{align*}
	where $C_7=C_7(n,s,\gamma,\lambda)$. Therefore, (\ref{localAS}) holds with $N_0=(2C_7 (|B_5|M+1))^{\frac{1}{2}}$.
\end{proof}

The following result is an application of the above approximation lemma and roughly speaking shows that if the maximal functions of $\nabla^s u$, $\nabla^s f$ and $\nabla^s g$ are small enough in some point, then the set where the maximal function of $\nabla^s u$ is large has to be very small.
\begin{lemma} \label{mfuse}
	There is a constant $N_1=N_1(n,s,\lambda) > 1$, such that the following is true. If $A$ satisfies the condition (\ref{contkernel}) in $B_6$, then for any $\varepsilon > 0$ there exists some $\delta = \delta(\varepsilon,n,s,\lambda) > 0$, 
	such that for any $z \in B_1$, any $r \in (0,1]$ and any weak solution $ u \in H^{s,2}(B_{5r}(z) | \mathbb{R}^n)$ 
	of the equation 
	$$ L_A^\Phi u =  Lg + f \text { in } B_{5r}(z)$$
	with
	\begin{align*}
	& \left \{ x \in B_r(z) \mid \mathcal{M}_{B_6} (|\nabla^s u|^2)(x) \leq 1 \right \} \\ & \cap \left \{ x \in B_r(z) \mid \mathcal{M}_{B_6} \left (|f|^{2} +  |\nabla^s g|^2 \right )(x) \leq \delta^2 \right \} \neq \emptyset,
	\end{align*}
	we have  
	\begin{equation} \label{ccll}
	\left | \left \{ x \in B_r(z) \mid \mathcal{M}_{B_6} (|\nabla^s u|^2)(x) > N_1^2 \right \} \right | < \varepsilon |B_r|.
	\end{equation}
\end{lemma}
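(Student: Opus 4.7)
The plan is to rescale the equation from $B_{5r}(z)$ to the fixed domain $B_5$, apply the approximation Lemma \ref{apppl} to produce a comparison solution $v$ which is $L^2$-close to $u$ and for which $\nabla^s v$ is uniformly bounded, and then combine this with the weak $(1,1)$ estimate for the Hardy--Littlewood maximal function to bound the bad set. The crucial choice is a rescaling that preserves the $s$-gradient pointwise, so that the universal $L^\infty$ bound produced by Lemma \ref{apppl} does not pick up a factor of $r^{-s}$ when transferred back.

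Concretely, I will introduce $\tilde u(x) := r^{-s} u(rx+z)$, $\tilde g(x,y) := r^{-s} g(rx+z,ry+z)$, $\tilde f(x) := r^{s} f(rx+z)$, $\tilde A(x,y) := A(rx+z,ry+z)$, and the auxiliary nonlinearity $\tilde \Phi(t) := r^{-s} \Phi(r^{s} t)$. A short computation shows that $\tilde \Phi$ again satisfies (\ref{PhiLipschitz}) and (\ref{PhiMonotone}) with the same constant $\lambda$, that $\tilde u$ solves $L_{\tilde A}^{\tilde \Phi} \tilde u = L \tilde g + \tilde f$ in $B_5$, and crucially that $|\nabla^s \tilde u|(x) = |\nabla^s u|(rx+z)$ (and analogously for $\tilde g$). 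Using the point $x_0 \in B_r(z)$ supplied by the hypothesis together with the inclusion $B_{5r}(z) \subset B_{6r}(x_0) \cap B_6$, the two maximal-function bounds at $x_0$ translate into $\dashint_{B_5}|\nabla^s \tilde u|^2 \leq M$ and $\dashint_{B_5}(|\tilde f|^2 + |\nabla^s \tilde g|^2) \leq 2 M \delta^2$ with $M := (6/5)^n$, which matches the hypothesis of Lemma \ref{apppl} up to a harmless factor of $\sqrt{2}$ in $\delta$.

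For a parameter $\varepsilon_0 > 0$ to be fixed at the end, Lemma \ref{apppl} then produces $\tilde v \in H^{s,2}(B_5 | \mathbb{R}^n)$ with $L_{\tilde A}^{\tilde\Phi} \tilde v = 0$ in $B_5$, $\|\nabla^s \tilde v\|_{L^\infty(B_2)} \leq N_0$ and $\|\nabla^s(\tilde u - \tilde v)\|_{L^2(B_5)} \leq \varepsilon_0$, where $N_0 = N_0(n,s,\lambda)$ since $M$ is universal. Inspecting the proof of Lemma \ref{apppl} one sees that the same monotonicity argument combined with $A \geq \lambda^{-1}$ and Lemma \ref{Friedrichsx} actually yields $\|\nabla^s(\tilde u - \tilde v)\|_{L^2(\mathbb{R}^n)} \leq C \varepsilon_0$, thanks to $\tilde u - \tilde v \in W_0^{s,2}(B_5)$. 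Undoing the scaling via $v(y) := r^{s} \tilde v((y-z)/r)$ gives $\|\nabla^s v\|_{L^\infty(B_{2r}(z))} \leq N_0$ and $\int_{\mathbb{R}^n}|\nabla^s(u-v)|^2 \leq C r^n \varepsilon_0^2$. The decisive intermediate step is then to establish a uniform bound
\[
\mathcal{M}_{B_6}(|\nabla^s v|^2)(x) \leq C_1 \qquad \text{for every } x \in B_r(z),
\]
with $C_1 = C_1(n,s,\lambda)$: for radii $\rho \leq r$ one has $B_\rho(x) \subset B_{2r}(z)$ and the $L^\infty$ bound gives an average $\leq N_0^2$; for $\rho > r$, the decomposition $|\nabla^s v|^2 \leq 2|\nabla^s u|^2 + 2|\nabla^s(u-v)|^2$ combined with $B_\rho(x) \subset B_{3\rho}(x_0)$ and $\mathcal{M}_{B_6}(|\nabla^s u|^2)(x_0) \leq 1$ bounds the first average by a dimensional constant, while the contribution from $|\nabla^s(u-v)|^2$ is controlled by $\int_{\mathbb{R}^n}|\nabla^s(u-v)|^2/|B_\rho| \leq C \varepsilon_0^2$.

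Finally I set $N_1^2 := 8 C_1$. From $|\nabla^s u|^2 \leq 2|\nabla^s v|^2 + 2|\nabla^s(u-v)|^2$ and the uniform bound above, any $x \in B_r(z)$ with $\mathcal{M}_{B_6}(|\nabla^s u|^2)(x) > N_1^2$ must satisfy $\mathcal{M}_{B_6}(|\nabla^s(u-v)|^2)(x) > N_1^2/4$. The weak $(1,1)$ estimate of Proposition \ref{Maxfun} then yields
\[
\Big|\big\{x \in B_r(z) : \mathcal{M}_{B_6}(|\nabla^s u|^2)(x) > N_1^2\big\}\Big| \leq \frac{C}{N_1^2} \int_{B_6}|\nabla^s(u-v)|^2 \leq \frac{C r^n \varepsilon_0^2}{N_1^2},
\]
which is $\leq \varepsilon |B_r|$ once $\varepsilon_0$ is chosen small enough in terms of $\varepsilon, n, s$ and $\lambda$; the corresponding $\delta$ is then dictated by Lemma \ref{apppl}. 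The main obstacle is identifying the correct rescaling: the naive choice $\tilde u(x) = u(rx+z)$ introduces an $r^{-s}$ factor in $\|\nabla^s v\|_{L^\infty}$ and destroys the uniform bound on $\mathcal{M}_{B_6}(|\nabla^s v|^2)$; the rescaling $\tilde u = r^{-s}u(rx+z)$ used above removes this at the price of the auxiliary nonlinearity $\tilde \Phi$, whose Lipschitz and monotonicity constants must be checked to match those of $\Phi$.
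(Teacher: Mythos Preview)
Your proof is correct and follows essentially the same route as the paper: the same $r^{-s}$-rescaling to $B_5$, the same application of Lemma \ref{apppl} with $M=(6/5)^n$, the same small/large-radius dichotomy, and the same weak $(1,1)$ endgame. The only organizational difference is that the paper bounds $\mathcal{M}_{B_6}(|\nabla^s u|^2)$ directly---using $|\nabla^s \widetilde u|^2 \leq 2|\nabla^s(\widetilde u - \widetilde v)|^2 + 2|\nabla^s \widetilde v|^2$ for small radii and the hypothesis at $x_0$ for large radii---whereas you first establish the uniform bound $\mathcal{M}_{B_6}(|\nabla^s v|^2)\leq C_1$ on $B_r(z)$ and then transfer back to $u$; this forces you to invoke the global bound $\|\nabla^s(u-v)\|_{L^2(\mathbb{R}^n)}^2 \leq C r^n \varepsilon_0^2$ (which indeed follows since $\widetilde u - \widetilde v \in W^{s,2}_0(B_5)$), a step the paper avoids. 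Your explicit introduction of $\widetilde\Phi(t)=r^{-s}\Phi(r^s t)$ is in fact more careful than the paper, which silently keeps the same $\Phi$ after rescaling.
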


\begin{proof}
	Let $\varepsilon_0 \in (0,1)$ and $M>0$ to be chosen and consider the corresponding $\delta = \delta(\varepsilon_0,n,s,\lambda,M) > 0$ given by Lemma $\ref{apppl}$. 
	Fix $r \in (0,1]$ and $z \in \mathbb{R}^n$. Define 
	\begin{align*}
	\widetilde A (x,y):= A (rx+z,ry+z), \quad \widetilde u (x) := r^{-s} u(rx+z),\\
	\widetilde g (x,y) := r^{-s} g(rx+z,ry+z), \quad \widetilde f (x) := r^s f(rx+z)
	\end{align*}
	and note that under the above assumptions 
	$\widetilde A$ belongs to the class $\mathcal{L}_0(\lambda)$ and satisfies the condition (\ref{contkernel}) in $B_5$, and that $\widetilde u \in H^{s,2}(B_5 | \mathbb{R}^n)$ satisfies 
	$$ L_{\widetilde A}^\Phi \widetilde u =  L \widetilde g + \widetilde f \text { weakly in } B_5. $$
	Therefore, by Lemma \ref{apppl} there exists a weak solution $\widetilde v \in H^{s,2}(B_5 | \mathbb{R}^n)$ of 
	$$ L_{\widetilde A}\widetilde v = 0 \text{ in } B_5 $$
	such that 
	\begin{equation} \label{apss}
	\int_{B_2} |\nabla^s(\widetilde u -\widetilde v)|^2 dx \leq \varepsilon_0^2,
	\end{equation}
	provided that the conditions $(\ref{conddddd})$ and $(\ref{condddddd})$ are satisfied.
	By assumption, there exists a point $x \in B_r(z)$ such that 
	$$\mathcal{M}_{B_6} (|\nabla^s u|^2)(x) \leq 1, \quad \mathcal{M}_{B_6} \left (|f|^{2} +  |\nabla^s g|^2 \right )(x) \leq \delta^2.$$
	By the scaling and translation invariance of the Hardy-Littlewood maximal function (Lemma $\ref{scale}$), for $x_0:=\frac{x-z}{r} \in B_1$ we thus have
	$$ \mathcal{M}_{B_{6/r}(-z)} (|\nabla^s \widetilde u|^2)(x_0) = \mathcal{M}_{B_6} (|\nabla^s u|^2)(x) \leq 1$$
	and
	$$ \mathcal{M}_{B_{6/r}(-z)} \left (|\widetilde f|^{2} +  |\nabla^s \widetilde g|^2 \right )(x_0) = \mathcal{M}_{B_6} \left (r^{2s}|f|^{2} +  |\nabla^s g|^2 \right )(x) \leq \delta^2.$$
	Therefore, for any $\rho>0$ we have 
	\begin{equation} \label{lum}
	\dashint_{B_\rho(x_0)} |\nabla^s \widetilde u|^2 dx \leq 1, \quad \dashint_{B_\rho(x_0)} \left ( |\widetilde f|^{2}+  |\nabla^s \widetilde g|^2 \right ) dx \leq \delta^2,
	\end{equation}
	where the values of $\nabla^s \widetilde u$, $\nabla^s \widetilde g$ and $\widetilde f$ outside of $B_{6/r}(-z)$ are replaced by $0$, which we also do for the rest of the proof.
	Since $B_5 \subset B_6(x_0)$, by $(\ref{lum})$ we have 
	$$
	\dashint_{B_5} |\nabla^s \widetilde u|^2 dx \leq \frac{|B_6|}{|B_5|} \text{ } \dashint_{B_6(x_0)} |\nabla^s \widetilde u|^2 dx \leq \left (\frac{6}{5} \right )^n
	$$
	and
	$$
	\dashint_{B_5} \left ( |\widetilde f|^2 +  |\nabla^s \widetilde g|^2 \right ) dx \leq \frac{|B_6|}{|B_5|} \text{ } \dashint_{B_6(x_0)} \left ( |\widetilde f|^2 +  |\nabla^s \widetilde g|^2 \right ) dx \leq \left (\frac{6}{5} \right )^n \delta^2.
	$$
	Since also $B_5 \subset B_{6/r}(-z)$, we obtain that $\widetilde u$, $\widetilde g$ and $\widetilde f$ satisfy 
	the conditions $(\ref{conddddd})$ and $(\ref{condddddd})$ with $M=\left (\frac{6}{5} \right )^n$. Therefore, (\ref{apss}) is satisfied by $\widetilde u$ and the corresponding approximate solution $\widetilde v$. Considering the function $v \in H^{s,2}(B_6|\mathbb{R}^n)$ given by $v(x):=r^{s} \widetilde v \left (\frac{x-z}{r} \right)$ and rescaling back yields 
	\begin{equation} \label{apss91}
	\int_{B_{2r}(y)} |\nabla^s(u -v)|^2dx = r^n \int_{B_2} |\nabla^s(\widetilde u -\widetilde v)|^2dx \leq \varepsilon_0^2 r^n.
	\end{equation}
	By Lemma $\ref{apppl}$, there is a constant $N_0=N_0(n,s, \lambda) >0$ such that 
	\begin{equation} \label{loclinf}
	||\nabla^s \widetilde v||_{L^\infty(B_2)}^2 \leq N_0^2 . 
	\end{equation}
	Next, we define $N_1 := (\max \{ 4 N_0^2, 3^n \})^{1/2} > 1$ and claim that 
	\begin{equation} \label{inclusion}
	\begin{aligned}
	& \left \{ x \in B_1 \mid \mathcal{M}_{B_{6/r}(-z)} ( |\nabla^s \widetilde u|^2 )(x) > N_1^2 \right \} \\ \subset & \left \{ x \in B_1 \mid \mathcal{M}_{B_2} ( |\nabla^s(\widetilde u -\widetilde v)|^2 )(x) > N_0^2 \right \}. 
	\end{aligned}
	\end{equation}
	In order to see this, assume that 
	\begin{equation} \label{menge}
	x_1 \in \left \{ x \in B_1 \mid \mathcal{M}_{B_2} ( |\nabla^s(\widetilde u -\widetilde v)|^2 ) (x) \leq N_0^2 \right \}. 
	\end{equation}
	For $ \rho < 1$, we have $B_\rho (x_1) \subset B_1(x_1) \subset B_2$, so that together with $(\ref{menge})$ and $(\ref{loclinf})$ we deduce 
	\begin{align*}
	\dashint_{B_\rho (x_1)} |\nabla^s \widetilde u|^2 dx & \leq 2 \text{ } \dashint_{B_\rho(x_1)} \left ( |\nabla^s (\widetilde u -\widetilde v)|^2 + |\nabla^s \widetilde v|^2 \right )dx \\
	& \leq 2 \text{ } \dashint_{B_\rho(x_1)} |\nabla^s (\widetilde u -\widetilde v)|^2 dx + 2 \text{ } ||\nabla^s \widetilde v||_{L^\infty(B_\rho(x_1))}^2 \\
	& \leq 2 \text{ } \mathcal{M}_{B_2} (|\nabla^s (\widetilde u -\widetilde v)|^2) (x_1) + 2 \text{ } ||\nabla^s \widetilde v||_{L^\infty(B_2)}^2 \leq 4 N_0^2. 
	\end{align*}
	On the other hand, for $\rho \geq 1$ we have $ B_\rho (x_1) \subset B_{3 \rho}(x_0)$, so that $(\ref{lum})$ implies 
	$$
	\dashint_{B_\rho(x_1)} |\nabla^s \widetilde u|^2 dx \leq \frac{|B_{3 \rho}|}{|B_\rho|} \dashint_{B_{3 \rho} (x_0)} |\nabla^s \widetilde u|^2 dx \leq 3^n.
	$$
	Thus, we have $$ x_1 \in \left \{ x \in B_1 \mid \mathcal{M}_{B_{6/r}(-z)}( |\nabla^s \widetilde u|^2) (x) \leq N_1^2 \right \} ,$$ 
	which implies $(\ref{inclusion})$. In view of Lemma \ref{scale}, (\ref{inclusion}) is equivalent to
	\begin{equation} \label{inclusion91}
	\begin{aligned}
	& \left \{ x \in B_r(z) \mid \mathcal{M}_{B_6} ( |\nabla^s u|^2 )(x) > N_1^2 \right \} \\ \subset & \left \{ x \in B_r(z) \mid \mathcal{M}_{B_{2r}(z)} ( |\nabla^s (u -v)|^2 )(x) > N_0^2 \right \}.
	\end{aligned}
	\end{equation}
	For any $\varepsilon > 0$, using $(\ref{inclusion91})$, the weak $1$-$1$ estimate from Proposition \ref{Maxfun} and $(\ref{apss91})$, we conclude that there exists some constant $C_1=C_1(n)>0$ such that 
	\begin{align*}
	& \left | \left \{ x \in B_r(z) \mid \mathcal{M}_{B_6} ( |\nabla^s u|^2)(x) >N_1^2 \right \} \right | \\
	\leq & \left |  \left \{ x \in B_r(z) \mid \mathcal{M}_{B_{2r}(z)} ( |\nabla^s (u -v)|^2 )(x) > N_0^2 \right \} \right | \\
	\leq & \frac{C_1}{N_0^2} \int_{B_{2r}(z)} |\nabla^s(u -v)|^2 dx \\
	\leq & \frac{C_1}{N_0^2} \varepsilon_0^2 r^n = \frac{C_2}{N_0^2} \varepsilon_0^2 |B_r| < \varepsilon |B_r|,
	\end{align*}
	where $C_2=C_2(n)>0$ and the last inequality is obtained by choosing $\varepsilon_0$ and thus also $\delta$ sufficiently small. This finishes our proof.
\end{proof}

\begin{corollary} \label{applic}
	There is a constant $N_1=N_1(n,s,\lambda) > 1$, such that the following is true. If $A$ satisfies the condition (\ref{contkernel}) in $B_6$, then for any $\varepsilon > 0$ there exists some $\delta = \delta(\varepsilon,n,s,\lambda) > 0$, 
	such that for any $z \in B_1$, any $r \in (0,1)$ and any weak solution $ u \in H^{s,2}(B_6 | \mathbb{R}^n)$ 
	of the equation 
	$$ L_A^\Phi u =  L g + f \text { in } B_6$$
	with
	\begin{equation} \label{Lvv}
	\left | \left \{ x \in B_r(z) \mid \mathcal{M}_{B_6} (|\nabla^s u|^2)(x) > N_1^2 \right \} \cap B_1 \right | \geq \varepsilon |B_r|,
	\end{equation}
	we have
	\begin{equation} \label{Lvv1}
	\begin{aligned}
	B_r(z) \cap B_1 \subset & \left \{ x \in B_1 \mid \mathcal{M}_{B_6} (|\nabla^s u|^2)(x) > 1 \right \} \\
	& \cup \left \{ x \in B_1 \mid \mathcal{M}_{B_6} \left (|f|^{2} +  |\nabla^s g|^2 \right )(x) > \delta^2 \right \}.
	\end{aligned}
	\end{equation}
\end{corollary}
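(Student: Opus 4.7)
The statement is the standard Vitali-type contrapositive packaging of Lemma \ref{mfuse}, so my plan is to derive it directly from that lemma by contradiction. I will take the $N_1$ produced by Lemma \ref{mfuse} and, for a given $\varepsilon>0$, the corresponding $\delta=\delta(\varepsilon,n,s,\lambda)$; these are exactly the parameters Corollary \ref{applic} claims exist.

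\textbf{Main step.} Fix $z \in B_1$, $r \in (0,1)$, and a weak solution $u \in H^{s,2}(B_6 | \mathbb{R}^n)$ satisfying (\ref{Lvv}). Argue by contradiction: suppose the inclusion (\ref{Lvv1}) fails. Then there exists some
$$x_0 \in B_r(z) \cap B_1$$
such that
$$\mathcal{M}_{B_6}(|\nabla^s u|^2)(x_0) \leq 1 \quad \text{and} \quad \mathcal{M}_{B_6}\!\left(|f|^{2}+|\nabla^s g|^2\right)(x_0) \leq \delta^2.$$
In particular, the intersection of the two sets appearing in the hypothesis of Lemma \ref{mfuse} is nonempty (it contains $x_0$). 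Since $u$ is a weak solution of $L_A^\Phi u = Lg + f$ on $B_6 \supset B_{5r}(z)$, and the continuity condition (\ref{contkernel}) holds in $B_6$ and hence in $B_{5r}(z)$, all hypotheses of Lemma \ref{mfuse} are satisfied for the ball $B_r(z)$ with this $z$ and $r$.

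\textbf{Conclusion.} Applying Lemma \ref{mfuse} with the same $\varepsilon$ and the associated $\delta$, we obtain
$$\left| \left\{ x \in B_r(z) \mid \mathcal{M}_{B_6}(|\nabla^s u|^2)(x) > N_1^2 \right\} \right| < \varepsilon |B_r|.$$
But intersecting with $B_1$ only decreases the measure, so this contradicts the assumption (\ref{Lvv}). Hence (\ref{Lvv1}) must hold.

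\textbf{Anticipated difficulty.} There is essentially no substantial obstacle here; the entire content of the argument is already encoded in Lemma \ref{mfuse}, and the only care required is to check that the hypotheses of Lemma \ref{mfuse} (the equation being satisfied on $B_{5r}(z)$ with $A$ continuous in the sense of (\ref{contkernel}) on that ball) follow from the assumptions of the corollary, which is immediate since $B_{5r}(z) \subset B_6$ for $z \in B_1$ and $r \in (0,1)$.
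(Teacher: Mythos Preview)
Your proof is correct and follows essentially the same argument as the paper's own proof: both take $N_1$ and $\delta$ from Lemma \ref{mfuse}, argue by contradiction to produce a point $x_0 \in B_r(z)\cap B_1$ at which both maximal functions are small, observe that $B_{5r}(z)\subset B_6$ so Lemma \ref{mfuse} applies, and derive the measure bound that contradicts (\ref{Lvv}). There is nothing to add.
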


\begin{proof}
	Let $N_1=N_1(n,s,\lambda)>1$ be given by Lemma $\ref{mfuse}$.
	Fix $\varepsilon > 0$, $r \in (0,1)$, $z \in \mathbb{R}^n$ and consider the corresponding $\delta= \delta (\varepsilon,n,s,\lambda)>0$ given by Lemma $\ref{mfuse}$.
	We now argue by contradiction.
	Assume that $(\ref{Lvv})$ is satisfied but that $(\ref{Lvv1})$ is false, so that there exists some $x_0 \in B_r(z) \cap B_1$ such that 
	\begin{align*} 
	x_0 \in B_r(z) & \cap \left \{ x \in B_1 \mid \mathcal{M}_{B_{6}} (|\nabla^s u|^2)(x) \leq 1 \right \} \\
	& \cap \left \{ x \in B_1 \mid \mathcal{M}_{B_6} \left (|f|^{2} +  |\nabla^s g|^2 \right )(x) \leq \delta^2 \right \} \\
	\subset &  \left \{ x \in B_r(z) \mid \mathcal{M}_{B_6} (|\nabla^s u|^2)(x) \leq 1 \right \} \\
	& \cap \left \{ x \in B_r(z) \mid \mathcal{M}_{B_6} \left (|f|^{2} +  |\nabla^s g|^2 \right )(x) \leq \delta^2 \right \} .
	\end{align*}
	Since in addition we have $B_{5r}(z) \subset B_6$, by Lemma $\ref{mfuse}$ we arrive at
	\begin{align*}
	& \left | \left \{ x \in B_r(z) \mid \mathcal{M}_{B_6} (|\nabla^s u|^2)(x) > N_1^2 \right \} \cap B_1 \right | \\
	\leq & \left | \left \{ x \in B_r(z) \mid \mathcal{M}_{B_6} (|\nabla^s u|^2)(x) > N_1^2 \right \} \right | < \varepsilon |B_r|,
	\end{align*}
	which contradicts $(\ref{Lvv})$.
\end{proof}

The following decay of level sets will be the main key to proving the higher integrability of $\nabla^s u$.
\begin{lemma} \label{aplvi}
	Let $N_1=N_1(n,s,\lambda) > 1$ be given by Corollary $\ref{applic}$ and assume that $A$ satisfies the condition (\ref{contkernel}) in $B_6$.
	Moreover, let $k \in \mathbb{N}$, $\varepsilon \in (0,1)$, set $\varepsilon_1 := 10^n \varepsilon$ 
	and consider the corresponding $\delta = \delta(\varepsilon,n,s,\lambda)>0$ given by Corollary $\ref{applic}$.
	Then for any weak solution $ u \in H^{s,2}(B_6 | \mathbb{R}^n)$ 
	of the equation 
	$$ L_A^\Phi u =  L g + f \text { in } B_6$$
	with
	\begin{equation} \label{air}
	\left | \left \{ x \in B_1 \mid \mathcal{M}_{B_6} (|\nabla^s u|^2)(x) > N_1^2 \right \} \right | < \varepsilon |B_1| ,
	\end{equation}
	we have 
	\begingroup
	\allowdisplaybreaks
	\begin{align*}
	& \left | \left \{ x \in B_1 \mid \mathcal{M}_{B_6} (|\nabla^s u|^2)(x) > N_1^{2k} \right \} \right | \\
	\leq & \sum_{j=1}^k \varepsilon_1^j \left | \left \{ x \in B_1 \mid \mathcal{M}_{B_6} \left (|f|^{2} +  |\nabla^s g|^2 \right )(x) > \delta^2 N_1^{2(k-j)} \right \} \right | \\ 
	& + \varepsilon_1^k \left | \left \{ x \in B_1 \mid \mathcal{M}_{B_6} (|\nabla^s u|^2)(x) > 1 \right \} \right | .
	\end{align*}
	\endgroup
\end{lemma}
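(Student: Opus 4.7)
The plan is to argue by induction on $k$, using Lemma \ref{modVitali} and Corollary \ref{applic} to establish the base case $k=1$, and exploiting a scaling of the equation to pass from $k$ to $k+1$.

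\textbf{Base case.} For $k=1$ I would set
\[
E := \{x \in B_1 \mid \mathcal{M}_{B_6}(|\nabla^s u|^2)(x) > N_1^2\}, \quad F := F_1 \cup F_2,
\]
where $F_1 := \{x \in B_1 \mid \mathcal{M}_{B_6}(|\nabla^s u|^2)(x) > 1\}$ and $F_2 := \{x \in B_1 \mid \mathcal{M}_{B_6}(|f|^2 + |\nabla^s g|^2)(x) > \delta^2\}$. Then $E \subset F \subset B_1$ and by hypothesis $|E| < \varepsilon |B_1|$. For any $z \in B_1$ and $r \in (0,1)$ with $|E \cap B_r(z)| \geq \varepsilon |B_r(z)|$, Corollary \ref{applic} gives exactly the inclusion $B_r(z) \cap B_1 \subset F$. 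Hence Lemma \ref{modVitali} yields $|E| \leq 10^n \varepsilon |F| = \varepsilon_1 |F| \leq \varepsilon_1 |F_1| + \varepsilon_1 |F_2|$, which is the $k=1$ statement.

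\textbf{Inductive step.} Suppose the claim holds for $k$, and let $u$ satisfy the hypotheses. The key observation is that $u_* := u/N_1$ solves $L_A^{\tilde\Phi} u_* = L g_* + f_*$ in $B_6$, where $g_* := g/N_1$, $f_* := f/N_1$, and $\tilde\Phi(t) := \Phi(N_1 t)/N_1$; a direct check shows $\tilde\Phi(0)=0$ and that $\tilde\Phi$ still satisfies (\ref{PhiLipschitz}) and (\ref{PhiMonotone}) with the \emph{same} constant $\lambda$, so the induction hypothesis is available for $u_*$ with the same $N_1$ and $\delta$. Since $\mathcal{M}_{B_6}(|\nabla^s u_*|^2) = N_1^{-2}\mathcal{M}_{B_6}(|\nabla^s u|^2)$, the condition (\ref{air}) for $u_*$ follows from (and is in fact weaker than) the one for $u$. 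Applying the induction hypothesis to $u_*$ and then translating the resulting level sets back via the identities
\[
\{\mathcal{M}_{B_6}(|\nabla^s u_*|^2) > N_1^{2k}\} = \{\mathcal{M}_{B_6}(|\nabla^s u|^2) > N_1^{2(k+1)}\},
\]
\[
\{\mathcal{M}_{B_6}(|f_*|^2 + |\nabla^s g_*|^2) > \delta^2 N_1^{2(k-j)}\} = \{\mathcal{M}_{B_6}(|f|^2 + |\nabla^s g|^2) > \delta^2 N_1^{2(k+1-j)}\},
\]
produces the desired bound for $k+1$ except that the tail term is $\varepsilon_1^k |\{\mathcal{M}_{B_6}(|\nabla^s u|^2) > N_1^2\}|$ rather than $\varepsilon_1^{k+1}|\{\mathcal{M}_{B_6}(|\nabla^s u|^2) > 1\}|$. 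To conclude, I would apply the already established base case to $u$ once more to bound this remaining term, which upgrades the factor $\varepsilon_1^k$ to $\varepsilon_1^{k+1}$ and contributes an additional $\varepsilon_1^{k+1}|F_2|$ corresponding precisely to the missing $j = k+1$ term in the sum.

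\textbf{Main obstacle.} The only delicate point is verifying that the rescaled nonlinearity $\tilde\Phi(t) = \Phi(N_1 t)/N_1$ satisfies (\ref{PhiLipschitz}) and (\ref{PhiMonotone}) with the \emph{same} $\lambda$; the quadratic scaling built into (\ref{PhiMonotone}) is exactly what makes this work, so that the constants $N_1, \delta$ produced by Corollary \ref{applic} remain valid at every induction level. Once this scaling invariance is noted, everything else is a routine bookkeeping of the indices in the geometric decay.
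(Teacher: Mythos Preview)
Your proposal is correct and follows essentially the same approach as the paper: the base case via Lemma \ref{modVitali} combined with Corollary \ref{applic}, and the inductive step by rescaling $u \mapsto u/N_1$ with the modified nonlinearity $\tilde\Phi(t)=\Phi(N_1 t)/N_1$, then applying the base case once more to upgrade the tail term. Your identification of the scaling invariance of the structural conditions on $\Phi$ as the crucial point is exactly right.
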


\begin{proof}
	We proof this Lemma by induction on $k$. 
	In view of $(\ref{air})$ and Corollary $\ref{applic}$, the case $k=1$ is a direct consequence of Lemma $\ref{modVitali}$
	applied to the sets 
	$$ E := \left \{ x \in B_1 \mid \mathcal{M}_{B_6} (|\nabla^s u|^2)(x) > N_1^2 \right \}  $$
	and
	\begin{align*}
	F:= \left \{ x \in B_1 \mid \mathcal{M}_{B_6} (|\nabla^s u|^2)(x) > 1 \right \} \cup \left \{ x \in B_1 \mid \mathcal{M}_{B_6} \left (|f|^{2} +  |\nabla^s g|^2 \right )(x) > \delta^2 \right \} .
	\end{align*}
	Next, assume that the conclusion is valid for some $k \in \mathbb{N}$.
	Define $\widehat \Phi(t):=\Phi(N_1 t)/N_1$, $\widehat u := u/N_1$, $\widehat g := g/N_1$ and $\widehat f := f/N_1$. Then $\widehat \Phi$ clearly satisfies the conditions (\ref{PhiLipschitz}) and (\ref{PhiMonotone}) with respect to $\lambda$ and we have  
	$$ L_A^{\widehat \Phi} \widehat u =  L \widehat g + \widehat f \text { weakly in } B_6.$$
	Moreover, since $N_1 > 1$, we have 
	\begin{align*}
	& \left | \left \{ x \in B_1 \mid \mathcal{M}_{B_6} (|\nabla^s \widehat u|^2)(x) > N_1^2 \right \} \right | \\
	= & \left | \left \{ x \in B_1 \mid \mathcal{M}_{B_6} (|\nabla^s u|^2)(x) > N_1^4 \right \} \right | \\
	\leq & \left | \left \{ x \in B_1 \mid \mathcal{M}_{B_6} (|\nabla^s u|^2)(x) > N_1^2 \right \} \right |< \varepsilon |B_1|.
	\end{align*}
	Thus, using the induction assumption yields 
	\begin{align*}
	& \left | \left \{ x \in B_1 \mid \mathcal{M}_{B_6} (|\nabla^s u|^2)(x) > N_1^{2(k+1)} \right \} \right | \\
	= & \left | \left \{ x \in B_1 \mid \mathcal{M}_{B_6} (|\nabla^s \widehat u|^2)(x) > N_1^{2k} \right \} \right | \\
	\leq & \sum_{j=1}^k \varepsilon_1^j \left | \left \{ x \in B_1 \mid \mathcal{M}_{B_6} \left (|\widehat f|^{2} +  |\nabla^s \widehat g|^2 \right )(x) > \delta^2 N_1^{2(k-j)} \right \} \right | \\
	& + \varepsilon_1^k \left | \left \{ x \in B_1 \mid \mathcal{M}_{B_6} (|\nabla^s \widehat u|^2)(x) > 1 \right \} \right | \\
	= & \sum_{j=1}^k \varepsilon_1^j \left | \left \{ x \in B_1 \mid \mathcal{M}_{B_6} \left (|f|^{2} +  |\nabla^s g|^2 \right )(x) > \delta^2 N_1^{2(k+1-j)} \right \} \right | \\
	& + \varepsilon_1^k \left | \left \{ x \in B_1 \mid \mathcal{M}_{B_6} (|\nabla^s u|^2)(x) > N_1^2 \right \} \right |.
	\end{align*}
	Moreover, by using the case $k=1$ we obtain
	\begin{align*}
	= & \sum_{j=1}^k \varepsilon_1^j \left | \left \{ x \in B_1 \mid \mathcal{M}_{B_6} \left (|f|^{2} +  |\nabla^s g|^2 \right )(x) > \delta^2 N_1^{2(k+1-j)} \right \} \right | \\
	& + \varepsilon_1^k \left | \left \{ x \in B_1 \mid \mathcal{M}_{B_6} (|\nabla^s u|^2)(x) > N_1^2 \right \} \right | \\
	\leq & \sum_{j=1}^k \varepsilon_1^j \left | \left \{ x \in B_1 \mid \mathcal{M}_{B_6} \left (|f|^{2} +  |\nabla^s g|^2 \right )(x) > \delta^2 N_1^{2(k+1-j)} \right \} \right | \\
	& + \varepsilon_1^k \Bigg ( \varepsilon_1 \left | \left \{ x \in B_1 \mid \mathcal{M}_{B_6} \left (|f|^{2} +  |\nabla^s g|^2 \right )(x) > \delta^2 \right \} \right | \\
	& + \varepsilon_1 \left | \left \{ x \in B_1 \mid \mathcal{M}_{B_6} (|\nabla^s u|^2)(x) > 1 \right \} \right | \Big ) \\
	= & \sum_{j=1}^{k+1} \varepsilon_1^j \Bigg ( \left | \left \{ x \in B_1 \mid \mathcal{M}_{B_6} \left (|f|^{2} +  |\nabla^s g|^2 \right )(x) > \delta^2 N_1^{2(k+1-j)} \right \} \right | \\
	& + \varepsilon_1^{k+1} \left | \left \{ x \in B_1 \mid \mathcal{M}_{B_6} (|\nabla^s u|^2)(x) > 1 \right \} \right | ,
	\end{align*}
	so that by combining the last two displays we see that the conclusion is valid for $k+1$, which completes the proof.
\end{proof}

We are now set to prove the desired higher integrability of $\nabla^s u$ in the case of balls. The main tools are Lemma \ref{aplvi}, Lemma \ref{Caff} and Proposition \ref{Maxfun}.
\begin{theorem} \label{mainint1}
	Let $2<p< \infty$. Moreover, let $g:\mathbb{R}^n \times \mathbb{R}^n \to \mathbb{R}$ be a measurable symmetric function with $\nabla^s g \in L^p(B_6)$ and $f \in L^p(B_6)$. If $A \in \mathcal{L}_0(\lambda)$ satisfies the condition (\ref{contkernel}) in $B_6$ and if $\Phi$ satisfies and the assumptions (\ref{PhiLipschitz}) and (\ref{PhiMonotone}) with respect to $\lambda$, then for any weak solution $u \in H^{s,2}(B_6 | \mathbb{R}^n)$ 
	of the equation 
	$$ L_A^\Phi u =  L g  + f \text{ in } B_6,$$ 
	we have $\nabla^s u \in L^p(B_1)$. 
	Moreover, there exists a constant $C= C(p,n,s,\lambda) >0$ such that
	\begin{equation} \label{Lpest98}
	||\nabla^s u||_{L^p(B_1)} \leq C \left (||f||_{L^p(B_6)} + ||\nabla^s {g}||_{L^p(B_6)} + ||\nabla^s u||_{L^2(B_6)} \right ). 
	\end{equation}
\end{theorem}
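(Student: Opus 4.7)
The plan is to combine the level-set decay from Lemma \ref{aplvi} with the equivalent characterization of $L^p$ in Lemma \ref{Caff} and the strong $(p/2,p/2)$ boundedness of the Hardy--Littlewood maximal function. The first step is to fix $\varepsilon\in(0,1)$ small enough that $\varepsilon_1 N_1^p = 10^n\varepsilon N_1^p < 1/2$, where $N_1=N_1(n,s,\lambda)$ comes from Corollary \ref{applic}. This choice depends only on $n,s,\lambda,p$ and determines a corresponding $\delta=\delta(\varepsilon,n,s,\lambda)>0$.

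Next I would normalize by scaling. For $\mu>0$ set
\[
K := \eta^{-1}\bigl(\|f\|_{L^p(B_6)} + \|\nabla^s g\|_{L^p(B_6)} + \|\nabla^s u\|_{L^2(B_6)} + \mu\bigr),
\]
where $\eta>0$ is to be chosen, and put $\tilde u := u/K$, $\tilde g := g/K$, $\tilde f := f/K$, $\tilde\Phi(t) := \Phi(Kt)/K$. Dividing the equation by $K$ shows $\tilde u$ weakly solves $L_A^{\tilde\Phi}\tilde u = L\tilde g + \tilde f$ in $B_6$, while a direct check shows $\tilde\Phi$ still satisfies (\ref{PhiLipschitz}) and (\ref{PhiMonotone}) with the same $\lambda$. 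Choosing $\eta>0$ small enough in terms of $n$, $\varepsilon$ and $N_1$, the weak $(1,1)$ estimate of Proposition \ref{Maxfun} yields
\[
|\{x\in B_1 : \mathcal{M}_{B_6}(|\nabla^s\tilde u|^2)(x) > N_1^2\}| \leq \tfrac{C(n)}{N_1^2}\|\nabla^s\tilde u\|_{L^2(B_6)}^2 < \varepsilon|B_1|,
\]
so hypothesis (\ref{air}) of Lemma \ref{aplvi} is satisfied for $\tilde u$.

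Now I would set
\[
S := \sum_{k=1}^\infty N_1^{kp}\,\bigl|\{x\in B_1 : \mathcal{M}_{B_6}(|\nabla^s\tilde u|^2)(x) > N_1^{2k}\}\bigr|,
\]
apply Lemma \ref{aplvi} inside each term, and interchange the order of summation via the substitution $i = k-j$. This gives
\[
S \leq \Bigl(\sum_{j=1}^\infty (\varepsilon_1 N_1^p)^j\Bigr)\sum_{i=0}^\infty N_1^{ip}\bigl|\{\mathcal{M}_{B_6}(|\tilde f|^2+|\nabla^s\tilde g|^2) > \delta^2 N_1^{2i}\}\bigr| + \Bigl(\sum_{k=1}^\infty (\varepsilon_1 N_1^p)^k\Bigr)|B_1|,
\]
where both geometric series converge by the choice of $\varepsilon$. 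The $i$-sum is controlled, via Lemma \ref{Caff} with $\tau=\delta^2$, $\beta=N_1^2$ and exponent $p/2$, by $\|\mathcal{M}_{B_6}(|\tilde f|^2+|\nabla^s\tilde g|^2)\|_{L^{p/2}(B_1)}^{p/2}$, which in turn is dominated by $\|\tilde f\|_{L^p(B_6)}^p + \|\nabla^s\tilde g\|_{L^p(B_6)}^p$ using the strong $(p/2,p/2)$ estimate of Proposition \ref{Maxfun}. The reverse direction of Lemma \ref{Caff} then yields $\|\mathcal{M}_{B_6}(|\nabla^s\tilde u|^2)\|_{L^{p/2}(B_1)}^{p/2} \leq C(|B_1|+S)$, and since $\|\nabla^s\tilde u\|_{L^p(B_1)}^p = \||\nabla^s\tilde u|^2\|_{L^{p/2}(B_1)}^{p/2}$ is bounded by the maximal function norm, I obtain
\[
\|\nabla^s\tilde u\|_{L^p(B_1)} \leq C\bigl(\|\tilde f\|_{L^p(B_6)} + \|\nabla^s\tilde g\|_{L^p(B_6)} + 1\bigr) \leq C(p,n,s,\lambda).
\]
Multiplying through by $K$ and letting $\mu\to 0$ yields (\ref{Lpest98}).

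The main obstacle I expect is the bookkeeping around the double sum: specifically, ensuring that after swapping summation indices the factor picked up is exactly $(\varepsilon_1 N_1^p)^j$ so that the convergence condition becomes the sharp $\varepsilon_1 N_1^p < 1$, and matching this with the normalization of $\varepsilon$ versus the constant $N_1$ from Corollary \ref{applic}. The rescaling step is routine once one verifies that $\tilde\Phi$ inherits (\ref{PhiLipschitz})--(\ref{PhiMonotone}) with the same $\lambda$, which I would do by the direct computations $|\tilde\Phi(t)-\tilde\Phi(t')| = K^{-1}|\Phi(Kt)-\Phi(Kt')|$ and $(\tilde\Phi(t)-\tilde\Phi(t'))(t-t') = K^{-2}(\Phi(Kt)-\Phi(Kt'))(Kt-Kt')$.
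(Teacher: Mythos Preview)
Your proposal is correct and follows essentially the same approach as the paper: choose $\varepsilon$ so that $10^n\varepsilon N_1^p<1$, normalize the equation so that the weak $(1,1)$ estimate forces hypothesis (\ref{air}), then combine Lemma \ref{aplvi}, the geometric series, Lemma \ref{Caff} with $\beta=N_1^2$, and the strong $(p/2,p/2)$ maximal estimate. The only cosmetic difference is your normalization constant $K$, which includes all three right-hand side terms plus a regularizing $\mu$, whereas the paper scales by $\|\nabla^s u\|_{L^2(B_6)}/\gamma$ alone (after disposing of the trivial case $\nabla^s u\equiv 0$); both achieve the same smallness for (\ref{air}) and lead to the identical estimate after undoing the scaling.
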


\begin{proof}
	Fix $p>2$ and let $N_1=N_1(n,s,\lambda) > 1$ be given by Lemma $\ref{aplvi}$. Moreover, select $\varepsilon \in (0,1)$ small enough such that 
	\begin{equation} \label{dwn}
	N_1^p 10^n \varepsilon \leq \frac{1}{2}.
	\end{equation}
	Consider also the corresponding $\delta = \delta(\varepsilon,n,s,\lambda)>0$ given by Corollary $\ref{applic}$.
	If $\nabla^s u=0$ a.e$.$ in $B_6$, then the assertion is trivially satisfied, so that we can assume $||\nabla^s u||_{L^2(B_6)} > 0$.
	Next, we let $\gamma >0$ to be chosen independently of $u$, $g$ and $f$ and define
	$$u_\gamma := \frac{u}{M_\gamma}, \text{ } g_\gamma := \frac{g}{M_\gamma} \text{ and } f_\gamma := \frac{f}{M_\gamma},$$ 
	where $M_\gamma:=||\nabla^s u||_{L^2(B_6)}/ \gamma$.
	In addition, we define $\Phi_\gamma(t):=\frac{1}{M_\gamma} \Phi(M_\gamma t)$ and note that $\Phi_\gamma$ satisfies (\ref{PhiLipschitz}) and (\ref{PhiMonotone}) with respect to $\lambda$ and that we have 
	$$ L_A^{\Phi_\gamma} u_\gamma =  L g_\gamma + f_\gamma \text{ weakly in } B_6.$$
	Moreover, we have
	$$ \int_{B_6} |\nabla^s u_\gamma|^2 dx = \gamma^2.$$
	By combining this observation with the weak $1$-$1$ estimate from Proposition \ref{Maxfun}, it follows that there is a constant $C_1=C_1(n)>0$ such that 
	$$\left | \left \{ x \in B_1 \mid \mathcal{M}_{B_6} (|\nabla^s u_\gamma|^2)(x) > N_1^2 \right \} \right | \leq \frac{C_1}{N_1^2} \int_{B_6} |\nabla^s u_\gamma|^2 dx 
	= \frac{C_1 \gamma^2}{N_1^2} < \varepsilon |B_1|, $$
	where the last inequality is obtained by choosing $\gamma$ small enough.
	Therefore, all assumptions made in Lemma $\ref{aplvi}$ are satisfied by $u_\gamma$.
	Furthermore, by Proposition \ref{Maxfun} and Lemma $\ref{Caff}$ applied with $\tau=1$, $\beta=N_1^2$ and with $p$ replaced by $p/2$, we deduce that
	\begin{align*}
	||\nabla^s u_\gamma||^p_{L^p(B_1)} & \leq ||\mathcal{M}_{B_6}(|\nabla^s u_\gamma|^2)||_{L^{p/2}(B_1)}^{p/2} \\
	& \leq C_2 \left ( \sum_{k=1}^\infty N_1^{pk} \left | \left \{ x \in B_1 \mid \mathcal{M}_{B_6} (|\nabla^s u_\gamma|^2)(x) > N_1^{2k} \right \} \right | 
	+ |B_1| \right ),
	\end{align*}
	where $C_2=C_2(n,s,p,\lambda)>0$.
	Setting $\varepsilon_1 := 10^n \varepsilon$, by $(\ref{dwn})$ we see that 
	\begin{equation} \label{Slh}
	\sum_{j=1}^\infty (N_1^p \varepsilon_1)^{j} \leq \sum_{j=1}^\infty \left ( \frac{1}{2} \right )^{j} = 1.
	\end{equation}
	Using Lemma $\ref{aplvi}$, the Cauchy product and $(\ref{Slh})$, we compute 
	\begin{align*} 
	& \sum_{k=1}^\infty N_1^{pk} \left | \left \{ x \in B_1 \mid \mathcal{M}_{B_6} (|\nabla^s u_\gamma|^2)(x) > N_1^{2k} \right \} \right | \\
	\leq & \sum_{k=1}^\infty N_1^{pk} \Bigg ( \sum_{j=1}^k \varepsilon_1^j \left 
	| \left \{ x \in B_1 \mid \mathcal{M}_{B_6} \left (|f_\gamma|^{2}+ |\nabla^s g_\gamma|^2 \right )(x)> \delta^2 N_1^{2(k-j)} \right \} \right | \\
	& + \varepsilon_1^k \left | \left \{ x \in B_1 \mid \mathcal{M}_{B_6} (|\nabla^s u_\gamma|^2)(x) > 1 \right \} \right | \Bigg ) \\
	= & \left ( \sum_{k=0}^\infty N_1^{pk} \left | \left \{ x \in B_1 \mid \mathcal{M}_{B_6} \left (|f_\gamma|^{2}+ |\nabla^s g_\gamma|^2 \right )(x) > \delta^2 N_1^{2k} \right \} \right | \right) 
	\left ( \sum_{j=1}^\infty (N_1^p \varepsilon_1)^{j} \right ) \\
	& + \left ( \sum_{k=1}^\infty (N_1^p \varepsilon_1)^{k} \right ) \left | \left \{ x \in B_1 \mid \mathcal{M}_{B_6} (|\nabla^s u_\gamma|^2)(x) > 1 \right \} \right | \\
	\leq & \sum_{k=1}^\infty N_1^{pk} \left | \left \{ x \in B_1 \mid \mathcal{M}_{B_6} \left (|f_\gamma|^{2}+ |\nabla^s g_\gamma|^2 \right )(x) > \delta^2 N_1^{2k} \right \} \right | + |B_1|.
	\end{align*}
	Next, by combining the previous two displays with Lemma $\ref{Caff}$ applied with with $\tau=\delta^2$, $\beta=N_1^2$ and with $p$ replaced by $p/2$, and also taking into account the strong $p$-$p$ estimates from Proposition \ref{Maxfun}, we deduce that
	\begin{align*}
	& ||\nabla^s u_\gamma||^p_{L^p(B_1)} \\ \leq & C_2 \left ( \sum_{k=1}^\infty N_1^{pk} \left | \left \{ x \in B_1 \mid \mathcal{M}_{B_6} \left (|f_\gamma|^{2} +  |\nabla^s g_\gamma|^2 \right )(x) > \delta^2 N_1^{2k} \right \} \right | + 2|B_1| \right ) \\
	\leq & C_3 \left (||\mathcal{M}_{B_6} \left (|f_\gamma|^{2} +  |\nabla^s g_\gamma|^2 \right )||_{L^{p/2}(B_6)}^{p/2}+1\right ) \\
	\leq & C_4 \left (||f_\gamma||_{L^p(B_6)}^{p} + || \nabla^s g_\gamma||_{L^p(B_6)}^{p}+1 \right ),
	\end{align*}
	where $C_3=C_3(n,s,p,\lambda)>0$ and $C_4=C_4(n,p)>0$.
	It follows that 
	$$ ||\nabla^s u_\gamma||_{L^p(B_1)}
	\leq C_4^{1/p} \left ( ||f_\gamma||_{L^p(B_6)} + || \nabla^s g_\gamma||_{L^p(B_6)} +1 \right ), $$
	so that
	\begin{align*}
	||\nabla^s u||_{L^p(B_1)} & \leq C_4^{1/p} \left ( ||f||_{L^p(B_6)} + || \nabla^s {g}||_{L^p(B_6)} + \frac{||\nabla^s u||_{L^2(B_6)}}{\gamma} \right ) \\
	& \leq C \left ( ||f||_{L^p(B_6)} + || \nabla^s {g}||_{L^p(B_6)} + ||\nabla^s u||_{L^2(B_6)} \right ),
	\end{align*}
	which proves the estimate (\ref{Lpest98}).
\end{proof}

\section{Proof of the main result}
We are now set to prove our main result by using scaling and covering argments.
\begin{proof}[Proof of Theorem \ref{mainint5}]
	Fix $p \in (2,\infty)$. We first assume that $f \in L^p_{loc}(\Omega)$.
	Fix relatively compact bounded open sets $U \Subset V \Subset \Omega$. Moreover, fix a Lipschitz domain $U_\star$ such that $U \Subset U_\star \Subset V$. 
	For any $z \in V$, fix some small enough $r_z \in (0,1)$ such that $B_{6r_z}(z) \Subset V$.
	Define 
	\begin{align*}
	A_z (x,y):= A \left (r_z x+z, r_z y+z \right ), \quad u_z (x) := r^{-s}_z u \left (r_z x +z \right ), \\
	{g}_z (x) := r^{-s}_z g \left (r_z x +z,r_z y +z \right ), \quad f_z (x) := r^s_z f \left (r_z x +z \right )
	\end{align*} 
	and note that for any $z \in V$, 
	$A_z$ belongs to the class $\mathcal{L}_0(\lambda)$ and satisfies the condition (\ref{contkernel}) in $B_6$, and that $u_z \in H^{s,2}(B_6)$ satisfies 
	$$ L_{A_z}^\Phi u_z = L {g}_z + f_z \text { weakly in } B_6.$$ 
	By Theorem $\ref{mainint1}$, we obtain the estimate 
	\begin{align*}
	& ||\nabla^s u||_{L^p \left (B_{r_z}(z) \right)} = r_z^{\frac{n}{p}} ||\nabla^s u_z||_{L^q(B_1)} \\
	\leq & r_z^{\frac{n}{p}} C_1 
	\left (||f_z||_{L^p(B_6)} + || \nabla^s {g}_z||_{L^p(B_6)} + ||\nabla^s u_z||_{L^2(B_6)} \right ) \\
	= & C_1 \left (r^s_z ||f||_{L^p(B_{6r_z}(z))} + || \nabla^s g||_{L^p(B_{6r_z}(z))} + r_z^{\frac{n}{p}-\frac{n}{2}} ||\nabla^s u||_{L^2(B_{6r_z}(z))} \right ) \\
	\leq & C_1 \max \{1, r_z^{\frac{n}{p}-\frac{n}{2}} \} \left (||f||_{L^q(B_{6r_z}(z))} + || \nabla^s g||_{L^p(B_{6r_z}(z))} + ||\nabla^s u||_{L^2(B_{6r_z}(z))} \right ),
	\end{align*}
	where $C_1=C_1(p,n,s,\lambda) >0$.
	Since $\left \{ B_{r_z}(z) \right \}_{z \in \overline U_\star} $ is an open covering of $\overline U_\star$ and $\overline U_\star$ is compact, there is a finite subcover 
	$ \left \{ B_{r_{z_i}}(z_i) \right \}_{i =1}^k$ of $\overline U_\star$ and hence of $U_\star$. 
	Let $\{\phi_i \}_{i=1}^k$ be a partition of unity subordinate to the covering $\left \{ B_{r_{z_i}}(z_i) \right \}_{i =1}^k$ of $\overline U_\star$, that is, the $\phi_i$ are non-negative functions on $\mathbb{R}^n$, we have 
	$ \phi_i \in C_0^\infty(B_{r_{x_i}}(x_i))$ for all $i=1,...,k$, $\sum_{i=1}^k \phi_j \equiv 1$ in an open neighbourhood of $\overline U_\star$ and $\sum_{i=1}^k \phi_j \leq 1$ in $\mathbb{R}^n$.
	Setting $C_2:=C_1 \max \{1, \max_{i=1,...,k} r_{z_i}^{\frac{n}{p}-\frac{n}{2}} \}$ and summing the above estimates over $i=1,...,k$, we conclude 
	\begin{align*} 
	||\nabla^s u||_{L^p(U_\star)} & =||\sum_{i=1}^k |\nabla^s u| \phi_i ||_{L^q(U_\star)} \\
	& \leq \sum_{i=1}^k |||\nabla^s u| \phi_i||_{L^p(B_{r_{z_i}}(z_i))} \\
	& \leq \sum_{i=1}^k ||\nabla^s u||_{L^p(B_{r_{z_i}(z_i) })} \\
	& \leq \sum_{i=1}^k C_2 \left (||f||_{L^p(B_{6r_z}(z))} + || \nabla^s g||_{L^p(B_{6r_z}(z))} + ||\nabla^s u||_{L^2(B_{6r_z}(z))} \right ) \\
	%& \leq \sum_{i=1}^k C_2 \left (||f||_{L^p(V)} + || \nabla^s g||_{L^p(V)} + ||\nabla^s u||_{L^2(V)} \right ) \\
	& \leq C_2 k \left (||f||_{L^p(V)} + || \nabla^s g||_{L^p(V)} + ||\nabla^s u||_{L^2(V)} \right ),
	\end{align*} 
	so that for $C_3=C_2 k$ we have
	\begin{equation} \label{loworderterm1}
	||\nabla^s u||_{L^p(U_\star)} \leq C_3 \left (||f ||_{L^p(V)} +  ||\nabla^s {g}||_{L^p(V)} + ||\nabla^s u||_{L^2(V)} \right ). 
	\end{equation}
	Next, consider the general case when $f \in L^{p_\star}_{loc}(\Omega)$, where $p_\star = \max \left \{\frac{pn}{n+ps},2 \right \}$. Consider the function $f_{V}:\mathbb{R}^n \to \mathbb{R}$ defined by
	$$ f_{\Omega}(x) := \begin{cases} 
	f(x), & \text{if } x \in V \\
	0, & \text{if } x \in \mathbb{R}^n \setminus V 
	\end{cases}$$ 
	and note that $f_{V} \in L^{p_\star}(\mathbb{R}^n) \cap L^2(\mathbb{R}^n)$. 
	By \cite[Proposition 5.1]{Me}, there exists a unique weak solution $h \in H^{s,2}(\mathbb{R}^n) \subset H^{s,2}_{loc}(\Omega | \mathbb{R}^n)$ of the equation
	\begin{equation} \label{helpeq}
	(-\Delta)^s h + h = f_V \quad \text{in } \mathbb{R}^n,
	\end{equation}
	where
	$$ (-\Delta)^s h(x) = C_{n,s} \int_{\mathbb{R}^n} \frac{h(x)-h(y)}{|x-y|^{n+2s}}dy$$
	is the fractional Laplacian of $h$.
	In view of Proposition \ref{altcharBessel}, Theorem \ref{BesselEmbedding}, using the classical $H^{2s,p_\star}$ estimates for the fractional Laplacian on the whole space $\mathbb{R}^n$ (cf. for example \cite[Lemma 3.5]{KassMengScott}) and setting $\widetilde h(x,y) := C_{n,s}(h(x)-h(y))$, we obtain
	\begin{equation} \label{gf}
	\begin{aligned}
	||h||_{L^p(\mathbb{R}^n)} + ||\nabla^s \widetilde h||_{L^p(\mathbb{R}^n)} = & C_4 (||h||_{L^p(\mathbb{R}^n)} + ||\nabla^s h||_{L^p(\mathbb{R}^n)}) \\ \leq & C_5 ||h||_{H^{s,p}(\mathbb{R}^n)} \\ \leq & C_6 ||h||_{H^{2s,p_\star}(\mathbb{R}^n)} \\ \leq & C_7 ||f_V||_{L^{p_\star}(\mathbb{R}^n)} = C_6 ||f||_{L^{p_\star}(V)},
	\end{aligned}
	\end{equation}
	where all constants depend only on $n,s$ and $p$.
	Furthemore, $u$ is a weak solution of  
	$$ L_A^\Phi u = L \widetilde g + h \text{ in } V,$$
	where $$\widetilde g(x,y):= g(x,y)+\widetilde h(x,y).$$ Therefore, by combining the estimates (\ref{loworderterm1}) and (\ref{gf}), we arrive at
	\begin{align*}
	||\nabla^s u||_{L^p(U_\star)} \leq & C_3 \left (||h||_{L^p(V)} + ||\nabla^s \widetilde g||_{L^{p}(V)} +  ||\nabla^s u||_{L^2(V)} \right ) \\
	\leq & C_3 \left (||h ||_{L^{p}(V)} +  ||\nabla^s \widetilde h||_{L^p(V)} +  ||\nabla^s {g}||_{L^p(V)} + ||\nabla^s u||_{L^2(V)} \right ) \\
	\leq & C \left (||f ||_{L^{p_\star}(V)} +  ||\nabla^s {g}||_{L^p(V)} + ||\nabla^s u||_{L^2(V)} \right )
	\end{align*}
	for some constant $C=C(p,n,s,\lambda)>0$, which proves the estimate (\ref{Sobest}). In particular, since by assumption we have $f,\nabla^s g \in L^p(V)$, we obtain that $\nabla^s u \in L^p(U_\star)$.
	Let us now prove that $u \in H^{s,p}(U_\star | \mathbb{R}^n)$. 
	For any $r \in [2,p]$, define
	$$ r^\star := \begin{cases} 
	\min \{\frac{rn}{n-rs},p\}, & \text{if } rs < n \\
	p, & \text{if } rs \geq n .
	\end{cases}$$ 
	By Proposition \ref{altcharBessel} and Theorem \ref{BesselEmbedding}, for any $r \geq 2$ we have
	$$ H^{s,r}(U_\star | \mathbb{R}^n) \subset H^{s,r}(U_\star) \hookrightarrow L^{r^\star}(U_\star).$$
	Since $u \in H^{s,2}(U_\star|\mathbb{R}^n)$, we have $u \in L^{2^\star}(U_\star)$.
	If $p = 2^\star$, we have $u,\nabla^s u \in L^{p}(U_\star)$ and therefore $u \in H^{s,p}(U_\star | \mathbb{R}^n).$ If $p> 2^\star$, then we have $u,\nabla^s u \in L^{2^\star}(U_\star)$, so that $u \in H^{s,2^\star}(U_\star | \mathbb{R}^n).$ We therefore arrive at $u \in L^{{2^\star}^\star}(U_\star)$. If ${2^\star}^\star = p$, then we have $u,\nabla^s u \in L^{p}(U_\star)$ and therefore $u \in H^{s,p}(U_\star | \mathbb{R}^n).$ If ${2^\star}^\star > p$, then iterating the above procedure also yields $u \in H^{s,p}(U_\star | \mathbb{R}^n)$ and therefore $u \in H^{s,p}(U | \mathbb{R}^n)$ at some point. Since $U$ is an arbitrary relatively compact open subset of $\Omega$, we conclude that $u \in H^{s,p}_{loc}(\Omega | \mathbb{R}^n)$. 
	This finishes the proof.
\end{proof}

\bibliographystyle{amsalpha}

\begin{thebibliography}{A}
	
	
	\bibitem{AM}
	E. Acerbi and G. Mingione, \emph{Gradient estimates for a class of parabolic systems}, Duke Math. J. \textbf{136} (2007), no.~2, 285--320.
	
	\bibitem{Warma}
	U. Biccari, M. Warma and E. Zuazua, \emph{Local elliptic regularity for the {D}irichlet fractional {L}aplacian}, Adv. Nonlinear Stud. \textbf{17} (2017), no.~2, 387--409.
	
	\bibitem{BL}
	L. Brasco and E. Lindgren, \emph{Higher {S}obolev regularity for the fractional {$p$}-{L}aplace
		equation in the superquadratic case}, Adv. Math. \textbf{304} (2017), 300--354.
	
	\bibitem{BLS}
	L. Brasco, E. Lindgren and A. Schikkora, \emph{Higher {H}\"{o}lder regularity for the fractional
		{$p$}-{L}aplacian in the superquadratic case}, Adv. Math. \textbf{338} (2018), 782--846.
	
	\bibitem{ByunLp}
	S. Byun, \emph{Elliptic equations with {BMO} coefficients in {L}ipschitz
		domains}, Trans. Amer. Math. Soc. \textbf{357} (2005), no.~3, 1025--1046.
	
	\bibitem{CaffarelliPeral}
	L. Caffarelli and I.~Peral, \emph{On {$W^{1,p}$} estimates for elliptic
		equations in divergence form}, Comm. Pure Appl. Math. \textbf{51} (1998),
	no.~1, 1--21.
	
	\bibitem{CaffFully}
	L. Caffarelli and X. Cabr\'{e}, \emph{Fully nonlinear elliptic
		equations}, American Mathematical Society Colloquium Publications, vol.~43,
	American Mathematical Society, Providence, RI, 1995.
	
	\bibitem{CSa}
	L. Caffarelli and L. Silvestre, \emph{Regularity results for nonlocal equations by approximation}, Arch. Ration. Mech. Anal. \textbf{200} (2011), no.~1, 59--88.
	
	\bibitem{Cozzi}
	M. Cozzi, \emph{Interior regularity of solutions of non-local equations in
		{S}obolev and {N}ikol'skii spaces}, Ann. Mat. Pura Appl. (4) \textbf{196}
	(2017), no.~2, 555--578.
	
	\bibitem{finnish}
	A. Di~Castro, T. Kuusi and G. Palatucci, \emph{Local behavior of
		fractional {$p$}-minimizers}, Ann. Inst. H. Poincar\'{e} Anal. Non
	Lin\'{e}aire \textbf{33} (2016), no.~5, 1279--1299.
	
		\bibitem{Hitch}
	E. Di~Nezza, G. Palatucci and E. Valdinoci,
	\emph{Hitchhiker's guide to the fractional {S}obolev spaces}, Bull. Sci.
	Math. \textbf{136} (2012), no.~5, 521--573.
	
	\bibitem{DongKim}
	H. Dong and D. Kim, \emph{On {$L_p$}-estimates for a class of non-local elliptic
		equations}, J. Funct. Anal. \textbf{262} (2012), no.~3, 1166--1199.
	
	\bibitem{Fall1}
	M. Fall, \emph{Regularity results for nonlocal equations and applications}, preprint, arXiv: 1806.09139 (2018).
	
	\bibitem{Fall}
	M. Fall, \emph{Regularity estimates for nonlocal {S}chr\"{o}dinger equations}, Discrete Contin. Dyn. Syst.
	\textbf{39} (2019), no.~3, 1405--1456.
	
	\bibitem{Grubb}
	G. Grubb, \emph{Fractional {L}aplacians on domains, a development of
		{H}\"{o}rmander's theory of {$\mu$}-transmission
		pseudodifferential operators}, Adv. Math.
	\textbf{268} (2015), 478--528.
	
	\bibitem{Kassmann}
	M. Kassmann, \emph{A priori estimates for integro-differential operators
		with measurable kernels}, Calc. Var. Partial Differential Equations
	\textbf{34} (2009), no.~1, 1--21.
	
	\bibitem{KassMengScott}
	M. Kassmann, T. Mengesha and J. Scott, \emph{Solvability of
		nonlocal systems related to peridynamics}, Commun. Pure Appl. Anal.
	\textbf{18} (2019), no.~3, 1303--1332.
	
	\bibitem{existence}
	J. Korvenp\"{a}\"{a}, T. Kuusi and G. Palatucci, \emph{The obstacle problem for nonlinear integro-differential
		operators}, Calc. Var. Partial Differential Equations \textbf{55} (2016), no.~3, Art. 63, 29.
	
	\bibitem{mdata}
	T. Kuusi, G. Mingione and Y. Sire, \emph{Nonlocal equations with measure data}, Comm. Math. Phys. \textbf{337} (2015), no.~3, 1317--1368.
	
	\bibitem{selfimpro}
	T. Kuusi, G. Mingione and Y. Sire, \emph{Nonlocal self-improving
		properties}, Anal. PDE \textbf{8} (2015), no.~1, 57--114.
	
	\bibitem{Peral}
	T. Leonori, I. Peral, A. Primo and F. Soria, \emph{Basic
		estimates for solutions of a class of nonlocal elliptic and parabolic
		equations}, Discrete Contin. Dyn. Syst. \textbf{35} (2015), no.~12,
	6031--6068.
	
	\bibitem{MP}
	T. Mengesha, T. Phan, \emph{Weighted {$W^{1,p}$} estimates for weak solutions of
		degenerate elliptic equations with coefficients degenerate in
		one variable}, Nonlinear Anal. \textbf{179} (2019), 184--236.
	
	\bibitem{MSY}
	T. Mengesha, A. Schikorra and S. Yeepo, \emph{Calderon-Zygmund type estimates for nonlocal PDE with H\"older continuous kernel}, preprint, arXiv: 2001.11944 (2020).
	
	\bibitem{Me}
	S. Nowak,
	\emph{$H^{s,p}$ regularity theory for a class of nonlocal elliptic
		equations}, Nonlinear Anal. \textbf{195} (2020), 111730.
	
	\bibitem{MeH}
	S. Nowak,
	\emph{Higher H\"older regularity for nonlocal equations with irregular kernel}, preprint, arXiv: 2006.15906 (2020).
	
	\bibitem{Soler}
	M. Prats and E. Saksman, \emph{A {${\rm T}(1)$} theorem for fractional {S}obolev spaces on
		domains}, J. Geom. Anal. \textbf{27} (2017), no.~3, 2490--2538.
	
	\bibitem{NonlocalGeneral}
	X. Ros-Oton and J. Serra, \emph{Regularity theory for general stable
		operators}, Journal of Differential Equations \textbf{260} (2016), no.~12,
	8675 -- 8715.
	
	\bibitem{Schikorra}
	A. Schikorra, \emph{Nonlinear commutators for the fractional {$p$}-{L}aplacian and
		applications}, Math. Ann. \textbf{366} (2016), no.~1-2, 695--720.
	
	\bibitem{Mon}
	R. Showalter, \emph{ Monotone Operators in Banach Spaces and Nonlinear Partial Differential Equations}, Mathematical Surveys
	and Monographs, \textbf{49}. American Mathematical Society, Providence, RI, 1997.
	
	\bibitem{SteinBessel}
	E. Stein, \emph{The characterization of functions arising as potentials},
	Bull. Amer. Math. Soc. \textbf{67} (1961), 102--104.
	
	\bibitem{SteinSingular}
	E. Stein, \emph{Singular integrals and differentiability properties of
		functions}, Princeton Mathematical Series, No. 30, Princeton University
	Press, Princeton, N.J., 1970.
	
	\bibitem{Silvestre}
	L. Silvestre, \emph{H\"{o}lder estimates for solutions of integro-differential equations
		like the fractional Laplace}, Indiana Univ. Math. J. \textbf{55} (2006), no.~3, 1155--1174.
	
\end{thebibliography}

\end{document}